\documentclass[11pt,a4paper,openright,oneside]{article}
\usepackage{amsfonts, amsmath, amssymb,latexsym,amsthm, mathrsfs, enumerate}
\usepackage[english]{babel}
\usepackage{epsfig}
\usepackage{xcolor,cite}
\usepackage{mathtools}
\usepackage{hyperref}
\usepackage[new]{old-arrows}
\usepackage{stackengine}
\usepackage{scalerel,wasysym}
\usepackage[normalem]{ulem}
\usepackage{stmaryrd}
\usepackage{thm-restate}
\usepackage{enumitem}

\newcommand{\qedblack}{\hfill \ensuremath{\blacksquare}}

\stackMath

\newcommand{\Cay}{\mathrm{Cay}}
\newcommand{\End}{\mathrm{End}}
\newcommand{\Aut}{\mathrm{Aut}}

\parskip=5pt
\parindent=15pt
\usepackage[margin=1.2in]{geometry}
\usepackage{graphicx}
\usepackage{listings}
\usepackage[latin1]{inputenc}

\setcounter{page}{1}

\makeatletter
\providecommand\dotbigcup{\mathpalette\@barred\cdot}
\def\@barred#1#2{\ooalign{\hfil$#1\bigcup$\hfil\cr\hfil$#1#2$\hfil\cr}}

\makeatother

\numberwithin{equation}{section}
\newtheorem{teo}{Theorem}[section]
\newtheorem*{teo*}{Theorem}
\newtheorem*{prop*}{Proposition}
\newtheorem*{corol*}{Corollary}

\newtheorem{lemma}[teo]{Lemma}

\newtheorem{obs}[teo]{Observation}

 \theoremstyle{definition}
 
 \newtheorem{claim}{Claim}[teo]
 \newtheorem{fact}{Fact}

 \newenvironment{customclaim}[1]
  {\innercustomclaim}
  {\endinnercustomclaim}

\newtheorem{remark}[teo]{Remark}


\usepackage{fancyhdr}

\lhead{}
\lfoot{}
\rhead{}
\cfoot{}
\rfoot{\thepage}

\begin{document}
\title{On rigid regular graphs and a problem of Babai and Pultr}
\author{Kolja Knauer\footnote{Departament de Matem\`atiques i Inform\`atica, Universitat de Barcelona, Spain} \footnote{LIS, Aix-Marseille Universit\'e, CNRS, and Universit\'e de Toulon, Marseille, France}\and Gil Puig i Surroca\footnote{Universit\'e Paris-Dauphine, Universit\'e PSL, CNRS, LAMSADE, 75016, Paris, France}}

\maketitle

\begin{abstract}
A graph is \emph{rigid} if it only admits the identity endomorphism. We show that for every $d\ge 3$ there exist infinitely many mutually rigid $d$-regular graphs of arbitrary odd girth $g\geq 7$. Moreover, we determine the minimum order of a rigid $d$-regular graph for every $d\ge 3$. This provides strong positive answers to a question of van der Zypen [\url{https://mathoverflow.net/q/296483}, \url{https://mathoverflow.net/q/321108}]. Further, we use our construction to show that every finite monoid is isomorphic to the endomorphism monoid of a regular graph. This solves a problem of Babai and Pultr [J. Comb.~Theory, Ser.~B, 1980]. 
\end{abstract}

\begin{center} 
\small{\textbf{Mathematics Subject Classifications:} 05C25,20M30,05C75,05C60 
 }
 
\small{\textbf{Keywords:} graph endomorphisms, monoids, rigidity, regular graphs}
\end{center} 

\section{Introduction}

~\hfill 
All graphs, groups, and monoids are considered to be finite. 

Frucht~\cite{Frucht1939} shows that every group is isomorphic to the \emph{automorphism group} $\Aut(G)$ of a graph $G$. Later, Hedrl\'in and Pultr~\cite{HP64,HP65} show  that every monoid $M$ is isomorphic to the \emph{endomorphism monoid} $\End(G)$ of a graph $G$. It is a natural and well-studied question what further assumptions can be made about $G$, while maintaining the above theorems valid, see the introduction of~\cite{knauer2023endomorphismuniversalitysparsegraph}. Here, we focus on $G$ being regular.

It is another classic result of Frucht~\cite{Frucht49} that every group is isomorphic to the automorphism group $\Aut(G)$ of a $3$-regular graph $G$ (even of a Hamiltonian $3$-regular $G$, see~\cite{BFKS79}). Soon after, Sabidussi~\cite{S57} showed that every group is isomorphic to the automorphism group $\Aut(G)$ of a $d$-regular graph $G$ for every $d\geq 3$. In particular, this applies to the trivial group $\{e\}$, i.e., there are $d$-regular \emph{asymmetric} graphs for every $d\geq 3$. Izbicki~\cite{Izb60} shows that for every $d\geq 3$ there exist infinitely many $d$-regular {asymmetric} graphs and Baron and Imrich~\cite{BI69} completely determine the function $\mu(d)$ that assigns to every $d\geq 3$ the smallest order of an asymmetric $d$-regular graph. Namely:

$$\mu(d)=\begin{cases}
12 & \text{if }d=3, \\
10 & \text{if }d=4,5, \\
11 & \text{if }d=6, \\
2\lceil\frac{d}{2}\rceil+4 & \text{if } d>6.\\
\end{cases}$$

Extending the result of Frucht~\cite{Frucht49}, Hell and Ne\v{s}et\v{r}il~\cite{HN73} show that every group is isomorphic to the endomorphism monoid of a $3$-regular graph. In particular, this applies to the trivial group $\{e\}$, i.e., there are $3$-regular \emph{rigid} graphs. Van der Zypen~\cite{vdZ18,vdZ19} asks for which $d$ rigid $d$-regular graphs exist and whether this holds for every large enough $d$. Godsil~\cite{God19} gives a beautiful partial answer to this via asymmetric block graphs of Steiner triple systems, which exist due to Babai~\cite{Bab80}. Using a result of Godsil and Royle~\cite{GR11} this yields (a finite set of strongly) $d$-regular rigid graphs for infinitely many $d$. 

In our first result we give a strong positive answer to van der Zypen's question, determining the order of the smallest instances:

\begin{restatable}{teo}{smallrigid}
\label{thm:smallrigid}
    For every $d\geq 3$ the smallest order of a rigid $d$-regular graph $\nu(d)$ behaves as follows:
    $$\nu(d)=\begin{cases}
14 & \text{if }d=3, \\
10 & \text{if }d=4,5, \\
11 & \text{if }d=6, \\
2\lceil\frac{d}{2}\rceil+4 & \text{if } d>6.\\
\end{cases}$$
    
\end{restatable}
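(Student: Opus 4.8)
The plan is to establish the upper bounds $\nu(d)\le\cdots$ by exhibiting rigid $d$-regular graphs of the stated orders, and to derive the matching lower bounds from the elementary fact that every rigid graph is asymmetric, together with the Baron--Imrich formula for $\mu(d)$ and one extra argument for $d=3$. Since $\Aut(G)\subseteq\End(G)$, a rigid graph is asymmetric, so $\nu(d)\ge\mu(d)$ for every $d$. For $d\in\{4,5\}$, for $d=6$, and for $d>6$ the claimed value of $\nu(d)$ coincides with $\mu(d)$, so in those cases the lower bound is immediate and only the construction has to be carried out. The single genuinely new lower bound is for $d=3$: here $\mu(3)=12$ but we claim $\nu(3)=14$, so the gap of two must be closed by hand.

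For the $d=3$ lower bound I would argue as follows. A cubic graph has even order, so order $13$ is impossible, and it remains to rule out a rigid cubic graph $G$ on $12$ vertices. Such a $G$ is connected (otherwise it would have at least $2\mu(3)=24$ vertices) and a core. By Brooks' theorem $\chi(G)\le 3$ since $G\ne K_4$, so if $G$ contained a triangle then $\omega(G)=\chi(G)=3$ and the core of $G$ would be $K_3\subsetneq G$, a contradiction; hence $G$ is triangle-free. It is also non-bipartite, since a bipartite graph with an edge retracts onto $K_2$, so $G$ has odd girth at least $5$; and since cubic graphs of girth at least $6$ have at least $14$ vertices (the Heawood graph being the $(3,6)$-cage), $G$ would have girth $4$ or $5$. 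It then remains to inspect the finitely many connected cubic graphs on $12$ vertices of girth $5$, and of girth $4$ and non-bipartite, and to verify that each has a non-trivial automorphism or a proper endomorphism. This finite case analysis, which I would organise by girth and carry out with computer assistance, is the main obstacle in the lower bound.

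For the upper bounds I would split off the small degrees. For $d\in\{3,4,5,6\}$ I would give explicit graphs --- a cubic graph on $14$ vertices, a $4$-regular and a $5$-regular graph on $10$ vertices, and a $6$-regular graph on $11$ vertices --- and verify rigidity directly; to keep this short I would use Brooks' theorem to pin down $\chi$ and $\omega$ and the facts that an endomorphism maps a maximum clique onto a maximum clique and must preserve any locally unique feature (such as a vertex lying in an atypical number of triangles), which typically forces the endomorphism to be the identity. For general $d>6$ I would construct a uniform family of $d$-regular graphs of order $2\lceil d/2\rceil+4$; as this equals $\mu(d)$, a natural candidate is a suitable variant of the Baron--Imrich minimum asymmetric $d$-regular graphs, adjusted so that the graph is not merely asymmetric but a core. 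Asymmetry is inherited from (or checked as in) the Baron--Imrich construction, so the real work is to exclude every proper retract, which for such a dense graph reduces to analysing the possible images of its large maximum cliques under an endomorphism and checking that the symmetry-breaking part of the construction simultaneously blocks every fold.

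The two places I expect to be hardest are therefore: the exact value $\nu(3)=14$, which unlike the other degrees is not delivered by the bound $\nu(d)\ge\mu(d)$ and requires ruling out every cubic core on $12$ vertices; and, on the construction side for $d>6$, guaranteeing that the dense examples are cores and not just asymmetric, since a dense regular graph with $\omega=\chi$ is never a core.
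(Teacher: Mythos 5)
Your lower-bound reasoning matches the paper's: $\nu(d)\geq\mu(d)$ by asymmetry plus Baron--Imrich, with the single extra argument for $d=3$ (parity kills order $13$, then rule out cubic cores on $12$ vertices). Your structural reductions for that step (connected, triangle-free by Brooks', non-bipartite, girth $4$ or $5$) are a nice supplement to the paper, which simply reports the computer-established Observation that $14$ is minimal. The small-$d$ upper bounds by explicit examples are likewise in line, though the paper actually does this for $d\in\{3,\ldots,8\}$, not just $d\in\{3,\ldots,6\}$, and this matters (see below).

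There is, however, a genuine gap in the construction for large $d$, and it is the one you yourself flag as the hard part without resolving. Saying you would take ``a suitable variant of the Baron--Imrich minimum asymmetric graphs, adjusted so that the graph is a core'' and ``analyse the possible images of its large maximum cliques'' is not a construction, and your own remark that a dense regular graph with $\omega=\chi$ is never a core shows you have not found how to escape that trap. The paper's actual mechanism is to \emph{not} tinker with a dense graph directly, but to take complements of sparse ones: Lemma~\ref{lem:complement} shows that if $G$ is connected, asymmetric, $d$-regular, non-bipartite and triangle-free, then $\overline{G}$ is rigid and $(n-d-1)$-regular. The rigidity of $\overline{G}$ is not checked clique by clique ad hoc; it follows from a criterion of~\cite{HN92}, once one notes that any independent set $K$ of a connected, regular, non-bipartite, triangle-free $G$ has $|N_G(K)|>|K|$, i.e.\ every clique of $\overline{G}$ has more than $|K|$ non-neighbours. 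Feeding in the families of Lemmas~\ref{lem:cubic} ($3$-regular) and~\ref{lem:quartic} ($4$-regular) on all even $n\geq 14$ produces rigid $d$-regular graphs of order exactly $2\lceil d/2\rceil+4$ for even $d$ (via $3$-regular $G$) and odd $d$ (via $4$-regular $G$). This is the missing key idea. A secondary but real issue with your plan: those lemmas require $n\geq 14$, so the complement construction only reaches $d\geq 9$; the cases $d=7,8$ fall outside it and must be settled by explicit examples, which is why the paper's computer list extends to $d=8$. Your split at $d=6$ would leave $d=7,8$ unproven.
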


A family of rigid graphs is \emph{mutually rigid} if there are no homomorphisms between different members of the family. The \emph{odd girth} of a graph is the length of its shortest odd cycle. Our second result gives a structural strong positive answer to van der Zypen's question:

\begin{restatable}{teo}{rigid}
\label{thm:regular_rigid_family}
    For every $d\geq 3$ and odd $g\geq 7$ there exist infinitely many mutually rigid $d$-regular graphs of odd girth $g$.
\end{restatable}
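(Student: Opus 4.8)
Fix $d\ge 3$ and an odd integer $g\ge 7$. The plan is to construct, for every $n\ge 1$, a $d$-regular graph $G_n$ of girth $g$ (hence of odd girth $g$) by a ``bounded anchor plus growing counter'' scheme, and then to verify that $\{G_n\}_{n\ge1}$ is an infinite mutually rigid family. I would fix two finite gadgets: an \emph{anchor} $A$ and a \emph{connector} $B$, each equipped with two distinguished \emph{ports} of degree $d-1$, every other vertex having degree $d$. For $n\ge1$ let $Q_n$ be the ``chain'' obtained from $n$ disjoint copies $B^{(1)},\dots,B^{(n)}$ of $B$ by adding, for $1\le i<n$, an edge joining the second port of $B^{(i)}$ to the first port of $B^{(i+1)}$; its two remaining free ports still have degree $d-1$. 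Finally let $G_n$ arise from the disjoint union $A\sqcup Q_n$ by adding one edge from each port of $A$ to one free port of $Q_n$. Then every port acquires degree $d$, so $G_n$ is $d$-regular, and since $|V(G_n)|=|V(A)|+n\,|V(B)|$ is strictly increasing in $n$, the $G_n$ are pairwise non-isomorphic. All the content is in the design of $A$ and $B$.

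I would require three properties. \emph{(i) No short structure:} $A$ has girth $g$, whereas $B$ --- and hence every chain $Q_n$ --- has girth strictly larger than $g$ and no walk shorter than $g$ between its two ports; a short analysis of how a cycle can sit relative to the two added edges then shows that every $G_n$ has girth exactly $g$ and that all of its $g$-cycles lie inside the $A$-part. \emph{(ii) Rigidly embedded anchor:} any homomorphism $\varphi\colon G_m\to G_n$ sends a shortest odd cycle of the $A$-part of $G_m$ to a $g$-cycle of $G_n$, which by (i) lies in the $A$-part of $G_n$; taking $A$ rigid and suitably structured (say $2$-connected, with every edge on a $g$-cycle, and of small radius), a ``spreading'' argument should propagate this to show that $\varphi$ maps the $A$-part of $G_m$ isomorphically onto that of $G_n$, hence is the identity on the $A$-part of $G_n$ and, in particular, fixes the two prescribed images of the free ports of $Q_m$. \emph{(iii) Unfoldable chains:} whenever a homomorphism $G_m\to G_n$ is the identity on the $A$-part, its restriction to $Q_m$ is an isomorphism onto $Q_n$ respecting the free ports; in particular $m=n$ and the homomorphism is the identity.

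Granting gadgets with (i)--(iii), the theorem follows immediately. If $\varphi\in\End(G_n)$, then (ii) makes $\varphi$ the identity on the $A$-part and (iii), applied with $m=n$, forces $\varphi=\mathrm{id}$, so $G_n$ is rigid. If $\varphi\colon G_m\to G_n$ is any homomorphism, then (ii) again makes $\varphi$ the identity on the $A$-part, and (iii) yields $m=n$; hence there is no homomorphism between distinct members, and $\{G_n\}_{n\ge1}$ is an infinite mutually rigid family of $d$-regular graphs of odd girth $g$.

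The heart of the matter, where I expect the real work to lie, is the explicit design of $A$ and $B$ meeting (i)--(iii) simultaneously for \emph{all} $d\ge3$ and \emph{all} odd $g\ge7$; the hypothesis $g\ge7$ is presumably exactly what leaves enough slack in the girth to accommodate the required decorations. Morally, $B$ should realise a ``rigid rooted path'' between its ports, each internal vertex carrying a fixed rigid ``flag'' that both immobilises the vertex and blocks folding a longer chain into a shorter one --- and blocks folding the chain back through the anchor --- which is what should give (iii); the difficulty is to do this while keeping $B$ of girth larger than $g$, $d$-regular off its ports and $(d-1)$-regular at them, and free of short port-to-port walks, all of these constraints pulling against one another. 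The two remaining ingredients --- the ``spreading'' lemma behind (ii) and the direct construction of a rigid anchor $A$ of girth $g$ with the stated connectivity features, built in the same spirit as $B$ --- I would expect to be more routine, though still requiring care so that all the girth and degree constraints remain in force.
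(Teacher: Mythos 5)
Your proposal takes a genuinely different route from the paper, but it has a concrete gap that the paper's machinery is specifically designed to close, and which your sketch does not.

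The paper proves the theorem by first producing, for each $d$, an infinite family of mutually rigid \emph{digraphs} $S(d,\ell,+_{d,\ell})$ in which every vertex has total degree $d$ (the chain of transitive tournaments in Section~4.1), then producing a single rigid $d$-\emph{indicator} $S(d,g)$ of odd girth $g$ (the $g$-tiling factors of Section~4.4 for $d\in\{3,4,5\}$, followed by an induction through the Cartesian-product variant and the \v s\'ip product), and finally invoking the category equivalence of Lemma~4.3 to transfer the digraph-level Hom-structure to the replaced graphs verbatim. You instead work entirely at the graph level with an anchor--plus--chain architecture. That is a legitimately different decomposition, but it forgoes the key separation of concerns in the paper, and the place this bites is your property (iii).

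The gap: you need to argue that a homomorphism $\varphi\colon G_m\to G_n$ that is the identity on the $A$-part must map $Q_m$ \emph{into} $Q_n$, i.e.~that the chain cannot be folded partially back into the anchor. Your only structural tag is odd girth: $A$-vertices lie on $g$-cycles, $B$-vertices do not. That tag constrains the \emph{image} of $A$ (every $g$-cycle of $G_m$ must land on a $g$-cycle of $G_n$, hence inside $A$), but it places \emph{no} constraint on where $B$-vertices go, since being off all $g$-cycles is not a property preserved under taking images. Concretely, the port $q_1$ of $Q_m$ sits next to $a_1\in A$, and $\varphi(q_1)$ only needs to be a neighbour of $a_1$ in $G_n$, which includes $d-1$ vertices of $A$; from there nothing in your sketch prevents $\varphi$ from sinking the whole of $Q_m$ into $A$ (so that $A$ is a retract of $G_n$, killing rigidity). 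You acknowledge this (``blocks folding the chain back through the anchor'') but offer no mechanism. This is exactly the issue the paper's Lemma~4.3 confronts head-on: in the \v s\'ip product, every non-gadget vertex has both an in-arc and an out-arc, so if such a vertex mapped into a gadget, both adjacent gadget copies would collapse into that gadget and produce a bidirected (resp.~directed) path of length $2$ from $\mathrm{out}\,S$ to $\mathrm{in}\,S$, contradicting hypothesis (iv) (resp.~(iv')). That argument depends on the uniform ``every arc is a gadget'' structure of the \v s\'ip product; your anchor--chain architecture is not uniform in this way, and you would need a new idea to play the same role. Until that is supplied, property (iii) --- and hence both rigidity of each $G_n$ and the mutual-rigidity conclusion $m=n$ --- is not established.

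Two smaller points. In (ii) the ``spreading'' step is unnecessary: once you know every $A$-vertex is on a $g$-cycle and every $g$-cycle of $G_n$ lies in $A$, you get $\varphi(V(A))\subseteq V(A)$ directly, so $\varphi|_A$ is an endomorphism of $A$ and rigidity of $A$ already gives $\varphi|_A=\mathrm{id}$; the real work there is constructing a rigid graph $A$ with two degree-$(d{-}1)$ ports, odd girth $g$, and every vertex on a $g$-cycle, which is essentially the content of the paper's $g$-tiling factors and Cartesian-product induction. Also, you ask for \emph{girth} $g$ when only \emph{odd girth} $g$ is required (and is what the paper delivers); the stronger girth demand needlessly tightens the already conflicting constraints on $A$ and $B$.
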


Extending the above-mentioned result of Hell and Ne\v{s}et\v{r}il~\cite{HN73} towards all monoids is impossible in a strong sense. Namely, Babai and Pultr~\cite{BP80} show that no graph class excluding a topological minor can represent all monoids. In particular, there is no $d$ such that every monoid is isomorphic to the endomorphism monoid of a graph of maximum degree at most $d$. This prompted Babai and Pultr~\cite[Problem 2.3]{BP80} to ask whether every monoid is isomorphic to the endomorphism monoid of a regular graph. Using as a central ingredient Theorem~\ref{thm:regular_rigid_family} we are able to give a positive answer: 

\begin{restatable}{teo}{babaipultr}
\label{teo:babaipultr}
    For every monoid $M$ and odd $g\geq 7$ there exist infinitely many regular graphs $G$ of odd girth $g$ such that $\End(G)\cong M$.
\end{restatable}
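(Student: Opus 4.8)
The plan is to replace the monoid $M$ by an edge-labelled relational structure whose endomorphism monoid is $M$, and then to realise that structure as a regular graph of odd girth $g$ using the rigid regular graphs produced by Theorem~\ref{thm:regular_rigid_family} as building blocks. For the first step, let $\mathcal{C}_M$ be the structure on vertex set $M$ carrying, for each $m\in M$, the binary relation $R_m=\{(x,xm):x\in M\}$. A self-map $\phi$ of $M$ preserves every $R_m$ iff $\phi(xm)=\phi(x)m$ for all $x,m$; taking $x$ to be the identity gives $\phi=\lambda_{\phi(e)}$, where $\lambda_a$ denotes left multiplication by $a$, and conversely each $\lambda_a$ preserves every $R_m$. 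Since $a\mapsto\lambda_a$ is injective and satisfies $\lambda_a\circ\lambda_b=\lambda_{ab}$, we get $\End(\mathcal{C}_M)\cong M$.

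Second, I would turn $\mathcal{C}_M$ into a graph. Fix $d_0\ge 3$ and, using Theorem~\ref{thm:regular_rigid_family} with the given odd girth $g$, choose pairwise non-homomorphic rigid $d_0$-regular graphs of odd girth $g$: one graph $D_m$ for every $m\in M$, plus one extra graph $D_\star$; in each of them fix two vertices at distance at least $g$ (possible, the relevant families being infinite, hence containing arbitrarily large members). Form $G_0$ as follows: take $|M|$ ``base'' vertices labelled by $M$; for every ordered pair $(x,m)\in M\times M$ glue a fresh copy of $D_m$, joining its first distinguished vertex to $x$ and its second to $xm$ by single edges; and attach to each base vertex enough pendant copies of $D_\star$ (glued at their distinguished vertex) so that all base vertices end up with the same degree. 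Then $G_0$ literally records $\mathcal{C}_M$: the copy of $D_m$ glued at $x$ and $xm$ encodes the pair $(x,xm)\in R_m$. Every $\lambda_a$ lifts to an endomorphism $\widehat{\lambda_a}$ of $G_0$ acting as $\lambda_a$ on base vertices, sending the $(x,m)$-copy of $D_m$ to the $(ax,m)$-copy by the canonical isomorphism, and permuting the pendant $D_\star$-copies appropriately; this is a well-defined homomorphism even when $(x,m)\mapsto(ax,m)$ is not injective, and $a\mapsto\widehat{\lambda_a}$ is an injective monoid homomorphism into $\End(G_0)$. The crux is the reverse inclusion.

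The reverse inclusion is, I expect, the main obstacle. One must show that every $\phi\in\End(G_0)$ is of the form $\widehat{\lambda_a}$. The key point is that a base vertex is characterised by a local pattern that cannot be reproduced elsewhere: it is incident, for every $m\in M$, to (the first distinguished vertex of) a copy of $D_m$, whereas any vertex inside a gadget sees only one gadget-type worth of structure in its vicinity. Using that the $D_m$ and $D_\star$ are rigid, pairwise non-homomorphic, and of odd girth $g$, the standard folding analysis for high-girth mutually rigid gadgets forces $\phi$ to map base vertices to base vertices and to map each gadget copy onto a gadget copy of the same type by the canonical isomorphism. Then $\phi$ restricted to base vertices preserves every $R_m$, hence equals $\lambda_a$ for some $a$ by the first step, whence $\phi=\widehat{\lambda_a}$. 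Thus $\End(G_0)\cong\End(\mathcal{C}_M)\cong M$. This is the part requiring genuine care, and it is exactly where the strength of Theorem~\ref{thm:regular_rigid_family} (mutual rigidity together with prescribed large odd girth) is used; making $\phi$ respect base vertices despite the existence of non-injective endomorphisms, and controlling $\phi$ across the gluing edges, are the delicate steps.

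Finally, I would make $G_0$ regular while preserving its endomorphism monoid and its odd girth. In $G_0$ each vertex has one of at most three degrees (base vertices all equal, distinguished vertices of glued copies one unit above $d_0$, all remaining gadget vertices exactly $d_0$); choose $d$ at least the maximum, and attach to each deficient vertex a rigid ``completion gadget'' obtained by small local modifications of further $d$-regular rigid graphs of odd girth $g$ from Theorem~\ref{thm:regular_rigid_family}, using the same gadget (up to taking copies) for vertices of the same type and degree so that the lifts $\widehat{\lambda_a}$ still extend, and choosing all these gadgets pairwise non-homomorphic, non-homomorphic to every $D_m$ and to $D_\star$, and large enough, with far-apart attachment vertices, that no short odd cycle is created. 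The resulting graph $G$ is $d$-regular with $\End(G)\cong M$, and its odd girth is exactly $g$: it is at most $g$ because every $D_m$ contains an odd $g$-cycle, and at least $g$ because any odd cycle of $G$ either lies inside a single gadget (which has odd girth $g$) or must traverse some gadget between two attachment vertices, hence is long by our choices. Letting the orders of the (finitely many) gadgets range over the infinite supplies given by Theorem~\ref{thm:regular_rigid_family} produces infinitely many pairwise non-isomorphic such graphs $G$, which gives the theorem.
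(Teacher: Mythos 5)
Your overall strategy—take the coloured Cayley graph $\mathrm{Cay}_{\mathrm{col}}(M,M)$, replace each arc by a rigid gadget, and then pad to reach regularity—is in the right spirit, and steps one and two correctly mirror the paper's first moves. However, the proposal has a genuine gap exactly where the paper does its serious work: the final regularisation step.

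You want to attach to every vertex of deficient degree a ``rigid completion gadget obtained by small local modifications of further $d$-regular rigid graphs'' from Theorem~\ref{thm:regular_rigid_family}. For this to work, the gadget must be a graph that is rigid, of odd girth $g$, non-homomorphic to all the other gadgets in play, and has a prescribed attachment vertex of degree $d-1$ with all other vertices of degree $d$, so that a single attaching edge brings both ends up to degree $d$. Such an object does not follow by a ``small local modification'' of a $d$-regular rigid graph: deleting an edge, or a vertex, from a rigid graph in general destroys rigidity and can change the odd girth, and nothing in Theorem~\ref{thm:regular_rigid_family} (which you are using as a black box) gives you any control over where the low-degree vertices sit or what the resulting endomorphisms look like. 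Worse, a graph with exactly one vertex of degree $d-1$ and the rest of degree $d$ has odd degree sum for even $d$, so such a one-ended completion gadget does not even exist for even $d$; you would need gadgets with two degree-$(d-1)$ vertices and would have to pair up deficiencies across the graph, a combinatorial scheme you do not describe. Constructing rigid graphs with exactly two degree-$(d-1)$ vertices, of prescribed odd girth, far apart, and satisfying the folding hypotheses needed for the argument to go through is precisely what the paper's $d$-indicators $S(d,g)$ (its Claim~\hyperref[claim:strong_induction]{1.2.1}, built up through Sections~4.3--4.5 via tiling graphs and the modified Cartesian product) are for. That existence statement is the technical heart of the regular-graph representation, and your proposal assumes it rather than proving it.

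Two further points deserve mention. First, your pendant-padding step (different numbers of $D_\star$-copies at different base vertices) can fail: the lift $\widehat{\lambda_a}$ must send every pendant at $x$ to some pendant at $ax$, so the vertex of maximal gadget-degree must still receive at least one pendant; as written, that vertex would receive none. This is repairable but unaddressed, and it is your substitute for the paper's careful degree-homogenisation of the relational system (its Lemmas~\ref{lem:step1}--\ref{lem:step3}), which is genuinely different in approach and arguably more delicate than you make it sound. Second, the ``standard folding analysis'' establishing that every endomorphism of $G_0$ has the form $\widehat{\lambda_a}$ needs each gadget vertex to lie on an odd $g$-cycle (so that gadget vertices are distinguishable from base vertices by odd girth); Theorem~\ref{thm:regular_rigid_family} as stated guarantees odd girth $g$ but not that property, and you do not verify it. All of these gaps point to the same conclusion: the heavy lifting the paper does in its Lemma~\ref{lem:sip_product_and_categories} and in constructing the indicator families is not optional, and your sketch leaves exactly those parts unproved.
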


This paper is structured as follows: Section~2 introduces some general terminology and notation; Section~3 is devoted to the proof of Theorem~\ref{thm:smallrigid}; Section~4 contains, in separate subsections, the tools to prove Theorems~\ref{thm:regular_rigid_family} and \ref{teo:babaipultr}, along with the proof of Theorem~\ref{thm:regular_rigid_family}; Section~5 contains the proof of Theorem~\ref{teo:babaipultr}; and Section 6 comments on related problems.

\section{Terminology and notation}
We denote by $[k]$ the set $\{1,\ldots,k\}$ of the first $k$ positive integers. 

Throughout the paper, graphs are undirected and simple (they have no loops or multiple edges), unless specified. Digraphs may have loops and anti-parallel arcs, but multiple parallel arcs are not allowed. This way, a digraph $D$ may be thought of as a relation on the set of vertices $V(D)$. Anti-reflexive symmetric relations correspond to (undirected) graphs, and anti-reflexive antisymmetric relations to \emph{oriented graphs}. A special case is a \emph{tournament}, which is an oriented complete graph, i.e., any two distinct vertices have exactly one arc between them. A \emph{homomorphism} between two (di)graphs $D$ and $D'$ is a mapping $\varphi:V(D)\rightarrow V(D')$ that sends arcs to arcs. When $\varphi$ is injective, $\varphi$ is called a \emph{monomorphism}, and when $D=D'$, $\varphi$ is an \emph{endomorphism}. If both hold simultaneously, $\varphi$ is an \emph{automorphism}. An \emph{isomorphism} is a bijective homomorphism that sends non-arcs to non-arcs. The set of homomorphisms between $D$ and $D'$ is denoted by $\mathrm{Hom}(D,D')$. The set of endomorphisms (automorphisms) of a (directed) graph $D$ is closed under composition and forms a monoid $\End(D)$ (group $\Aut(D)$). The \emph{indegree} and \emph{outdegree} of a vertex $v$ of $D$ is the number of incoming and outcoming arcs at $v$, respectively, and the \emph{total degree} of $v$ is the sum of its indegree and outdegree. The subgraph of $D$ induced by a set of vertices $W\subseteq V(D)$ is denoted by $D[W]$.

A \emph{binary relational system} is a finite set $V$ together with a family of relations $A_i\subseteq V^2$, $i\in I$, where $I$ is a finite set. If we want to specify $I$, we call this a \emph{binary $I$-system}. Sometimes it is useful to understand a binary $I$-system as a digraph $D$ with $I$-coloured arcs, conveniently identifying $V$ with the set $V(D)$ of vertices of $D$, and $A_i$ with the set $A_i(D)$ of $i$-arcs (parallel arcs of different colours are allowed). A \emph{homomorphism} between two binary $I$-systems $D$ and $D'$ is a mapping $\varphi:V(D)\rightarrow V(D')$ that sends $i$-arcs to $i$-arcs, for every $i\in I$. Endomorphisms and automorphisms are defined in an analogous way, and the endomorphism monoid and the automorphism group of $D$ are denoted by $\End(D)$ and $\Aut(D)$ as in the case of graphs and digraphs. The \emph{indegree} (resp.~\emph{outdegree}) of a vertex $v$ of $D$ is the sum of the indegree (resp.~outdegree) of $v$ as a vertex of $(V,A_i)$ over all $i\in I$. The \emph{total degree} of $v$ of is the sum of its indegree and outdegree, and is denoted by $\deg_D v$ (or just by $\deg v$). $D$ is an \emph{induced subsystem} of $D'=(V',\{A'_i\mid i\in I'\})$ if $I\subseteq I'$, $V\subseteq V'$, $A_i=A'_i\cap V^2$ for every $i\in I$, and $\emptyset=A'_i\cap V^2$ for every $i\in I'\backslash I$.

The neutral element of a monoid is often denoted by $e$. Given a monoid $M$ and a subset $C\subseteq M$, the \emph{coloured Cayley graph} of $M$ with respect to $C$ is the binary relational system $\mathrm{Cay}_{\mathrm{col}}(M,C)=(M,\{A_c\mid c\in C\})$, where $(u,v)\in A_c$ if and only if $uc=v$.

\section{Small rigid regular graphs}\label{sec:small_rigid}
The results for small graphs obtained in this section have been mostly established by computer. For the exhaustive generation we use the code of Meringer~\cite{Mer99} and we implement checks for asymmetry, rigidity and similar properties in Sage~\cite{sagemath}. For convenience we provide the g6-codes of graphs that we consider interesting. The first such result is the following:

\begin{obs}\label{obs:cubic}
    There exists a rigid $3$-regular graph on $14$ vertices, see the left of Figure~\ref{fig:smallrigid}, and no smaller one.
\end{obs}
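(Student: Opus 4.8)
The claim has two parts: exhibiting a rigid cubic graph on $14$ vertices, and certifying that no rigid cubic graph on fewer than $14$ vertices exists. Since the statement is essentially a computer-assisted result, my plan is to organize the argument so that the bulk reduces to a finite, verifiable search, while pinning down by hand those facts that give structural leverage and cut the search space.

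For the existence half, I would simply display a concrete graph $G$ on $14$ vertices (say, via its g6-code, as promised in the surrounding text, together with the picture referenced as the left of Figure~\ref{fig:smallrigid}) and verify rigidity directly: one checks that the only endomorphism of $G$ is the identity. The practical way to do this is to first confirm $\Aut(G)=\{e\}$ (asymmetry) and then rule out non-injective endomorphisms. A clean structural tool here is that any non-injective endomorphism of a finite graph gives a proper retract, hence a proper subgraph $H\subsetneq G$ with $\mathrm{Hom}(G,H)\ne\emptyset$ and odd girth at most that of $G$; if $G$ is chosen with small odd girth (a triangle or $C_5$) this is already restrictive, and in any case for a fixed $14$-vertex graph the finitely many homomorphic images can be enumerated. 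I would state this as the verification performed in Sage.

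For the minimality half — no rigid cubic graph on $n\le 13$ vertices, equivalently on $n\in\{4,6,8,10,12\}$ since cubic graphs have even order — the plan is exhaustive generation followed by a rigidity test. Here I would invoke Meringer's generation code~\cite{Mer99} to produce all connected cubic graphs of each such order (connectivity is no loss: a rigid graph with more than one component would force a homomorphism collapsing one component into another of no greater "complexity", and in the cubic case each component is itself cubic, so a disconnected cubic graph always admits a non-identity endomorphism unless it has a single vertex-less... — more carefully, two isomorphic components yield a swap, and otherwise one maps the "simpler" component into... this needs the observation that among connected cubic graphs there is no infinite descending chain, so one can reduce to the connected case by a short argument). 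For each generated graph I would run the endomorphism-monoid check implemented in Sage and confirm that none is rigid.

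The main obstacle is not conceptual but one of trust and scale: the count of connected cubic graphs on $12$ vertices is already in the thousands, so the honest part of the writeup is (i) making precise the reduction to connected graphs, so that the search is provably complete, and (ii) being explicit about what the rigidity check computes — ideally noting that it suffices to check there is no endomorphism onto a proper induced subgraph, and for asymmetry that $\Aut(G)=\{e\}$ — so that a reader could in principle re-run it. I would also remark that the case analysis can be shortened by first discarding all graphs with a non-trivial automorphism (already the $n\le 12$ asymmetric cubic graphs are comparatively few, since $\mu(3)=12$ tells us the smallest asymmetric cubic graph has $12$ vertices), and then testing only those few for the stronger property of rigidity — finding that even at order $12$ none of the asymmetric ones is rigid, which forces $\nu(3)\ge 14$.
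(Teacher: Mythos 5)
Your proposal is essentially the paper's approach: Observation~\ref{obs:cubic} is established by exhaustive computer search (Meringer's generator plus a rigidity check in Sage) over cubic graphs of order at most $14$, together with exhibiting a concrete $14$-vertex example whose g6-code the paper supplies. Your pruning suggestions (restricting to connected graphs, and to asymmetric ones via $\mu(3)=12$) are sensible, and the clean way to discard disconnected graphs --- which your mid-paragraph argument leaves dangling --- is simply that a cubic graph on at most $12$ vertices with more than one component has a component on at most $8$ vertices, which by Baron--Imrich~\cite{BI69} carries a non-trivial automorphism extending to the whole graph.
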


\begin{figure}[h]
\centering
\includegraphics[width=0.95\textwidth]{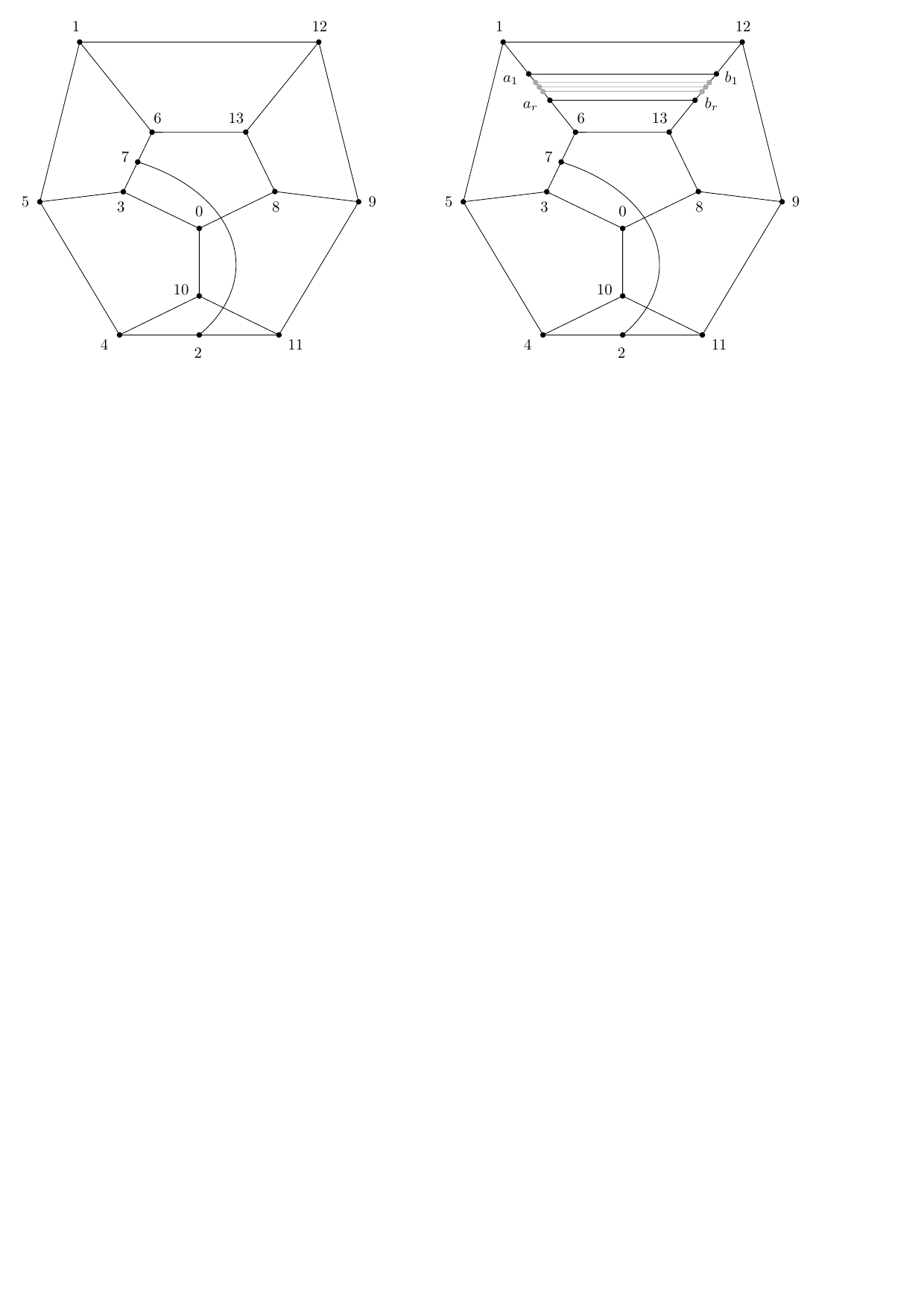}
\caption{A smallest $3$-regular rigid graph with g6-code \texttt{MCHY\@e??KOCBOC?g\_} and the subdivided graph $G_r$ from Lemma~\ref{lem:cubic}.}
\label{fig:smallrigid}
\end{figure}

\begin{lemma}\label{lem:complement}
    If a graph $G$ is connected, asymmetric, $d$-regular, non-bipartite, and triangle-free, then the complement $\overline{G}$ is rigid and $n-d-1$-regular.
\end{lemma}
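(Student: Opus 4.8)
The plan is to show that any endomorphism $\varphi$ of $\overline{G}$ is the identity, using the hypotheses to transfer information between $G$ and $\overline{G}$. The key observation is that since $G$ is $d$-regular on $n$ vertices, $\overline{G}$ is $(n-d-1)$-regular, so the regularity statement is immediate; the content is rigidity. First I would recall the standard fact that an endomorphism of a finite graph that is \emph{locally surjective} on closed (or open) neighbourhoods in a degree-regular setting must be an automorphism: more concretely, an endomorphism $\varphi$ of a finite $k$-regular graph $H$ need not be injective in general, but if $H$ is \emph{not} bipartite one can still extract rigidity from extra structure. So the real work is to first upgrade $\varphi$ to an automorphism, and then use asymmetry of $G$ (equivalently of $\overline{G}$, since $\Aut(G)=\Aut(\overline{G})$) to conclude $\varphi=\mathrm{id}$.

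The main step is therefore: \emph{every endomorphism of $\overline{G}$ is injective.} Here is where triangle-freeness and non-bipartiteness of $G$ enter. Triangle-freeness of $G$ means that $\overline{G}$ has independence number at most $2$, i.e., the complement of $\overline{G}$ has no triangle; equivalently, any three vertices of $\overline{G}$ span at least one edge. Non-bipartiteness of $G$ means $G$ contains an odd cycle; I would translate this into a statement forbidding $\overline{G}$ from having a homomorphism to a small target that would allow folding. Suppose $\varphi$ is not injective, so $\varphi(u)=\varphi(v)$ for some $u\neq v$. Then $u$ and $v$ are non-adjacent in $\overline{G}$ (an endomorphism cannot identify adjacent vertices), hence adjacent in $G$. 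I would then count: the image $\varphi(\overline{G})$ is an induced subgraph $H'$ of $\overline{G}$ on at most $n-1$ vertices, and since $\varphi$ is a retraction onto a core-like structure, every vertex of $H'$ still has degree at least... here I would compare degrees. Because $\overline{G}$ is $(n-d-1)$-regular and $\varphi$ maps the closed neighbourhood of each vertex onto a clique-free-complement neighbourhood, a pigeonhole/degree argument forces $|V(H')|\ge n-d-1+1=n-d$, and combined with connectedness and the odd-girth/non-bipartite constraint I would derive a contradiction with triangle-freeness of $G$ (equivalently $\alpha(\overline{G})\le 2$): identifying two vertices in a graph with independence number $2$ is very restrictive, and iterating would collapse $\overline{G}$ onto a clique, contradicting non-bipartiteness of $G$ (a complete graph is the complement of an edgeless graph, which is bipartite).

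Once injectivity is established, $\varphi$ is an injective endomorphism of a finite graph, hence a bijection, and since it sends edges to edges bijectively on a finite vertex set it is an automorphism of $\overline{G}$. As $\Aut(\overline{G})=\Aut(G)$ and $G$ is asymmetric, $\varphi=\mathrm{id}$, so $\overline{G}$ is rigid. I expect the delicate point to be the degree/counting argument showing injectivity — in particular making precise how triangle-freeness of $G$ (independence number $2$ of $\overline{G}$) together with non-bipartiteness rules out any proper identification — and I would want to double-check the edge cases where $n-d-1$ is small (so that $\overline{G}$ itself is sparse) to ensure the argument does not degenerate.
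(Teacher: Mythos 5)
The regularity statement and the reduction to showing that $\overline{G}$ is a \emph{core} are both correct, and you correctly translate triangle-freeness of $G$ into $\alpha(\overline{G})\le 2$; that much matches the paper. However, the crucial step — actually proving that $\overline{G}$ is a core — is not carried out. Your phrases ``a pigeonhole/degree argument forces $|V(H')|\ge n-d$'', ``I would derive a contradiction'', and ``iterating would collapse $\overline{G}$ onto a clique'' are placeholders, not an argument, and the last one is in fact false in the generality you state it: a graph with independence number at most $2$ can perfectly well have a non-clique core (the $5$-cycle $C_5$ has $\alpha=2$ and is its own core), so a proper endomorphism of $\overline{G}$ need not ``collapse it to a clique'', and non-bipartiteness of $G$ alone does not rule it out.

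What the paper actually uses is a concrete Hall-type sufficient condition for being a core, cited as Theorem~9 of Hell--Ne\v{s}et\v{r}il~\cite{HN92}: one must check that for every clique $K$ of $\overline{G}$ (equivalently, independent set of $G$), the set of vertices of $\overline{G}$ that are non-adjacent to some vertex of $K$ — i.e., $N_G(K)$ — has size strictly greater than $|K|$. This is where connectedness, $d$-regularity and non-bipartiteness of $G$ enter in a precise way: if $|N_G(K)|\le|K|$ then double-counting edges between the independent set $K$ and $N_G(K)$ in a $d$-regular graph forces $|N_G(K)|=|K|$ with all $d|K|$ edges going across, hence $K\cup N_G(K)$ is a bipartite component and by connectivity $G$ itself is bipartite, contradiction. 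Your proposal never identifies this condition, and the degree bound you gesture at actually goes the other way (if $\varphi(u)=\varphi(v)$ with $u\not\sim_{\overline{G}} v$, then $\alpha(\overline{G})\le 2$ forces every other vertex to be $\overline{G}$-adjacent to $u$ or to $v$, so $\varphi(u)$ is adjacent to every other image vertex, giving $|\varphi(V)|\le n-d$, an \emph{upper} bound). So the proposal has the right scaffold but a genuine gap where the core argument should be, and the fallback heuristic it offers is not sound.
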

\begin{proof}
    It follows from the definitions that $\overline{G}$ is asymmetric, $n-d-1$-regular, and that its largest independent set is of size at most $2$. Suppose that $K$ is a subset of vertices that induces a complete subgraph in $\overline{G}$. Then $K$ is independent in $G$. Since $G$ is regular and connected, if the set of neighbours of $K$ in $G$ was of size at most $|K|$, then $G$ would be bipartite. Hence, the set of non-neighbours of $K$ in $\overline{G}$ is larger than $|K|$. It follows from~\cite[Theorem 9]{HN92} that all endomorphisms of $\overline{G}$ are automorphisms.
\end{proof}

\begin{lemma}\label{lem:cubic}
    For every even $n\geq 14$ there exist a connected, asymmetric, $3$-regular, non-bipartite, and triangle-free graph $G$ on $n$ vertices.
\end{lemma}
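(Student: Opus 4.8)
The plan is to give an explicit construction. We need, for every even $n \geq 14$, a connected, asymmetric, $3$-regular, non-bipartite, triangle-free graph on $n$ vertices. The natural approach is to fix one small base graph satisfying all the required properties (which exists by Observation~\ref{obs:cubic}, or a handcrafted variant of it), and then find a way to enlarge it by a controlled number of vertices while preserving all five properties.

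\textbf{Step 1: a good starting graph.} First I would take (or build by hand) a $3$-regular graph $G_0$ on $14$ vertices that is connected, asymmetric, non-bipartite, and triangle-free; the graph of Observation~\ref{obs:cubic} with g6-code \texttt{MCHY\@e??KOCBOC?g\_} is a candidate, though for the induction step it may be cleaner to pick one containing a specially marked edge or short path that we can later use as a ``gadget socket.'' I would also verify (by inspection or computer) that this graph admits no non-identity endomorphism, so that rigidity is actually available even though the lemma only claims asymmetry — but since only asymmetry, not rigidity, is needed here, it suffices that $\Aut(G_0) = \{e\}$.

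\textbf{Step 2: an enlarging gadget adding two vertices.} The key construction step: I want an operation that takes such a graph on $n$ vertices and produces one on $n+2$ vertices with all properties preserved. The standard trick is edge subdivision combined with a small attachment. For instance, pick an edge $uv$ of the current graph, subdivide it twice to get a path $u - x - y - v$ (this adds two vertices $x,y$ of degree $2$), and then restore $3$-regularity by adding a new edge between $x$ and $y$ — but that creates a multi-edge or a vertex of wrong degree, so instead one subdivides two independent edges $u_1v_1$ and $u_2v_2$ once each, obtaining degree-$2$ vertices $x_1, x_2$, and joins $x_1$ to $x_2$; this adds exactly two vertices and keeps the graph $3$-regular, connected, and (choosing $u_iv_i$ with care) triangle-free and non-bipartite. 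To make this rigorous one must choose the edges $u_1v_1, u_2v_2$ so that the local structure around the new ``rung'' $x_1x_2$ is different from the local structure everywhere else, which is what destroys any potential new automorphism. A convenient way to guarantee this is to always subdivide the same two designated edges of the base gadget and then graft the new rung there, so the modified region has a recognizable fingerprint; induction then shows every automorphism must fix that region pointwise and hence, by asymmetry of the base, be the identity. The figure mentioned in the excerpt (``the subdivided graph $G_r$ from Lemma~\ref{lem:cubic}'') strongly suggests this subdivision-based approach is the intended one.

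\textbf{Step 3: induction and bookkeeping.} With the gadget in hand, the proof is an induction on $n$: the base case $n = 14$ is Step~1, and the inductive step applies the gadget of Step~2. The non-bipartiteness is inherited automatically since the gadget (subdividing edges an even total number of times, or compensating parities) does not create a bipartition — one must just check the parity of cycle lengths introduced, which is routine but must be done. Triangle-freeness is immediate if the two subdivided edges are chosen non-adjacent and not sharing a common neighbour with the new rung; connectivity is immediate. Asymmetry is the only substantive point, handled as in Step~2.

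\textbf{Main obstacle.} The hard part will be controlling the automorphism group through the inductive step: it is easy to add two vertices and keep the graph $3$-regular, connected, triangle-free, and non-bipartite, but an ill-chosen gadget can easily create a symmetry (e.g.\ a reflection swapping the two new vertices, or an automorphism exploiting a now-symmetric neighbourhood). The remedy is to engineer the gadget so that the enlarged region is rigidly ``anchored'' — for example by ensuring the two subdivision vertices have pairwise non-isomorphic rooted neighbourhoods up to some radius, forcing any automorphism to fix them and then to restrict to an automorphism of the smaller graph, which is trivial by the induction hypothesis. Pinning down this anchoring argument cleanly, and checking the finitely many small values of $n$ where the asymptotic structure has not yet ``kicked in'' (likely by direct computer verification, consistent with the paper's stated methodology), is where the real work lies.
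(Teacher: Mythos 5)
Your construction matches the paper's exactly at the gadget level: the paper also enlarges a fixed $14$-vertex base graph $G_0$ by subdividing two edges once each (specifically $\{a_{r-1},6\}$ and $\{b_{r-1},13\}$, where $a_{r-1},b_{r-1}$ are the vertices introduced at the previous step, with $a_0=1$, $b_0=12$) and adding a new rung $\{a_r,b_r\}$ between the two subdivision vertices, so that $G_r$ has $14+2r$ vertices. Connectivity, $3$-regularity, triangle-freeness and non-bipartiteness are indeed routine for this gadget, as you say. So you have found the right construction.

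The gap is in the asymmetry argument, and it is the substantive part of the lemma. You propose to engineer the new pair $a_r,b_r$ to have a ``recognizable fingerprint'' forcing any automorphism to fix them, then restrict to $G_{r-1}$ and invoke an inductive hypothesis; you also hedge that the finitely many small cases might need computer verification. Neither is what the paper does, and your version is not actually carried out. A local-fingerprint argument is delicate here because the two ladder legs $1=a_0,a_1,\ldots,a_r,6$ and $12=b_0,b_1,\ldots,b_r,13$, together with the rungs $\{a_i,b_i\}$, form a nearly self-similar structure: $a_r$ and $a_0$ (and $b_r$ and $b_0$) are not distinguished by bounded-radius neighbourhoods inside the ladder, so any fingerprinting must reach back into the asymmetric core $G_0$, and at that point the argument is no longer local. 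The paper avoids this by arguing ``from the inside out'' rather than ``from the gadget in'': it identifies the $4$-cycle $C$ on $\{2,4,10,11\}$ as the unique $4$-cycle not sharing an edge with another $4$-cycle, hence fixed setwise by any $\varphi\in\Aut(G_r)$; then $\{2,11\}$ as the unique edge of $C$ not lying on a $5$-cycle, hence fixed setwise; a short case check rules out the flip, so $2,11$ (then $4,10$, then $5,7,0,9$, then $3,8$, then $1,6,12,13$, and so on along the ladder) are fixed pointwise, each new vertex being a neighbour of a fixed vertex with two already-fixed neighbours. This single propagation argument works uniformly for every $r\ge0$ (including the base case $r=0$), with no induction on $r$ and no residual small cases to check by computer. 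So while your plan reaches the correct construction, the asymmetry proof — the ``real work'' you correctly flag — is missing, and the strategy you sketch for it is both different from and less direct than the one the paper uses.
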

\begin{proof}
Let $G_0$ be the graph on vertex set $\{0,1,\ldots,13\}$ depicted in the left of Figure~\ref{fig:smallrigid}. Let us denote $1$ by $a_0$ and $12$ by $b_0$. For $r\geq 1$, let $G_r$ be the graph obtained from $G_{r-1}$ as follows. First, we subdivide the edge
$\{a_{r-1},6\}$ with a new vertex $a_r$ and the edge $\{b_{r-1},13\}$ with a new vertex $b_r$, and then we 
add the edge $\{a_r,b_r\}$. See the right of Figure~\ref{fig:smallrigid} for an illustration.

Let $C$ be the $4$-cycle induced by $\{2,4,10,11\}$. Since $C$ is the unique $4$-cycle not sharing an edge with another $4$-cycle, any automorphism $\varphi\in\Aut(G_r)$ must send $C$ to itself. And since $\{2,11\}$ is the unique edge of $C$ not in a $5$-cycle, $\varphi$ sends $\{2,11\}$ to itself. Suppose that $\varphi(2)=11$, $\varphi(11)=2$. Then $\varphi(4)=10$ and $\varphi(10)=4$, so $\varphi(7)=9$ and $\varphi(0)=5$. But this is a contradiction, because $\varphi$ must send $3$ to a common neighbour of $9$ and $5$. Hence, $\varphi(2)=2$ and $\varphi(11)=11$, and $4$ and $10$ are also fixed. Then, $5$, $7$, $0$ and $9$ are fixed, $3$ and $8$ are also fixed, and $1$, $6$, $12$ and $13$ are also fixed, each time because they are neighbours of a fixed vertex with two fixed neighbours. Continuing like that, we end up seeing that $\varphi$ fixes all the vertices of $G_r$, i.e.~$G_r$ is asymmetric. And we are done: clearly, $G_r$ has $14+2r$ vertices and satisfies the rest of the properties. 
\end{proof}

Let $G$ and $H$ be two graphs. The \emph{Cartesian product} of $G$ and $H$, denoted by $G\mathbin{\square}H$, is the graph with vertex set $V(G)\times V(H)$ where two vertices $(x,u)$ and $(y,v)$ are adjacent if and only if $x=y$ and $\{u,v\}\in E(H)$, or $\{x,y\}\in E(G)$ and $u=v$.

\begin{lemma}\label{lem:quartic}
    For every even $n\geq 14$ there exist a connected, asymmetric, $4$-regular, non-bipartite, and triangle-free graph $G$ on $n$ vertices.
\end{lemma}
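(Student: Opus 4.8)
The plan is to follow the blueprint of Lemma~\ref{lem:cubic} almost verbatim, replacing the cubic base graph by a quartic one. First I would use the computer (Meringer's generation together with the Sage checks mentioned at the start of this section) to produce a connected, asymmetric, $4$-regular, non-bipartite, triangle-free graph $G_0$ on $14$ vertices and record its g6-code. Beyond the five required properties I would insist on a little extra structure in $G_0$: a \emph{unique} small subgraph with a fully rigid automorphism type — for instance a $4$-cycle meeting no other $4$-cycle in an edge, as in Lemma~\ref{lem:cubic}, or a vertex realising some locally unique short-cycle pattern — to serve as an anchor, together with two designated edges $\{a_0,p\}$ and $\{b_0,q\}$ lying \emph{far} from this anchor and from a fixed odd cycle $O_0\subseteq G_0$, at which the enlarging gadget will be grafted.

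A Cartesian-product shortcut such as $H\mathbin{\square}K_2$ with $H$ cubic from Lemma~\ref{lem:cubic} is tempting — it is $4$-regular, triangle-free and non-bipartite — but it always carries the layer-swapping involution and only realises orders $2|V(H)|$, so it cannot give \emph{every} even $n\ge 14$; hence I would build the family by a local $2$-vertex operation instead. Given $G_{r-1}$ with the five properties, form $G_r$ by deleting a fixed small set of designated edges in the ``active region'' near $a_{r-1},b_{r-1}$, adding two new vertices $a_r,b_r$, and reconnecting so that every old vertex keeps degree $4$ while $a_r,b_r$ also get degree $4$; the handshake count forces exactly three deleted old edges together with the new edge $\{a_r,b_r\}$ and six new vertex-to-old edges (or, alternatively, four deleted old edges with $a_r\not\sim b_r$ and eight new edges — either variant works). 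I would pick the deleted edges with pairwise non-adjacent endpoints, no shared endpoints, and disjoint from $O_0$, so that no triangle appears (the only candidates pass through $a_r$ or $b_r$, whose new neighbours are pairwise non-adjacent and unshared), $O_0$ survives so $G_r$ stays non-bipartite, connectivity is preserved, and $G_r$ has $14+2r$ vertices.

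The asymmetry proof for $G_r$ would then run exactly as in Lemma~\ref{lem:cubic}: show that the anchor subgraph is still the unique one of its kind in $G_r$ (the gadget, grafted far away and introducing only controlled short cycles, cannot create a second copy), conclude that every $\varphi\in\Aut(G_r)$ stabilises it setwise, pin down a handful of vertices inside and immediately around the anchor, and then propagate ``$\varphi$ fixes $v$'' through $G_0$ and down the chain $a_1,b_1,\ldots,a_r,b_r$ using the standard step ``if a fixed vertex has all but one of its neighbours fixed, that last neighbour is fixed too'', together with the unique-common-neighbour rule. Iterating until no new vertex is caught should leave all of $V(G_r)$ fixed.

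The main obstacle is precisely this asymmetry step, and within it two points. First, maintaining uniqueness of the anchor for \emph{all} $r$: one must ensure the repeated gadget never accidentally produces another subgraph matching the anchor's defining property, which tightly constrains how ``benign'' the $2$-vertex surgery and the choice of deleted edges must be. Second, pinning down the added chain itself: $a_1,b_1,\ldots,a_r,b_r$ is the most symmetric-looking part of $G_r$ and a priori could admit a reflection, so the gadget must be designed with a built-in asymmetry along the chain (for example the two subdivided edges play visibly different roles, or the two ends of the chain attach to non-isomorphic neighbourhoods), and this asymmetry has to be checked to survive iteration. The remaining verifications — regularity, triangle-freeness, non-bipartiteness, connectivity and the vertex count — are routine once the rewired edges are fixed.
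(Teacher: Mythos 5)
Your proposal is an honest sketch of an approach, but it is not a proof: the two hard ingredients are left undone, and the paper resolves the lemma by a genuinely different construction that you considered a version of and discarded too quickly.

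Concretely, two gaps. First, you never actually construct the $2$-vertex surgery. You derive the correct edge-end count (three deleted old edges, the new edge $\{a_r,b_r\}$, six new-to-old edges), and you state a wish list of constraints on the deleted edges (non-adjacent, disjoint endpoints, off $O_0$), but you never exhibit a base graph $G_0$ on $14$ vertices in which such a configuration exists \emph{and} persists after each application of the surgery, which is exactly what makes the cubic argument in Lemma~\ref{lem:cubic} go through: there the gadget is a plain subdivision and the ``active region'' stays at the two vertices $6$ and $13$ by design. Second, and you say so yourself, the asymmetry argument is left as ``the main obstacle.'' Pinning a unique anchor and propagating fixedness through an iterated $2$-vertex graft with a $3$-edge excision is precisely the content of the lemma; describing the shape of the argument does not discharge it, and it is not obvious that a configuration satisfying all your side constraints simultaneously survives iteration or that the grafted chain has no reflection.

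The paper goes a different way, and the comparison is instructive. It lists explicit g6-codes for every even $14\le n\le 24$. Then, instead of local surgery, it takes one of the two smallest listed graphs $G$, finds an induced matching $e=\{u,v\},\ f=\{w,z\}$ in it, deletes those two edges, and grafts in a ``corridor'' $H_k$, namely $C_4\mathbin{\square}P_k$ with two extra twist edges $\{(2,1),(3,k)\}$ and $\{(3,1),(2,k)\}$, attaching $(1,1),(4,1),(1,k),(4,k)$ to $u,v,w,z$; this adds $4k$ vertices and covers all even $n\ge 26$. You dismissed the Cartesian-product route on the grounds that $H\mathbin{\square}K_2$ carries a layer-swapping involution and fixes the parity of $n$, both valid objections to \emph{that} product, but the paper's corridor is a product with a path of variable length $k$ (giving $4k$ growth, which combined with base graphs on $14$ and $16$ vertices covers all even orders), and the corridor is deliberately symmetric on its own: the asymmetry comes from grafting it into an asymmetric base graph $G$ whose four attachment vertices carry the only edges of $G_k$ not lying on a $C_4$, which is what forces any automorphism to stabilise the corridor and ultimately restrict to the identity on $G$. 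So the paper buys asymmetry ``for free'' from the base graph rather than having to re-establish it after each surgery step, which is exactly the difficulty your approach runs into.
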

\begin{proof}
    Here are g6-codes for graphs on $n$ vertices satisfying the claim for every even $14\leq n\leq 24$: 
    \begin{itemize}[leftmargin=100pt,topsep=0pt,itemsep=0pt]
        \item[$n=14$:] \texttt{Ms\`{ }rQ\_gC?Q\_e?b?[\_}
        \item[$n=16$:] \texttt{Os\`{ }raOgCOW?O?O?L\_Do?\{}
        \item[$n=18$:] \texttt{Qs\`{ }raOgE?I?S?O?I?Ao?e?AK?FG}
        \item[$n=20$:] \texttt{Ss\`{ }raOgE?J?W?G?C\_A??Q?@g?Co?D\_?A[}
        \item[$n=22$:] \texttt{Us\`{ }AA?cG\`{ }AA\_CgCS@\`{ }?S?AO??\_O?gW?W\_?AH??XG}
        \item[$n=24$:] \texttt{Ws\`{ }AA?cG\`{ }AA\_CgCO@\_?S?AW??S??\_O?WC?GS??h??BD??II}
    \end{itemize}
    Denote by $A,B$ the two smallest graphs of that list and let $G\in\{A,B\}$. One can check (e.g., by computer) that $G$ has an induced matching  $e=\{u,v\},f=\{w,z\}$ of order $2$ and that every pair of incident edges in $G$ is contained in a $C_4$. 
    
    For a positive integer $k$ denote by $H_k$ the graph obtained from $C_4\mathbin{\square} P_k$ by adding the edges $\{(2,1),(3,k)\}$ and $\{(3,1),(2,k)\}$. Denote by $G_k$ the graph obtained from $H_k\cup G\setminus\{e,f\}$ by introducing the edges $\{u,(1,1)\}$, $\{v,(4,1)\}$, $\{w,(1,k)\}$, $\{z,(4,k)\}$.
    \begin{claim}\label{claim:Gk}
        For every $k\geq 3$, the graph $G_k$ is a connected, asymmetric, $4$-regular, non-bipartite, and triangle-free graph.
    \end{claim}
    \noindent\textit{Proof of Claim~\ref{claim:Gk}.} All properties follow immediately except asymmetry, so assume that $\varphi \in \Aut(G_k)$ is a non-identity automorphism. It is easy to check that every edge of $G\setminus\{e,f\}$ and of $H_k$ is contained in a $C_4$, but none of the edges $\{u,(1,1)\}$, $\{v,(4,1)\}$, $\{w,(1,k)\}$, $\{z,(4,k)\}$ is. Hence, $\varphi$ permutes their end vertices $u$, $(1,1)$, $v$, $(4,1)$, $w$, $(1,k)$, $z$, $(4,k)$. Indeed, $\varphi$ permutes $u,v,w,z$ and separately $(1,1),(4,1),(1,k),(4,k)$. Otherwise, the edges $\{(1,1),(4,1)\}$ and $\{(1,k),(4,k)\}$ cannot be mapped to edges since $u,v,w,z$ form an independent set in $H_k\cup G\setminus\{e,f\}$. Thus, $\varphi$ restricts to $\varphi'\in \Aut(H_k)$. By the above since $\varphi$ is a non-identity automorphism, we know that $\varphi'$ also is a non-identity automorphism. If $\varphi'$ fixes $\{(1,1),(4,1),(1,k),(4,k)\}$, then also $\{(2,1),(3,1),(2,k),(3,k)\}$ are fixed and from this one can deduce that $V(C_4)\times \{i\}$ is fixed for all $i\in[k]$. Hence, $\varphi'$ permutes $\{(1,1),(4,1),(1,k),(4,k)\}$ and since $\{(1,1),(4,1)\}$, $\{(1,k),(4,k)\}$ are two independent edges and by the shape of $H_k$ we see that either flips both these edges or none of them and then eventually exchanges one with the other. In either case, this implies that the restriction of $\varphi$ to $G$ yields a non-identity element $\varphi''\in \Aut(G)$. Contradiction.
    \qedblack

    Since $H_k$ has order $4k\geq 12$ with Claim~\ref{claim:Gk} and the existence of the graphs $A,B$ we can establish the statement of the lemma for every even $n\geq 26$.
\end{proof}

\smallrigid*
\begin{proof}
The result follows from Observation~\ref{obs:cubic} for $d=3$. The lower bound follows by $\mu(d)\leq \nu(d)$ and~\cite{BI69} for $d\geq 4$. For $4\leq d\leq 8$, we find rigid graphs of the claimed order by computer. Here is a list:
\begin{itemize}[leftmargin=100pt,topsep=0pt,itemsep=0pt]
    \item[$d=4$:] \texttt{I\}hP?sM@w}
    \item[$d=5$:] \texttt{I\}qr@s]Bw}
    \item[$d=6$:] \texttt{J\textasciitilde  zcqgjDw\^\_}
    \item[$d=7$:] \texttt{K\textasciitilde\textasciitilde edXUHwv\`{}\textasciitilde}
    \item[$d=8$:] \texttt{K\textasciitilde\textasciitilde vUefRxzb\textasciitilde}
\end{itemize}

For the remaining values $d>8$ if $d$ is even take an asymmetric, $3$-regular, non-bipartite, and triangle-free graph $G$ on $n$ vertices with Lemma~\ref{lem:cubic}. If $d$ is odd, then take an asymmetric, $4$-regular, non-bipartite, and triangle-free graph $G$ on $n$ vertices, with Lemma~\ref{lem:quartic}. By Lemma~\ref{lem:complement} the graph $\overline{G}$ satisfies the the hypothesis of the theorem.
\end{proof}

\section{Many rigid regular graphs}

Here and in Section~5, the strategies towards Theorems~\ref{thm:regular_rigid_family} and~\ref{teo:babaipultr} both follow the same principle: first, one solves a version of the problem for digraphs (or for binary relational systems), and then, one tries to obtain graphs with similar properties. The first step is performed in Section~\ref{sec:directed_gadgets}, and the transition from binary relational systems to graphs, in Section~\ref{sec:sip}. Essentially, this transition consists in replacing arcs by undirected gadgets. The gadgets for the case $d=3$ are constructed in Section~\ref{sec:tiling_factors}. The gadgets for the higher degree cases are constructed in Section~\ref{sec:mutually_rigid}, using a graph product described in Section~\ref{sect:cartesian}. These gadgets are then used to prove Theorem~\ref{thm:regular_rigid_family}, at the end of the section.

\subsection{Rigid digraphs with restrictions on the degrees}\label{sec:directed_gadgets}

A \emph{transitive tournament} $T$ is the transitive closure of a directed path. We denote by $v_i(T)$ the $i$th vertex of that path ($v_1(T)$ is the source itself).


The following is inspired by a construction of~\cite{HN73} and illustrated in Figure~\ref{fig:lemma}. Let $d,\ell\in\mathbb Z^+$ with $d\geq 3$ and let $f:[\ell]\times([d-1]\backslash\{1\})\rightarrow\{+,0,-\}$ be a mapping. We define a directed graph $S(d,\ell,f)$ as follows. Consider $2\ell$ copies of the transitive tournament on $d$ vertices, that we refer to as $T_+^1,\ldots,T_+^{\ell},T_-^1,\ldots,T_-^{\ell}$, two copies $T_l$ and $T_r$ of the transitive tournament on $d-1$ vertices, and four vertices $s_l,t_l,s_r,t_r$. We assume that these vertices, together with all the vertices of the tournaments, are pairwise distinct. Now we set:
\begin{align*}
V(S(d,\ell,f))\,=\;&\bigcup_{i=1}^{\ell} (V(T_+^i)\cup V(T_-^i))\,\cup\, V(T_r)\,\cup\, V(T_l)\,\cup\,\{s_r,t_r,s_l,t_l\},\\
A(S(d,\ell,f))\,=\;&\bigcup_{i=1}^{\ell} (A(T_+^i)\cup A(T_-^i))\,\cup\, A(T_r)\,\cup\, A(T_l) \\ 
&\cup\,\{(s_l,x),(x,t_l)\mid x\in V(T_l)\}\cup\{(s_r,x),(x,t_r)\mid x\in V(T_r)\} \ \phantom{\bigcup_{i=1}^{\ell}}\\
&\cup\,\{(v_d(T_+^1),s_l),(t_l,v_1(T_-^1)),(v_d(T_-^{\ell}),s_r),(t_r,v_1(T_+^{\ell}))\} \ \phantom{\bigcup_{i=1}}\\
&\cup\,\bigcup_{i=1}^{\ell-1}\{(v_d(T_+^{i+1}),v_1(T_+^i)),(v_d(T_-^i),v_1(T_-^{i+1}))\} \\
&\cup\,\bigcup_{i=1}^{\ell}\{(v_j(T_+^i),v_{d+1-j}(T_-^i))\mid 2\leq j\leq d-1,\ f(i,j)=-\} \\
&\cup\,\bigcup_{i=1}^{\ell}\{(v_{d+1-j}(T_-^i),v_{j}(T_+^i))\mid 2\leq j\leq d-1,\ f(i,j)=+\}.
\end{align*}

\begin{figure}[h]
\centering
\includegraphics[scale=1]{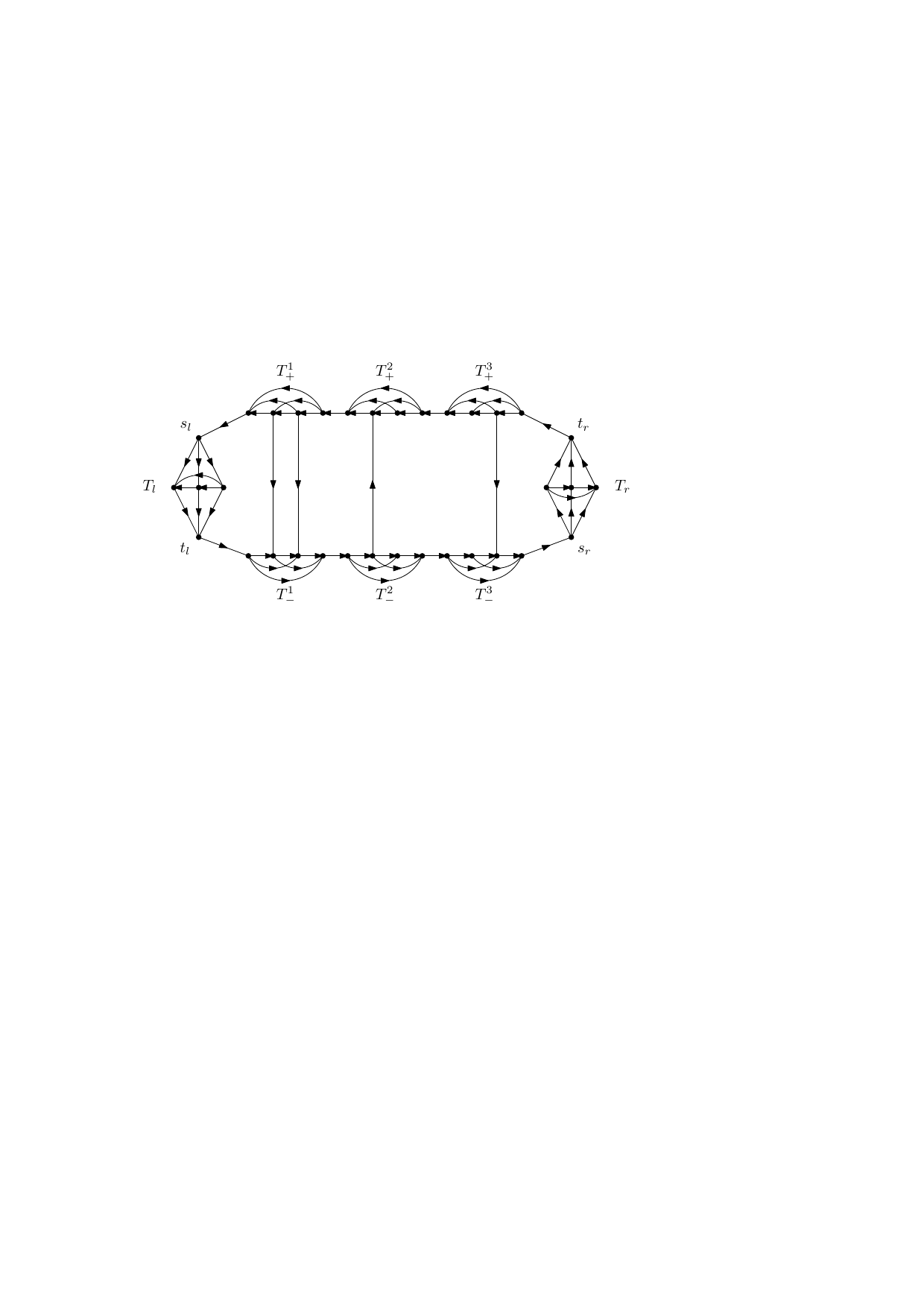}
\caption[A depiction of $S(4,3,f)$]{$S(4,3,f)$ for $f$ given by  
{\small
\begin{tabular}{c|c|c|c|c|c|c}
$(i,j)$ & $(1,3)$ & $(1,2)$ & $(2,3)$ & $(2,2)$ & $(3,3)$ & $(3,2)$ \\\hline
$f(i,j)$ & $-$ & $-$ & $+$ & $0$ & $0$ & $-$ \\
\end{tabular}}.}
\label{fig:lemma}
\end{figure}

Note that the total degree of every vertex of $S(d,\ell,f)$ is $d$, except for the vertices of the form $v_j(T^i_+)$ and $v_{d+1-j}(T^i_-)$ with $1\leq i\leq\ell$, $2\leq j\leq d-1$ and $f(i,j)=0$, which have total degree $d-1$.

When we consider various digraphs of the form $S(d,\ell,f)$ together (not necessarily with the same parameters $d,\ell,f$), and $D$ is one of them, we denote by $T^1_+(D),\ldots,T^{\ell}_+(D)$, $T^1_-(D),\ldots,T^{\ell}_-(D)$, $T_l(D)$, $T_r(D)$, $s_l(D)$, $t_l(D)$, $s_r(D)$, $t_r(D)$ the corresponding instances of $T^1_+,\ldots,T^{\ell}_+$, $T^1_-,\ldots,T^{\ell}_-$, $T_l$, $T_r$, $s_l$, $t_l$, $s_r$, $t_r$ in $D$.

Let us define an order $\leq$ on the set of mappings from $[\ell]\times([d-1]\backslash\{1\})$ to $\{+,0,-\}$ as follows. If $f,f'$ are two such mappings, then $f\leq f'$ if and only if, for every $(i,j)\in[\ell]\times([d-1]\backslash\{1\})$, either $f(i,j)=0$ or $f(i,j)=f'(i,j)$. We also define a new mapping $\mathrm{inv}f:[\ell]\times([d-1]\backslash\{1\})\rightarrow\{+,0,-\}$ from $f$ by setting $(\mathrm{inv}f)(i,j)=-f(\ell+1-i,d+1-j)$.

\begin{lemma}\label{lem:sausages} Let $d,\ell,\ell'\in\mathbb Z^+$ with $d\geq 3$ and let $f:[\ell]\times([d-1]\backslash\{1\})\rightarrow\{+,0,-\}$ and $f':[\ell']\times([d-1]\backslash\{1\})\rightarrow\{+,0,-\}$. Then, $$|\mathrm{Hom}(S(d,\ell,f),S(d,\ell',f'))|=\begin{cases}2 &\text{if $\ell=\ell'$, $f\leq f'$ and $\mathrm{inv}f\leq f'$} \\ 1 &\text{otherwise, if $\ell=\ell'$ and either $f\leq f'$ or $\mathrm{inv}f\leq f'$} \\ 0 &\text{in the rest of the cases.}\end{cases}$$
\end{lemma}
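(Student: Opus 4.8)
The plan is to analyze an arbitrary homomorphism $\varphi:S(d,\ell,f)\to S(d,\ell',f')$ by first pinning down its behaviour on the ``rigid skeleton'' common to all these digraphs — the long cyclic chain of transitive tournaments $T^i_\pm$ glued through the bottleneck gadgets built around $T_l,T_r$ and the four special vertices $s_l,t_l,s_r,t_r$ — and only afterwards reading off the constraint that the cross-arcs governed by $f$ and $f'$ impose. The key structural facts I expect to use repeatedly are: (i) a transitive tournament on $k$ vertices is rigid and, more importantly, any homomorphic image of it inside another such digraph is forced by counting the lengths of directed paths; (ii) the vertices of total degree $d-1$ (those $v_j(T^i_\pm)$ with $f(i,j)=0$) are distinguishable from the degree-$d$ vertices, so $\varphi$ cannot be too wild; and (iii) the ``articulation'' structure at $s_l,t_l,s_r,t_r$ (each $s$ dominates all of a $T_{l/r}$, each $t$ is dominated by all of it) is a local configuration that survives only in limited ways.

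First I would show that $\varphi$ maps $V(T_l)\cup\{s_l,t_l\}$ into $V(T_l(S'))\cup\{s_l(S'),t_l(S')\}$ and similarly for the $r$-side, and that it respects the orientation of the whole cyclic chain; the point is that $s_l$ is the unique vertex with out-neighbourhood a transitive tournament on $d-1$ vertices reachable in one step and feeding into a common sink $t_l$, so its image is forced, and then the in/out-structure propagates along the unique long directed cycle through all the $T^i_\pm$. Because the chain has the same combinatorial length $\ell$ on both sides, this immediately yields $\ell=\ell'$ whenever a homomorphism exists (giving the $0$ case when $\ell\ne\ell'$), and it shows $\varphi$ is essentially one of two ``rigid motions'': either the identity-type map sending $T^i_\pm(S)$ to $T^i_\pm(S')$ vertex-by-vertex, or the ``antipodal'' map sending $T^i_+(S)$ to $T^{\ell+1-i}_-(S')$ and reversing indices, which is exactly where the $\mathrm{inv}$ operation comes from. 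Once $\varphi$ is known to be one of these two skeleton maps, a cross-arc $(v_j(T^i_+),v_{d+1-j}(T^i_-))$ present because $f(i,j)=-$ must map to an arc of $S'$; under the identity-type map this says $f'(i,j)\in\{-\}$ is consistent precisely when $f(i,j)=0$ or $f(i,j)=f'(i,j)$, i.e.\ $f\le f'$, and under the antipodal map it translates into $\mathrm{inv}f\le f'$. Conversely, when $f\le f'$ the identity-type map is a genuine homomorphism (the only new arcs to check are the cross-arcs, and $f\le f'$ guarantees each is matched), and when $\mathrm{inv}f\le f'$ the antipodal map works; these two maps coincide only in degenerate situations that do not occur here, so they are distinct, giving the count $2$, $1$, or $0$ exactly as stated.

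The main obstacle, I expect, is step one: rigorously forcing the image of the bottleneck gadgets and then bootstrapping along the chain without the argument ballooning into a long case analysis on where individual tournament vertices can go. The degree bookkeeping is delicate because the degree-$(d-1)$ vertices sit inside the tournaments and a homomorphism (not required to be injective) could a priori fold several vertices together; I would handle this by showing that any such folding would either create a vertex of too-high out-degree in the image or destroy one of the forced long directed paths, using that all the relevant digraphs are acyclic except for the single long cycle. A secondary technical point is verifying that the antipodal map is well-defined and arc-preserving on the internal tournament arcs $A(T^i_\pm)$ — this is where the reversal $v_j\mapsto v_{d+1-j}$ together with swapping $+\leftrightarrow-$ has to interact correctly with the transitive orientation — but this is a routine check once the definitions of $\mathrm{inv}f$ and the order $\le$ are unwound.
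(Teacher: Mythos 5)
Your overall plan tracks the paper's own proof quite closely: you first pin down how the bottleneck gadgets $\{s_l,t_l\}\cup V(T_l)$ and $\{s_r,t_r\}\cup V(T_r)$ can map (concluding $\varphi(s_l)\in\{s_l(D'),s_r(D')\}$, giving the two candidate ``rigid motions''), then propagate along the chain of tournaments, then let the cross-arcs enforce $f\le f'$ or $\mathrm{inv}f\le f'$. That is exactly the paper's strategy. However, two technical points you flag as obstacles are handled in ways that would not actually close the argument.

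First, your proposed mechanism for ruling out folding is wrong. You suggest that folding vertices of a tournament ``would either create a vertex of too-high out-degree in the image or destroy one of the forced long directed paths.'' Neither of these is a valid constraint for homomorphisms: a homomorphism $\varphi$ can have $|\varphi^{-1}(w)|$ arbitrarily large without any contradiction about degrees in the (fixed) target, and directed paths need not map injectively. The observation the paper relies on is simpler and of a different nature: because every pair of vertices of a transitive tournament is arc-adjacent, and the target is loop-free, $\varphi$ restricted to each tournament $T$ of $D$ must be injective, hence must send $T$ to a tournament of $D'$ of the same order. This single loop-freeness fact is what forces tournaments to tournaments, $s_l$ to $s_l$ or $s_r$, and so on; the degree-$(d-1)$ versus degree-$d$ distinction you mention plays no role (and in general could not, since homomorphisms do not preserve total degree).

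Second, you assert that the chain lengths give $\ell=\ell'$ ``immediately,'' but this step needs an explicit argument. The paper first gets $\ell\ge\ell'$ (since the image of the left chain of $\ell$ tournaments must land in the $\ell'$ tournaments of $D'$ and, because of $\varphi(t_r(D))\in\{t_l(D'),t_r(D')\}$, cannot overshoot), and then rules out $\ell>\ell'$ by a pointed contradiction: the in-neighbour $x$ of $v_1(T_+^{\ell'+1}(D))$ has closed in-neighbourhood a transitive tournament of order $d$, but $\varphi(x)=s_r(D')$ would have closed in-neighbourhood just a single arc. Also, a small inaccuracy: the digraphs $S(d,\ell,f)$ need not be acyclic except for ``the single long cycle,'' as the cross-arcs can create further directed cycles; fortunately the paper's argument does not lean on acyclicity. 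With the loop-freeness observation substituted for the degree bookkeeping and the $\ell=\ell'$ step made precise, your outline becomes essentially the paper's proof.
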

\begin{proof} Put $D=S(d,\ell,f)$, $D'=S(d,\ell',f')$. Due to the absence of loops, any homomorphism $\varphi:D\rightarrow D'$ must map tournaments of $D$ to tournaments of $D'$ of the same order. Since all vertices of $T_l(D)$ have a common out-neighbour, the tournament induced by $\{s_l(D)\}\cup V(T_l(D))$ must be mapped to $\{s_l(D')\}\cup V(T_l(D'))$ or to $\{s_r(D')\}\cup V(T_r(D'))$ in the unique possible way. In particular, $\varphi(s_l(D))\in\{s_l(D'),s_r(D')\}$, and similarly $\varphi(t_r(D))\in\{t_l(D'),t_r(D')\}$.

Suppose that $\varphi(s_l(D))=s_l(D')$. Then this forces $\varphi(V(T_+^1(D)))=V(T_+^1(D'))$ in the unique possible way, and recursively $\varphi(V(T_+^i(D)))=V(T_+^i(D'))$ in the unique possible way, for $2\leq i\leq\ell'$: since $\varphi(t_r(D))\in\{t_l(D'),t_r(D')\}$ we know that $\ell\geq\ell'$. Suppose for a contradiction that $\ell>\ell'$. Let $x$ be the in-neighbour of $v_1(T_+^{\ell'+1}(D))$; we know that the closed in-neighbourhood of $x$ is a transitive tournament of order $d$. By assumption, $\varphi(V(T_+^{\ell'+1}(D)))=\{t_r(D')\}\cup V(T_r(D'))$ in the unique possible way, so $\varphi(x)=s_r(D')$. But the closed in-neighbourhood of $s_r(D')$ is just an arc, the desired contradiction. Hence $\ell=\ell'$, and $\varphi(\{t_r(D),s_r(D)\}\cup V(T_r(D)))=\{t_r(D'),s_r(D')\}\cup V(T_r(D'))$ in the unique possible way. It follows that $\varphi(v_j(T_-^i(D)))=v_j(T_-^i(D'))$ for every $1\leq i\leq\ell$ and $1\leq j\leq d$, so, if it exists, $\varphi$ is completely determined. And $\varphi$ exists if and only if $f\leq f'$.

In the case that $\varphi(s_l(D))=s_r(D')$, a symmetric argument shows that there exists at most one such $\varphi$, and that $\varphi$ exists if and only if $\ell=\ell'$ and $\mathrm{inv}f\leq f'$.
\end{proof}

With Lemma~\ref{lem:sausages} one can construct digraphs with certain properties that will be needed further ahead. More precisely, one can set 
 $\mathcal F_1(d)=\{S(d,\ell,+_{d,\ell})\mid\ell\geq 1\}$, where $+_{d,\ell}:[\ell]\times([d-1]\backslash\{1\})\rightarrow\{+\}$ is the constant positive mapping, and $\mathcal F_2(d)=\{S(d,\ell,+'_{d,\ell})\mid\ell\geq 2\}$, where $+'_{d,\ell}:[\ell]\times([d-1]\backslash\{1\})\rightarrow\{+,0\}$ sends $(1,d-1)$ to $0$ and everything else to $+$. We obtain:

\begin{remark}\label{rem:sausages}
    For each $d\geq 3$, two infinite families $\mathcal F_1(d)$ and $\mathcal F_2(d)$ of mutually rigid, connected, oriented graphs of minimum indegree and outdegree at least 1, satisfying the following properties.
\begin{itemize}
   \item[(1)] For every $D\in\mathcal F_1(d)\cup\mathcal F_2(d)$, every vertex of $D$ is on an oriented triangle;
   \item[(2)] for every $D\in\mathcal F_1(d)$, all vertices of $D$ have total degree $d$;
   \item[(3)] for every $D\in\mathcal F_2(d)$, there are two vertices $u,v$ of $D$ that have total degree $d-1$ while the rest have total degree $d$; moreover, any oriented path between $u$ and $v$ has length at least $3$.
\end{itemize}
\end{remark}

\subsection{The \v s\'ip product}\label{sec:sip}

This section covers a standard technique, in a form that suits our purposes. See~\cite{HN73,HN05,KP21} for three of many examples of its use, or~\cite[Section 4.4]{HN04} for a more detailed account. The idea is the following: given a class of binary relational systems, we aim to replace the arcs by undirected gadgets, in a way that the resulting graph class exhibits the same behaviour from the point of view of homomorphisms. This problem has a natural formulation in the language of categories (we refer the unfamiliar reader to~\cite{AHS2004} or~\cite{PT80} for the basic definitions missing here). Indeed, the category-theoretical framework is interesting in its own right, and has been amply explored, see~\cite[Section 4.9 and prec.]{HN04} and~\cite{PT80}.

An \emph{indicator} (also called a \emph{\v s\'ip}) is a digraph $S$ with a distinguished pair of vertices $(\mathrm{in}\,S,\mathrm{out}\,S)$ with $\mathrm{in}\,S\neq\mathrm{out}\,S$. A \emph{$d$-indicator} is an anti-reflexive symmetric indicator (i.e.~a graph) such that $\deg(\mathrm{in}\,S)=\deg(\mathrm{out}\,S)=d-1$ and $\deg(v)=d$ for every other vertex $v$. Consider a binary $[k]$-system $D$ and a $k$-tuple of indicators $\mathbf S=(S_1,\ldots,S_k)$. For every $i\in[k]$ and $a\in A_i(D)$, let $S_{i,a}$ be a copy of $S_i$. Assume that the vertex sets $V(S_{i,a})$, together with $V(D)$ and the sets $V(S_i)$, are pairwise disjoint. For every vertex $x$ of $S_i$ or of $S_{i,a}$, let $x_{a'}$ be its corresponding clone in $S_{i,a'}$. We define the \emph{\v s\'ip product} $D*\mathbf S$ to be the digraph with vertex set 
$$V(D)\,\cup\underset{\substack{i\in[k] \\ a\in A_i(D)}}{\bigcup}V(S_{i,a})$$ 
and arc set 
$$\{(x,\mathrm{in}\,S_{i,(x,y)}),(\mathrm{in}\,S_{i,(x,y)},x),(\mathrm{out}\,S_{i,(x,y)},y),(y,\mathrm{out}\,S_{i,(x,y)})\mid i\in[k],\ (x,y)\in A_i(D)\}$$$$\cup\underset{\substack{i\in[k] \\ a\in A_i(D)}}{\bigcup} A(S_{i,a}).$$
When $\mathbf S=(S_1)$ is a $1$-tuple, we also write $D*S_1$ for $D*\mathbf S$. Note that if all indicators $S_1,\ldots,S_k$ are graphs, then $D*\mathbf S$ is also a graph.

We will also use a second version of the \v s\'ip product: $D\mathbin{\vec *}\mathbf S$ denotes the digraph with vertex set $V(D*\mathbf S)$ and arc set 
$$\{(x,\mathrm{in}\,S_{i,(x,y)}),(\mathrm{out}\,S_{i,(x,y)},y)\mid i\in[k],\ (x,y)\in A_i(D)\}\ \cup\underset{\substack{i\in[k] \\ a\in A_i(D)}}{\bigcup} A(S_{i,a}).$$
In this case, when all indicators $S_1,\ldots,S_k$ are oriented graphs, $D\mathbin{\vec *}\mathbf S$ is also an oriented graph. See Figure~\ref{fig:sip_products} for an illustration of the two versions.
\begin{figure}[h]
\centering
\includegraphics[scale=0.83]{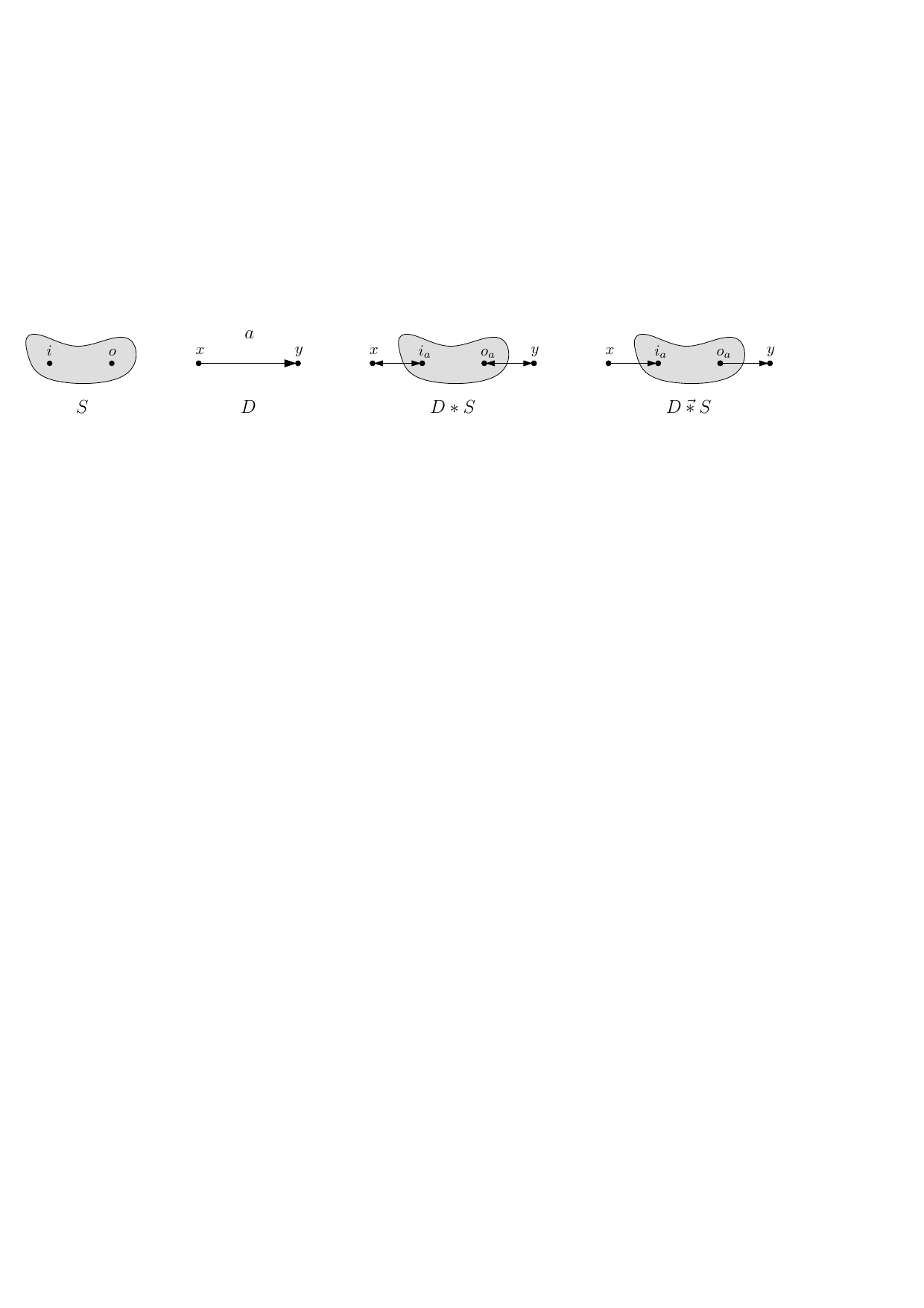}
\caption{An indicator $S$ with distinguished pair of vertices $(\mathrm{in}\,S,\mathrm{out}\,S)=(i,o)$, an arc $a=(x,y)$ in a binary $\{1\}$-system (i.e.~a digraph) $D$, and the gadgets replacing $a$ in $D\mathbin * S$ and in $D\mathbin{\vec *} S$.}
\label{fig:sip_products}
\end{figure}

The class of all binary $[k]$-systems, together with all homomorphisms between them, forms a category; we are going to call it $k\text{-}\mathsf{Systems}$. Let $k\text{-}\mathsf{Systems}'$ denote the subcategory of $k\text{-}\mathsf{Systems}$ consisting in all binary $[k]$-systems with minimum indegree and outdegree at least $1$, and all homomorphisms between them. Given a $k$-tuple of indicators $\mathbf S$, let $k\text{-}\mathsf{Systems}'*\mathbf S$ (resp.~$k\text{-}\mathsf{Systems}'\mathbin{\vec *}\mathbf S$) be the category consisting of all the objects of the form $D*\mathbf S$ (resp.~$D\mathbin{\vec *}\mathbf S$), where $D$ is an object of $k\text{-}\mathsf{Systems}'$, and all homomorphisms between them.

Recall that two categories $A$ and $B$ are \emph{equivalent} if there exists a functor $F:A\rightarrow B$ which is full, faithful, and satisfies that, for each object $b$ of $B$, there is an object $a$ of $A$ such that $F(a)$ and $b$ are isomorphic. 

\begin{lemma}\label{lem:sip_product_and_categories} Let $k$ be a positive integer and $\mathbf S=(S_1,\ldots,S_k)$ a $k$-tuple of indicators. Assume that
\vspace{-1mm}
\begin{enumerate}[label=(\roman*)]
   \item $S_1,\ldots,S_k$ are connected and mutually rigid, that
   \item there is an odd positive integer $g$ such that every $S_1,\ldots,S_k$ has an oriented cycle of length $g$ and no shorter oriented cycles of odd length, and that for every $i\in[k]$
   \item every vertex of $S_i$ is on an oriented cycle of length $g$;
   \item the length of any bidirected path between $\mathrm{out}\, S_i$ and $\mathrm{in}\, S_i$ is at least $3$;
   \item any oriented path between $\mathrm{out}\, S_i$ and $\mathrm{in}\, S_i$ of odd length is of length at least $g$.
\end{enumerate}
\vspace{-1mm}
Then, $k\text{-}\mathsf{Systems}'$ and $k\text{-}\mathsf{Systems}'*\mathbf S$ are equivalent categories; and if (iv) is replaced by
\vspace{-1mm}
\begin{itemize}
   \item[(iv')] the length of any directed path from $\mathrm{out}\, S_i$ to $\mathrm{in}\, S_i$ is at least $3$,
\end{itemize}
\vspace{-1mm}
then $k\text{-}\mathsf{Systems}'$ and $k\text{-}\mathsf{Systems}'\mathbin{\vec *}\mathbf S$ are equivalent categories.
\end{lemma}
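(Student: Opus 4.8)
The plan is to define the obvious functor $F\colon k\text{-}\mathsf{Systems}'\to k\text{-}\mathsf{Systems}'*\mathbf S$ on objects by $F(D)=D*\mathbf S$ and on a homomorphism $\psi\colon D\to D'$ by sending $\psi$ to the map $F(\psi)$ that acts as $\psi$ on $V(D)\subseteq V(D*\mathbf S)$ and, for each arc $a=(x,y)\in A_i(D)$, carries the copy $S_{i,a}$ of $S_i$ isomorphically onto the copy $S_{i,\psi(a)}$ (here $\psi(a)=(\psi(x),\psi(y))\in A_i(D')$, using that $\psi$ is a homomorphism), via the canonical clone bijection $x\mapsto x_{\psi(a)}$. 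One checks directly from the definition of $D*\mathbf S$ that $F(\psi)$ is a homomorphism and that $F$ is functorial (identities to identities, composition to composition). Essential surjectivity on objects is immediate since every object of $k\text{-}\mathsf{Systems}'*\mathbf S$ is by definition of the form $D*\mathbf S$. So the whole content is: \emph{$F$ is full and faithful}, i.e.\ for all $D,D'$ the map $F\colon\mathrm{Hom}(D,D')\to\mathrm{Hom}(D*\mathbf S,D'*\mathbf S)$ is a bijection. The analogous statement for $\mathbin{\vec *}$ is handled by the same argument with (iv) replaced by (iv').

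Faithfulness is trivial: $F(\psi)$ restricted to $V(D)$ is $\psi$, so $F$ is injective. The real work is fullness. Given $\Phi\in\mathrm{Hom}(D*\mathbf S,D'*\mathbf S)$, the plan is to show that (a) $\Phi(V(D))\subseteq V(D')$, (b) for each arc $a=(x,y)\in A_i(D)$ the gadget copy $S_{i,a}$ is mapped into some gadget copy $S_{j,b}$ of $D'*\mathbf S$ with $\mathrm{in}\,S_{i,a}\mapsto\mathrm{in}\,S_{j,b}$, $\mathrm{out}\,S_{i,a}\mapsto\mathrm{out}\,S_{j,b}$, whence by the mutual rigidity hypothesis (i) we get $j=i$ and the restriction of $\Phi$ to $S_{i,a}$ is the canonical clone isomorphism onto $S_{i,b}$. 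Then setting $\psi=\Phi|_{V(D)}$, the compatibility of $\Phi$ with the connecting arcs $x-\mathrm{in}\,S_{i,(x,y)}$ and $\mathrm{out}\,S_{i,(x,y)}-y$ forces $b=(\psi(x),\psi(y))$, so in particular $\psi$ sends $i$-arcs to $i$-arcs, i.e.\ $\psi\in\mathrm{Hom}(D,D')$, and $\Phi=F(\psi)$.

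The main obstacle is proving (a) and (b), i.e.\ that $\Phi$ cannot "smear" a gadget across the boundary: a gadget vertex could a priori be sent to an original vertex of $D'$, or one gadget could be sent partly into one gadget and partly into another. This is where hypotheses (ii)--(v) enter. Condition (iii) says every vertex of $S_i$ lies on an oriented $g$-cycle inside $S_i$; since the only odd closed walks in $D'*\mathbf S$ of length $<g$... more precisely, every short odd closed walk must be confined to a single gadget copy — this uses (ii) (no odd oriented cycle shorter than $g$ inside a gadget) together with the fact that any closed walk of $D'*\mathbf S$ meeting two distinct gadget copies, or meeting $V(D')$, must traverse a connecting $2$-path through an $\mathrm{in}$ or $\mathrm{out}$ vertex, and reentering a gadget along an odd/even route is governed by (iv)/(iv') and (v). So the image of an oriented $g$-cycle through a gadget vertex is an odd closed walk of length $g$, which must live inside one gadget copy; as every vertex of $S_i$ is on such a cycle, the whole of $S_{i,a}$ lands in one gadget copy $S_{j,b}$, and connectedness (i) plus the degree/indicator structure pins down that $\mathrm{in}$ goes to $\mathrm{in}$ and $\mathrm{out}$ to $\mathrm{out}$ (using that the connecting vertices $x\in V(D)$ have their gadget-neighbours among $\mathrm{in}/\mathrm{out}$ vertices only, and that a homomorphism maps the $2$-paths $x-\mathrm{in}\,S-\cdots$ appropriately). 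The bookkeeping distinguishing the $*$ case (bidirected connecting $2$-paths, hypothesis (iv) about bidirected paths) from the $\mathbin{\vec *}$ case (directed connecting arcs, hypothesis (iv') about directed paths) is routine once this core confinement claim is in place. I would organize the argument as a sequence of claims: \emph{Claim 1} every $g$-cycle of $D'*\mathbf S$ lies in a single gadget copy; \emph{Claim 2} hence each $S_{i,a}$ maps into a single gadget copy compatibly with $\mathrm{in},\mathrm{out}$; \emph{Claim 3} therefore $\Phi$ restricts to $\psi\in\mathrm{Hom}(D,D')$ with $\Phi=F(\psi)$; and finally assemble fullness and faithfulness.
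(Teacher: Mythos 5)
Your plan follows the paper's proof almost step for step: define the clone functor $F$, observe faithfulness and essential surjectivity are immediate, and reduce to fullness, which is established by confining images of gadgets via odd $g$-cycles (using (ii)--(iii)), then invoking connectedness and mutual rigidity to pin down the restriction, and finally using (iv)/(iv$'$) to rule out vertices of $V(D)$ landing inside a gadget. The one step you leave implicit that the paper makes crucial use of is exactly why $\Phi(V(D))\subseteq V(D')$: it is here that the restriction to $k\text{-}\mathsf{Systems}'$ (minimum in- and outdegree $\geq 1$) is needed, so that each $x\in V(D)$ has both an incoming and an outgoing arc, forcing the neighbouring gadget copies to be mapped into a single target gadget and producing a bidirected (resp.\ directed) path of length $2$ from $\mathrm{out}\,S_i$ to $\mathrm{in}\,S_i$ through $\Phi(x)$ — contradicting (iv) (resp.\ (iv$'$)); you would want to spell that out when fleshing out the plan.
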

\begin{proof} We prove the first part of the statement; the proof works for the second part as well. Let $D$ and $D'$ be binary $[k]$-systems and let $\psi\in\mathrm{Hom}(D*\mathbf S,D'*\mathbf S)$. Note that shortest oriented odd cycles of $D*\mathbf S$ have length $g$. Moreover, there is no vertex of $V(D)$ on such a cycle, while the rest of vertices of $D*\mathbf S$ are. Thus $\psi(V(S_{i,a}))\cap V(D')=\emptyset$ for every $i\in[k]$ and $a\in A_i(D)$. Furthermore, by connectivity, $\psi(V(S_{i,a}))\subseteq V(S_{i',a'})$ for some $i'\in[k]$ and $a'\in A_{i'}(D')$. By the mutual rigidity, $i=i'$ and $\psi(x_a)=x_{a'}$ for every vertex $x$ of $S_i$. Let us see now that $\psi(V(D))\subseteq V(D')$. Suppose for a contradiction that $\psi(x)\in V(S_{i,a})$ for some $x\in V(D)$, $i\in[k]$ and $a\in A_i(D')$. We know that there exist $i^+,i^-\in[k]$ and $y^+,y^-\in V(D)$ with $(y^+,x)\in A_{i^+}(D)$ and $(x,y^-)\in A_{i^-}(D)$. Then $\psi(V(S_{i^+,(y^+,x)})),\psi(V(S_{i^-,(x,y^-)}))\subseteq V(S_{i,a})$, so $i=i^+=i^-$, $\psi((\mathrm{out}\,S_i)_{(y^+,x)})=(\mathrm{out}\,S_i)_a$ and $\psi((\mathrm{in}\,S_i)_{(x,y^-)})=(\mathrm{in}\,S_i)_a$. But then there is a bidirected path of length $2$ between these two vertices, the desired contradiction.

Given $i\in[k]$ and $a=(x,y)\in A_i(D)$, let $(x',y')\in A_i(D')$ such that $\psi(S_{i,(x,y)})=S_{i,(x',y')}$. It is now clear that $\psi((x,y))=(x',y')$. In particular, this shows that $\psi|_{V(D)}\in\mathrm{Hom}(D,D')$, and implies that $\psi(x_a)=x_{\psi(a)}$ for each $x\in V(S_i)$. Now, for each $\varphi\in\mathrm{Hom}(D,D')$, let $\varphi*\mathbf S:V(D*\mathbf S)\rightarrow V(D'*\mathbf S)$ be the mapping defined by $$(\varphi*\mathbf S)(x)=\begin{cases}\varphi(x) &\text{if $x\in V(D)$} \\ x_{\varphi(a)} &\text{if $x\in V(S_{i,a})$ for some $i\in[k]$ and $a\in A_i(D)$.}\end{cases}$$ Note that $\varphi*\mathbf S\in\mathrm{Hom}(D*\mathbf S,D'*\mathbf S)$ and, moreover, this yields a faithful functor $k\text{-}\mathsf{Systems}'\rightarrow k\text{-}\mathsf{Systems}'*\mathbf S$. By the above argument, it is also full, so $k\text{-}\mathsf{Systems}'$ and $k\text{-}\mathsf{Systems}'*\mathbf S$ are equivalent categories.
\end{proof}

The following observations will be useful for later.

\begin{obs}\label{obs:odd_girth_sip_product} Let $D$ be a digraph and $S$ an anti-reflexive symmetric indicator.
\vspace{-1mm}
\begin{enumerate}[label=(\roman*)]
   \item If a vertex of $D$ has total degree $d$, then it has degree $d$ as a vertex of $D*S$. In particular, if every vertex of $D$ of has the same total degree $d$, and $S$ is a $d$-indicator, then $D*S$ is a $d$-regular graph.
   \item If $C$ is an odd cycle of $D*S$, then it has length at least $\min\{\mathrm{oddgirth}(S),2+\mathrm{odddist}(\mathrm{in}\,S,\mathrm{out}\,S)\}$, where $\mathrm{odddist}(\mathrm{in}\,S,\mathrm{out}\,S)$ is the minimum odd length of a path between $\mathrm{in}\,S$ and $\mathrm{out}\,S$. Indeed, for if $C$ does not use any vertex of $D$ then $\mathrm{length}(C)\geq\mathrm{oddgirth}(S)$, and if it uses exactly $r$ vertices of $D$ then $\mathrm{length}(C)=\sum_{i=1}^r\mathrm{length}(P_i)+2r$ for some paths $P_1,\ldots,P_r$ between $\mathrm{in}\,S$ and $\mathrm{out}\,S$, and one of the paths has to be of odd length.
\end{enumerate}
\end{obs}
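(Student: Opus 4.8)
The plan is to argue directly from the definition of the \v s\'ip product. Recall that $D*S$ is built from $D$ by deleting every arc $(x,y)$ and inserting a private copy $S_{(x,y)}$ of $S$ attached to the rest of the graph only through the two edges $\{x,\mathrm{in}\,S_{(x,y)}\}$ and $\{y,\mathrm{out}\,S_{(x,y)}\}$; distinct copies are vertex-disjoint and touch the rest of $D*S$ only at vertices of $V(D)$. For (i), fix $v\in V(D)$: each arc of $D$ incident with $v$ contributes exactly one edge at $v$ in $D*S$ (an out-arc $(v,y)$ the edge to $\mathrm{in}\,S_{(v,y)}$, an in-arc $(x,v)$ the edge to $\mathrm{out}\,S_{(x,v)}$), and distinct arcs yield distinct copies and hence distinct neighbours, so $\deg_{D*S}(v)$ is the total degree of $v$ in $D$. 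For the ``in particular'' clause, a vertex $x$ of a copy $S_{(a,b)}$ keeps its degree inside the copy and gains one edge to $V(D)$ exactly when $x\in\{\mathrm{in}\,S,\mathrm{out}\,S\}$; since $S$ is a $d$-indicator this total is $d$ in every case, and combined with the first computation $D*S$ is $d$-regular.

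For (ii), let $C$ be an odd cycle of $D*S$ and let $r$ be the number of its vertices in $V(D)$. If $r=0$, then $C$ avoids $V(D)$, so by the disjointness of the copies it lies inside a single $S_{(a,b)}$ and is an odd cycle of $S$, whence $\mathrm{length}(C)\geq\mathrm{oddgirth}(S)$. If $r\geq 1$, the $r$ vertices of $V(D)$ on $C$ cut it into $r$ subpaths $P_1,\dots,P_r$ whose interiors avoid $V(D)$; since $D*S$ has no edge between two vertices of $V(D)$, each $P_i$ has at least one interior (gadget) vertex, and all interior vertices of $P_i$ lie in one copy $S_{(a_i,b_i)}$ (they are joined along $P_i$ without passing through $V(D)$). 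The first and last interior vertices of $P_i$ are neighbours of vertices of $V(D)$, hence each lies in $\{\mathrm{in}\,S_{(a_i,b_i)},\mathrm{out}\,S_{(a_i,b_i)}\}$; they must be distinct, for otherwise the two $V(D)$-ends of $P_i$ would be forced equal, impossible on a cycle (in the borderline case $r=1$ this precisely forces the arc underlying $S_{(a_i,b_i)}$ to be a loop). Thus the interior of $P_i$ is a path inside $S$ between $\mathrm{in}\,S$ and $\mathrm{out}\,S$ of some length $\ell_i$, and $\mathrm{length}(P_i)=\ell_i+2$, so $\mathrm{length}(C)=2r+\sum_{i=1}^r\ell_i$. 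Since $\mathrm{length}(C)$ is odd and $2r$ is even, some $\ell_{i_0}$ is odd, hence $\ell_{i_0}\geq\mathrm{odddist}(\mathrm{in}\,S,\mathrm{out}\,S)$ and $\mathrm{length}(C)\geq 2+\mathrm{odddist}(\mathrm{in}\,S,\mathrm{out}\,S)$. Taking the two cases together gives the stated lower bound.

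The degree count in (i) and the parity arithmetic in (ii) are routine; the only delicate point — and the one I expect to be the main obstacle — is pinning down precisely how an odd cycle of $D*S$ interacts with the gadget copies, namely that each segment between consecutive $V(D)$-vertices is confined to a single copy and passes through its $\mathrm{in}$ and $\mathrm{out}$ vertices, including the degenerate $r=1$ (loop) situation. Once this structural description is in place, both bounds fall out immediately.
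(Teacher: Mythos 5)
Your proof is correct and follows the same approach as the paper's: both decompose the odd cycle at its $V(D)$-vertices into segments confined to single gadget copies, each contributing an $\mathrm{in}$--$\mathrm{out}$ path plus two edges, and then use parity to force one such path to have odd length. The paper states the decomposition tersely inside the observation; you fill in the structural details (segments stay in one copy, their ends hit $\mathrm{in}$ and $\mathrm{out}$, the degenerate $r=1$ loop case), which is exactly what was implicitly asserted.
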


\subsection{A Cartesian product variant}\label{sect:cartesian}

Sabidussi~\cite{S57} shows that the Cartesian product is useful for the construction of regular graphs (and other kinds of graphs) with a prescribed automorphism group. Here we propose a similar method to approach our problem. Let $\mathbf G=(G_1,G_2)$ be a pair of graphs with the same vertex set, $H$ another graph, and $f:V(H)\rightarrow\{1,2\}$. The \emph{Cartesian product} of $\mathbf G$ and $H$ with respect to $f$, denoted by $\mathbf G\mathbin{\underset{f}{\square}}H$, is the graph with vertex set $V(G_1)\times V(H)$ where two vertices $(x,u)$ and $(y,v)$ are adjacent if and only if $x=y$ and $\{u,v\}\in E(H)$, or $\{x,y\}\in E(G_{f(u)})$ and $u=v$.

Let $\pi_{\mathbf G}:V(G_1)\times V(H)\rightarrow V(G_1)$ and $\pi_H:V(G_1)\times V(H)\rightarrow V(H)$ denote the projections $(x,u)\mapsto x$ and $(x,u)\mapsto u$, respectively; and for any $y\in V(G_1)$ and $v\in V(H)$ let $\iota_y:V(H)\rightarrow V(G_1)\times V(H)$ and $\iota_v:V(G_1)\rightarrow V(G_1)\times V(H)$ denote the injections $u\mapsto(y,u)$ and $x\mapsto(x,v)$.

\begin{lemma}\label{lem:deficient_Cartesian_product} Let $G_1,G_2,H$ be graphs such that $V(G_1)=V(G_2)$. Set $\mathbf G=(G_1,G_2)$ and let $f:V(H)\rightarrow\{1,2\}$ be a non-constant mapping, and $I$ a supergraph of $P=\mathbf G\mathbin{\underset{f}{\square}} H$ with vertex set $V(G_1)\times V(H)$. Assume that
\begin{enumerate}[label=(\roman*)]
   \item for every cycle $C$ of $I$ attaining the odd girth of $I$ it holds $|\pi_H(V(C))|=1$;
   \item $G_1$ and $G_2$ have the same odd girth, and for each $i\in\{1,2\}$ the union of cycles attaining the odd girth of $G_i$ is a spanning connected subgraph of $G_i$;
   \item for every edge $e\in E(I)\backslash E(P)$ it holds $|\pi_H(e)|=2$ and $\pi_H(e)\notin E(H)$;
   \item $\mathrm{Hom}(G_1,G_2)=\mathrm{Hom}(G_2,G_1)=\emptyset$;
   \item for every $\varphi,\psi\in\mathrm{End}(G_1)\cup\mathrm{End}(G_2)$, the following holds: if for every $x\in V(G_1)$ $(\varphi(x),u)$ and $(\psi(x),v)$ are adjacent vertices of $I$, then $u$ and $v$ are adjacent in $H$;
   \item for every $\varphi_H\in\mathrm{End}(H)\backslash\{\mathrm{id}_H\}$, if there is some $\varphi\in\mathrm{End}(I)$ such that $\varphi(\pi_H^{-1}(u))\subseteq\pi_H^{-1}(\varphi_H(u))$ for every $u\in V(H)$, then there exists some $v\in V(H)$ such that $f(v)\neq f(\varphi_H(v))$;
   \item $H$ is connected.
\end{enumerate}
Then, there is a monoid monomorphism $\mathrm{End}(I)\rightarrow\mathrm{End}(G_1)\cap\mathrm{End}(G_2)$.
\end{lemma}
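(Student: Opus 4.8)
The plan is to show that every endomorphism of $I$ respects the product structure coordinate-wise, and then extract from it a well-defined endomorphism of $G_1$ that is simultaneously an endomorphism of $G_2$. First I would use hypotheses (i), (ii), (iii) and (vii) to control how an arbitrary $\varphi\in\mathrm{End}(I)$ interacts with the fibers of $\pi_H$. Since the cycles attaining the odd girth of $I$ all lie inside a single $H$-fiber (by (i)), and within each copy of $G_i$ such short odd cycles span and connect it (by (ii)), the image $\varphi(\pi_H^{-1}(u))$ of any fiber must be contained in a single fiber: a short odd cycle in the fiber over $u$ maps to a short odd cycle of $I$, hence (by (i)) into one fiber, and connectivity plus the spanning property propagates this across the whole fiber. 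This yields a map $\varphi_H:V(H)\to V(H)$ with $\varphi(\pi_H^{-1}(u))\subseteq\pi_H^{-1}(\varphi_H(u))$ for all $u$. I would then check $\varphi_H\in\mathrm{End}(H)$: an edge $\{u,v\}\in E(H)$ gives adjacencies between the corresponding fibers in $P\subseteq I$, so $\varphi_H(u)$ and $\varphi_H(v)$ are either equal or adjacent in $I$ across fibers; ruling out equality uses that $\varphi$ restricted to a single $H$-fiber is (after identifying the fiber with $V(G_1)=V(G_2)$) a graph endomorphism of $G_{f(u)}$, hence not collapsing, combined with hypothesis (iii) which says the only "extra" edges of $I$ between distinct fibers project to non-edges of $H$ — so an edge of $I$ between fibers $\varphi_H(u)\ne\varphi_H(v)$ forces $\{\varphi_H(u),\varphi_H(v)\}\in E(H)$.

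Next I would pin down the "horizontal" behaviour. Fix a vertex $v\in V(H)$ and consider the induced copy $\iota_v(V(G_1))$, which carries the graph $G_{f(v)}$. Its image under $\varphi$ lies in the fiber over $\varphi_H(v)$, which carries $G_{f(\varphi_H(v))}$, and the induced map $\varphi^{(v)}:V(G_1)\to V(G_1)$ defined by $\varphi(y,v)=(\varphi^{(v)}(y),\varphi_H(v))$ is a homomorphism $G_{f(v)}\to G_{f(\varphi_H(v))}$. Here hypothesis (iv) is crucial: $\mathrm{Hom}(G_1,G_2)=\mathrm{Hom}(G_2,G_1)=\emptyset$ forces $f(v)=f(\varphi_H(v))$ for every $v$ whenever such a homomorphism exists — i.e. $f\circ\varphi_H=f$. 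Combined with (vi), if $\varphi_H\ne\mathrm{id}_H$ we would get some $v$ with $f(v)\ne f(\varphi_H(v))$, a contradiction; hence $\varphi_H=\mathrm{id}_H$. So $\varphi$ preserves every $H$-fiber setwise, and each $\varphi^{(v)}\in\mathrm{End}(G_{f(v)})$.

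Now I would show all the $\varphi^{(v)}$ coincide. If $\{u,v\}\in E(H)$ then for every $x\in V(G_1)$ the vertices $(x,u)$ and $(x,v)$ are adjacent in $P\subseteq I$, so $(\varphi^{(u)}(x),u)$ and $(\varphi^{(v)}(x),v)$ are adjacent in $I$; since they lie in distinct fibers and $\{u,v\}\in E(H)$, this adjacency must be of the form $x'=y'$ type, forcing $\varphi^{(u)}(x)=\varphi^{(v)}(x)$ — one should double-check using (iii) that no edge of $E(I)\setminus E(P)$ between these two particular fibers exists, which holds since $\pi_H(\{u,v\})=\{u,v\}\in E(H)$. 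By connectivity of $H$ (hypothesis (vii)), there is a single endomorphism $\bar\varphi:=\varphi^{(v)}$ independent of $v$, with $\bar\varphi\in\bigcap_v\mathrm{End}(G_{f(v)})=\mathrm{End}(G_1)\cap\mathrm{End}(G_2)$ since $f$ is non-constant and takes both values. The assignment $\varphi\mapsto\bar\varphi$ is clearly a monoid homomorphism $\mathrm{End}(I)\to\mathrm{End}(G_1)\cap\mathrm{End}(G_2)$, and it is injective because $\varphi(y,v)=(\bar\varphi(y),v)$ recovers $\varphi$ completely. (Hypothesis (v) is what guarantees $P=\mathbf G\mathbin{\underset{f}{\square}}H$, and more relevantly $I$, has no unexpected cross-fiber edges forced by $\varphi$; it slots in precisely when verifying that $\varphi^{(u)}=\varphi^{(v)}$ and that $\varphi_H$ preserves edges.)

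The main obstacle I anticipate is the first step: rigorously forcing $\varphi(\pi_H^{-1}(u))$ into a single fiber and establishing that $\varphi_H$ is a well-defined graph endomorphism. This is where the interplay of the odd-girth hypothesis (i), the spanning-connected-short-odd-cycles hypothesis (ii), and the "extra edges project to non-edges" hypothesis (iii) must be combined carefully — in particular handling the possibility that a fiber's image is spread out, or that an edge of $I\setminus P$ creates a spurious short odd cycle crossing fibers, requires using (i) and (iii) in tandem. Everything after $\varphi_H=\mathrm{id}_H$ is comparatively routine bookkeeping with the product structure.
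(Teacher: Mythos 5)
Your overall architecture matches the paper's proof closely: use (i)--(iii) to show each $\varphi\in\mathrm{End}(I)$ sends $\pi_H$-fibers into $\pi_H$-fibers (inducing $\varphi_H:V(H)\to V(H)$), then show $f\circ\varphi_H=f$ via (iv), that $\varphi_H\in\mathrm{End}(H)$, hence $\varphi_H=\mathrm{id}_H$ via (vi), then glue the fiberwise maps into a single $\bar\varphi\in\mathrm{End}(G_1)\cap\mathrm{End}(G_2)$ via (iii) and (vii). That is exactly the paper's route, and the fiber-to-fiber step and the final gluing are fine.

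However, the argument you actually write down for $\varphi_H\in\mathrm{End}(H)$ does not work, and this is the one place where hypothesis (v) is genuinely load-bearing. You claim that equality $\varphi_H(u)=\varphi_H(v)$ for an edge $\{u,v\}\in E(H)$ is ruled out because the fiberwise restriction is ``a graph endomorphism, hence not collapsing.'' That doesn't follow: it is perfectly possible for two endomorphisms $\alpha,\beta$ of the same graph to satisfy $\alpha(x)\sim\beta(x)$ for every $x$ (consider the identity and a rotation of an odd cycle), so the adjacencies between $\varphi((x,u))$ and $\varphi((x,v))$ inside a single fiber create no contradiction on their own. Similarly, ``(iii) forces $\{\varphi_H(u),\varphi_H(v)\}\in E(H)$'' is backwards: (iii) only says edges of $E(I)\setminus E(P)$ project to non-edges of $H$, so an edge of $I$ between the two image fibers could still come from $E(I)\setminus E(P)$ and project to a non-edge. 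What actually closes both gaps at once is hypothesis (v): having established $f\circ\varphi_H=f$ (which you should do first, as the paper does, since (v) is stated only for members of $\mathrm{End}(G_1)\cup\mathrm{End}(G_2)$), the maps $\pi_{\mathbf G}\circ\varphi\circ\iota_u$ and $\pi_{\mathbf G}\circ\varphi\circ\iota_v$ lie in $\mathrm{End}(G_{f(u)})$ and $\mathrm{End}(G_{f(v)})$, and then (v) applied to them yields directly that $\varphi_H(u)$ and $\varphi_H(v)$ are adjacent in $H$. Your parenthetical acknowledges (v) ``slots in'' here, but the main text gives an incorrect substitute argument, and you also misattribute (v) to the later step $\varphi^{(u)}=\varphi^{(v)}$, which in fact needs only (iii) and (vii). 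The fix is simple: reorder to derive $f\circ\varphi_H=f$ from (iv) before invoking (v), and replace the ``not collapsing''/(iii) reasoning for edge-preservation by the direct application of (v).
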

\begin{proof} Let $\varphi\in\mathrm{End}(I)$. By (i), we have that $|\pi_H(\varphi(V(C)))|=1$ for every cycle $C$ of $I$ of minimum odd girth. By (ii), for every $u\in V(H)$ there exists a $v\in V(H)$ such that $\varphi(\pi_H^{-1}(u))\subseteq\pi_H^{-1}(v)$. This defines a mapping $\varphi_H:V(H)\rightarrow V(H)$. 
By (iii), for every $v\in V(H)$ the restriction of $\pi_{\mathbf G}$ to $\pi_H^{-1}(v)$ is in $\mathrm{Hom}(I[\pi_H^{-1}(v)],G_{f(v)}$). Therefore, the composition $$V(G_1)\overset{\iota_v}{\longrightarrow}\pi_H^{-1}(v)\overset{\varphi}{\longrightarrow}\pi_H^{-1}(\varphi_H(v))\overset{\pi_{\mathbf G}}{\longrightarrow} V(G_1)$$ is in $\mathrm{Hom}(G_{f(v)},G_{f(\varphi_H(v))})$ for every $v\in V(H)$. In particular, $f(\varphi_H(v))=f(v)$ for every $v\in V(H)$, by (iv). Since for every $x\in V(G_1)$ and every $\{u,v\}\in E(H)$ $\varphi((x,u))=(\pi_{\mathbf G}\circ\varphi\circ\iota_u(x),\varphi_H(u))$ and $\varphi((x,v))=(\pi_{\mathbf G}\circ\varphi\circ\iota_v(x),\varphi_H(v))$ are neighbours in $I$, (v) yields that $\varphi_H\in\mathrm{End}(H)$. Thus, $\varphi_H=\mathrm{id}_H$ by (vi).
Now, by (iii) and (vii), for any $u,v\in V(H)$ $\pi_{\mathbf G}\circ\varphi\circ\iota_u=\pi_{\mathbf G}\circ\varphi\circ\iota_v$. Since $f$ is non-constant, this yields a monomorphism $$\begin{matrix}\mathrm{End}(I)&\!\!\longrightarrow\!\!&\mathrm{End}(G_1)\cap\mathrm{End}(G_2) \\ \varphi&\!\!\longmapsto\!\!&\pi_{\mathbf G}\circ\varphi\circ\iota_u.\end{matrix}$$
\end{proof}

\begin{remark}
    While we believe it to be natural, we have not found this variant of the Cartesian product or its generalization to $\mathbf G=(G_1,\ldots, G_n)$ graphs on the same vertex set and $H$ with $|V(H)|=n$ in the literature, e.g.~\cite{HIK11}.
\end{remark}

\subsection{Tiling factors}\label{sec:tiling_factors}

This section is devoted to the construction of rigid $3$-indicators of arbitrarily large odd girth. Given $g\geq 7$ a \emph{$g$-tiling graph} is a plane graph $G$ satisfying:
\begin{enumerate}[label=(\roman*)]
   \item every bounded face has exactly $g$ vertices;
   \item every edge belongs to two faces;
   \item every vertex of the unbounded face has degree $2$ or $3$;
   \item all remaining vertices have degree $3$.
\end{enumerate}
In particular, by (ii) and the constraints on the degrees, a vertex $v$ of $G$ belongs to exactly $\deg(v)$ faces. We denote by $\partial G$ the subgraph of $G$ consisting of the vertices and edges of the unbounded face. The vertices of $\partial G$ are the $border$ vertices of $G$, and the other vertices are the \emph{interior} vertices of $G$.

\begin{figure}[h]
\centering
\includegraphics[scale=1]{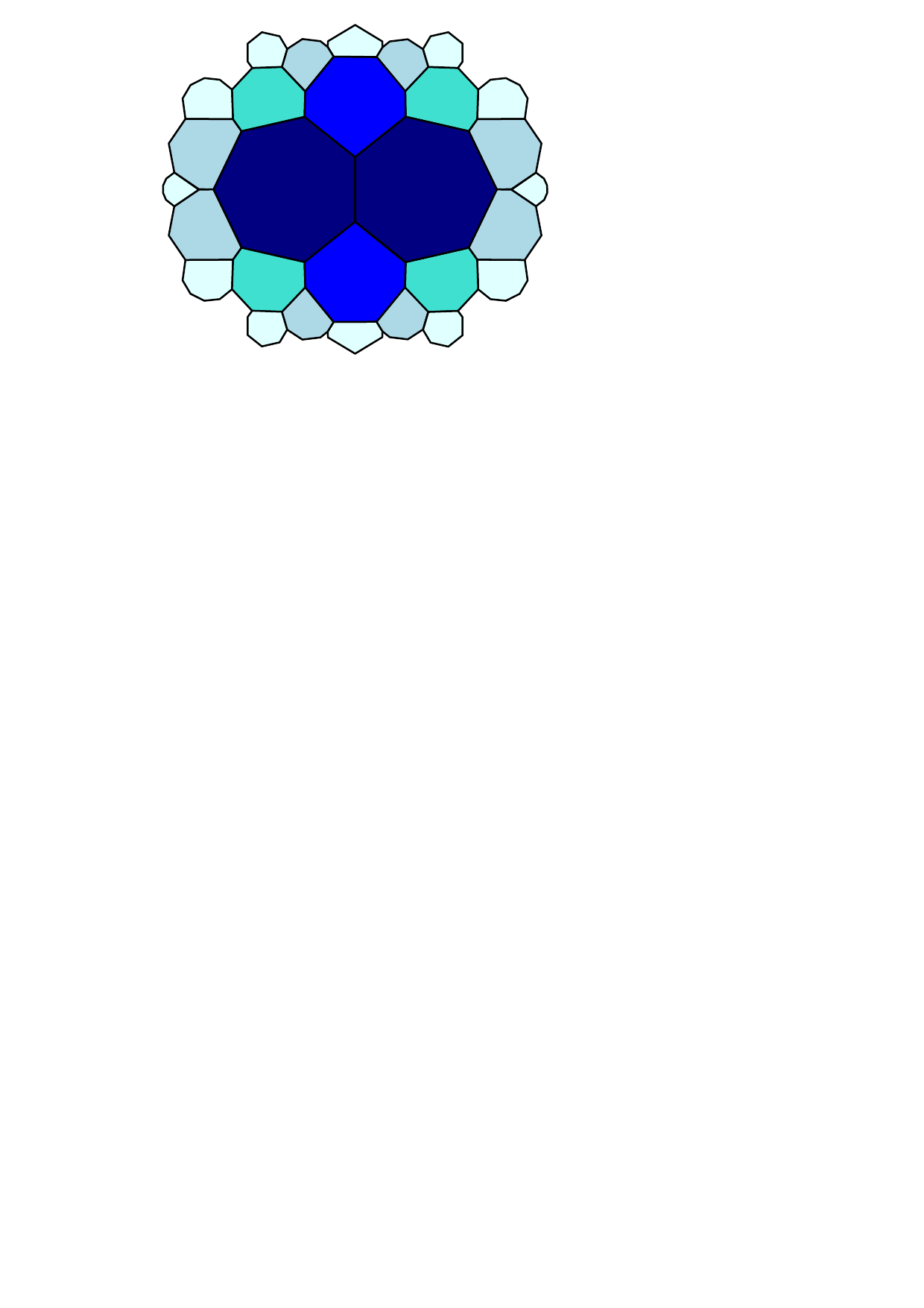}
\caption{A $7$-tiling graph.
}
\label{fig:tiling}
\end{figure}

\begin{obs}\label{obs:tiling_girth} The minimum girth cycles of a $g$-tiling graph $G$ have length $g$ and correspond to the bounded faces of $G$. Indeed, suppose that $C$ is a cycle of $G$ of minimum length (in particular, at most $g$), and let $H$ be the subgraph consisting of $C$ and all vertices and edges in the region bounded by $C$. If $f,e,n,n_2$ are the number of faces, edges, vertices and vertices of degree $2$ of $H$, respectively, then, by double counting, $3n-n_2=(f-1)g+|C|=2e$. Thus, by Euler's formula,
\begin{align*}
6f-(f-1)g-|C|+2n_2&\geq 12 \\
f(6-g)&\geq 12-g+|C|-2n_2\geq 12-2g \\
f&\leq 2.
\end{align*}
That is, the region bounded by $C$ is a face and $|C|=g$.
\end{obs}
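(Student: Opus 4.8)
The plan is to prove Observation~\ref{obs:tiling_girth}, which asserts that the minimum-girth cycles of a $g$-tiling graph $G$ all have length $g$ and are exactly the boundaries of bounded faces. Let me sketch the argument already gestured at in the excerpt and fill in the pieces I would need to make it rigorous.

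First I would fix a cycle $C$ of minimum length in $G$; since every bounded face has $g$ vertices and bounds a cycle of length $g$, certainly the girth is at most $g$, so $|C|\le g$. Because $G$ is a plane graph, $C$ is a closed Jordan curve and I may look at the plane subgraph $H$ consisting of $C$ together with all vertices and edges of $G$ lying in the closed region bounded by $C$. Let $f,e,n$ denote the number of faces (counting the outer face of $H$), edges, and vertices of $H$, and let $n_2$ be the number of vertices of $H$ that have degree $2$ in $H$. Two combinatorial identities drive the argument. The degree count: every vertex of $H$ has degree $2$ or $3$ in $H$ (degree in $H$ is at most the degree in $G$, which is at most $3$, and interior-to-$C$ vertices cannot become degree-$0$ or $1$ without contradicting that $G$ is a tiling graph — here I would use property (ii), that every edge lies in two faces, to rule out pendant edges and isolated vertices inside $C$), so $\sum_{v}\deg_H(v)=3n-n_2=2e$. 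The face count: each bounded face of $H$ is a bounded face of $G$, hence has exactly $g$ vertices and $g$ edges on its boundary; summing over the $f-1$ bounded faces and the one outer face (with boundary $C$, of length $|C|$) and using that every edge of $H$ borders exactly two of these faces gives $(f-1)g+|C|=2e$. This is the step I would be most careful about: I need that no edge of $H$ borders the outer face of $H$ on both sides (impossible, as $C$ is a genuine cycle) and that every interior edge of $H$ genuinely separates two bounded faces of $G$ — again property (ii).

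With $3n-n_2=(f-1)g+|C|=2e$ in hand, I would apply Euler's formula $n-e+f=2$ to eliminate $n$ and $e$. From $2e = (f-1)g+|C|$ and $2n = \tfrac{2}{3}\big((f-1)g+|C|+n_2\big)$, substituting into $2n-2e+2f=4$ yields, after clearing the factor of $3$,
\begin{equation*}
\big((f-1)g+|C|+n_2\big) - 3\big((f-1)g+|C|\big) + 6f = 12,
\end{equation*}
which rearranges to $f(6-g) \ge 12 - g + |C| - 2n_2$ — exactly the chain of inequalities displayed in the excerpt, using $n_2\ge 0$ on the left after moving things around, and then $|C|-2n_2\ge -2g$ since $|C|\ge 3$ and... actually here I would instead argue directly: since $g\ge 7$, $6-g<0$, so the inequality $f(6-g)\ge 12-g+|C|-2n_2$ bounds $f$ from above. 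The cleanest finish: combine $|C|\le g$ with the bound to force $f\le 2$. When $f=2$, $H$ has exactly one bounded face, so the region enclosed by $C$ is a single face of $G$, and then $|C|=g$ by property (i).

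The main obstacle, as I see it, is the bookkeeping that justifies the two counting identities $3n-n_2=2e$ and $(f-1)g+|C|=2e$ in full rigor — specifically, ruling out degenerate configurations inside $C$ (pendant edges, cut vertices creating bridges, vertices of $H$ of degree less than $2$) and confirming that every bounded face of $H$ really is a bounded face of $G$ with exactly $g$ boundary vertices and that the two-faces-per-edge condition transfers to $H$. All of these follow from the defining properties (i)--(iv) of a $g$-tiling graph, particularly (ii), but they need to be spelled out. Once the two identities are granted, the Euler's-formula manipulation and the inequality $f\le 2$ are routine, and the conclusion that minimum cycles are face boundaries of length $g$ is immediate. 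I would also remark that this simultaneously shows the girth of $G$ is exactly $g$, since we exhibited cycles of length $g$ (the face boundaries) and showed nothing shorter exists.
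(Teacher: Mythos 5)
Your approach is the same as the paper's: form $H$, establish the two double-counting identities $3n-n_2=2e$ and $(f-1)g+|C|=2e$, and plug into Euler's formula to bound $f$. The identities are set up correctly and your remarks about why degenerate configurations inside $C$ are ruled out are on the right track. However, the final algebraic step has a real gap. The chain $f(6-g)\ge 12-g+|C|-2n_2\ge 12-2g$ requires a \emph{lower} bound on $12-g+|C|-2n_2$, hence an \emph{upper} bound on $n_2$. Your first attempt, ``using $n_2\ge 0$,'' goes in the wrong direction; your second, ``$|C|-2n_2\ge -2g$ since $|C|\ge 3$,'' is both unjustified and, even if granted, insufficient (it would only give $f(6-g)\ge 12-3g$, i.e.\ $f\le (12-3g)/(6-g)$, which for $g=7$ is $f\le 9$, not $f\le 2$); and your ``cleanest finish'' of simply combining $|C|\le g$ with the inequality cannot work on its own, since $n_2$ is still unconstrained.

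What is actually needed, and what the paper implicitly uses, is that \emph{every degree-$2$ vertex of $H$ lies on $C$}: any vertex of $H$ strictly inside $C$ is surrounded by bounded faces of $G$, hence is an interior vertex of $G$ and has degree $3$ both in $G$ and in $H$. This gives $n_2\le|C|$, and then $|C|-2n_2\ge -|C|\ge -g$ (using $|C|\le g$), which is exactly the bound $\ge 12-2g$ and yields $f\le 2(6-g)/(6-g)=2$. Since $f\ge 2$ always (there is at least one bounded face), we get $f=2$, so the region bounded by $C$ is a single face and $|C|=g$ by property (i). You gesture at the relevant facts (interior vertices of $H$ have degree $3$) in your discussion of the identity $3n-n_2=2e$, but you never draw the consequence $n_2\le|C|$, and that is precisely the fact needed to close the inequality.
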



\begin{lemma}\label{lem:tiling_factor_girth} Let $G$ be a $g$-tiling graph such that
\begin{enumerate}[label=(\roman*)]
   \item $g\geq 7$ is odd;
   \item $\partial G$ is an even cycle;
   \item there is an automorphism $x\mapsto -x$ of $G$ mapping every vertex of $\partial G$ to its antipode in $\partial G$;
   \item $\mathrm{dist}_G(x,-x)\geq g$ for every vertex $x$ of $\partial G$;
   \item there is an odd integer $h\geq (g+1)/2$ and a set $U=\{u_0,\ldots,u_{2h-1}\}$ of border vertices of degree $2$, such that $u_{i+h}=-u_i$ for $0\leq i\leq h-1$ and $\mathrm{dist}_G(u_i,u_j)\geq g$ for $0\leq i<j\leq 2h-1$.
\end{enumerate}
Let $\overline T$ be the graph $G\cup\{\{x,-x\}\mid\deg_G(x)=2\}\cup\{\{u_{2i},u_{2i+1}\}\mid 0\leq i\leq h-1\}$ (see the right of Figure~\ref{fig:factor}). Then, every shortest odd cycle of $\overline T$ is  in $G$.
\end{lemma}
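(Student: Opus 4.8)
The plan is to show that any shortest odd cycle $C$ of $\overline T$ cannot use any of the newly added edges, so that it lies entirely in $G$, whence by Observation~\ref{obs:tiling_girth} it bounds a face of $G$. The three types of edges to rule out are: the ``antipodal'' edges $\{x,-x\}$ for degree-$2$ vertices $x$, and the ``matching'' edges $\{u_{2i},u_{2i+1}\}$. Note first that $\overline T$ does contain an odd cycle of length at most, say, $2g$: walking along $\partial G$ from a vertex $x$ of $\partial G$ to its antipode $-x$ and closing up with an antipodal edge gives a cycle; by (ii) $\partial G$ is an even cycle, so its two $x$--$(-x)$ arcs have the same parity, but the length of one of them plus $1$ is odd, so some such closed walk has odd length and is at most $|\partial G|+1$. (If $x$ has degree $2$ this is a genuine cycle; in general one closes up using one of the border vertices of degree $2$ guaranteed by (v), whose antipode is also such a vertex.) So $\ell := {\rm oddgirth}(\overline T)$ is finite, and we must show $\ell = g$ by showing a shortest odd cycle avoids all new edges.

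The key estimate will be on the distances between endpoints of new edges within $G$. A new edge $f$ of $\overline T$ has the form $\{x,-x\}$ or $\{u_{2i},u_{2i+1}\}$; in both cases its two endpoints are border vertices. Suppose $C$ is a shortest odd cycle of $\overline T$ using at least one new edge; let $f_1,\ldots,f_m$ ($m\ge 1$) be the new edges it uses and $P_1,\ldots,P_m$ the arcs of $C$ lying in $G$ between consecutive endpoints (some $P_j$ possibly of length $0$, if two new edges share an endpoint — though by (iv) and (v) distinct endpoints of distinct new edges are at distance $\ge g$ in $G$, which in fact forbids length-$0$ arcs unless $m=1$ and $C$ has length $\le 2$, impossible). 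Then ${\rm length}(C) = m + \sum_{j=1}^m {\rm length}(P_j)$. Each $P_j$ is a path in $G$ between two border vertices that are endpoints of new edges; by (iv) (for antipodal edges) and (v) (for the matching edges, and mixed cases, noting $u_{2i}, u_{2i+1}$ are at distance $\ge g$ and each is at distance $\ge g$ from any antipode pair entry) we get ${\rm length}(P_j) \ge $ the relevant distance bound. I would organize the case analysis by which kinds of new edges appear and check in every case that $\sum {\rm length}(P_j) \ge$ something forcing ${\rm length}(C) > g$, unless the cycle is forced to be non-simple or to repeat an edge — contradiction.

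The main obstacle, and the point requiring the most care, is the parity bookkeeping combined with the possibility that $C$ uses several new edges: a single new edge contributes $1$ to the length, so if $C$ used, say, $g$ new edges and short connecting arcs it could in principle be short and odd. This is where hypothesis (v) does the real work: it forces $h \ge (g+1)/2$ matching edges whose endpoints are pairwise far apart, and the antipodal structure (iii)–(iv) forces the $x$--$(-x)$ arcs to be long. Concretely, if $C$ uses $m$ new edges then it must contain $m$ internally-disjoint $G$-paths joining their endpoints, and by the distance hypotheses each such path has length $\ge g$ unless it joins $x$ to $-x$ via a ``short side'' — but (iv) says even ${\rm dist}_G(x,-x) \ge g$. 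So $m \ge 1$ already gives ${\rm length}(C) \ge m + g \cdot(\text{number of nontrivial arcs})$; the only way to beat $g$ is $m=1$ with a single arc $P_1$ of length $\ge g - 1$, giving ${\rm length}(C) \ge g$, with equality only if ${\rm length}(P_1) = g-1$ and the cycle has even length $g-1+1 = g$ — wait, that is even plus one, hence... one must check the parity: $P_1$ joins $x$ to $-x$ (or $u_{2i}$ to $u_{2i+1}$), closing with one new edge gives length ${\rm length}(P_1)+1$; for this to be odd, ${\rm length}(P_1)$ must be even, and $\ge g-1$, but $g-1$ is even, so this is possible only if ${\rm length}(P_1) = g-1$ exactly, i.e. ${\rm dist}_G(x,-x) = g-1 < g$, contradicting (iv). Hence no shortest odd cycle uses a new edge, so it lies in $G$ and (by Observation~\ref{obs:tiling_girth}) has length exactly $g$ and bounds a face. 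I would double-check the degenerate subcase $m \ge 2$ by the same arithmetic: $m$ new edges force at least $m$ (in fact exactly $m$, as $C$ is a cycle) connecting $G$-arcs, at least $m-?$ of which are nontrivial, each of length $\ge g$ by (iv)/(v); so ${\rm length}(C) \ge g + 1 > g$, done.
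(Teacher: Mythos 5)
Your plan has a genuine gap at its central step. You assert that ``by (iv) and (v) distinct endpoints of distinct new edges are at distance $\ge g$ in $G$,'' and use this to bound each connecting arc $P_j$ below by $g$. That assertion is false. Hypotheses (iv) and (v) only bound $\mathrm{dist}_G(x,-x)$ and $\mathrm{dist}_G(u_i,u_j)$. They say nothing about $\mathrm{dist}_G(x,y)$ or $\mathrm{dist}_G(x,-y)$ for two distinct non-antipodal degree-$2$ border vertices $x,y$. Such vertices can be adjacent in $G$ (look at the long runs of degree-$2$ border vertices in Figure~\ref{fig:tiling}), yet each is an endpoint of its own straight edge $\{x,-x\}$, $\{y,-y\}$. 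So a cycle can alternate between several straight edges and very short $G$-arcs, and your lower bound $\mathrm{length}(C)\ge m+g\cdot(\#\text{nontrivial arcs})$ breaks down exactly for $m\ge 2$. Your own confidence dips here (the arithmetic near the end, with $\mathrm{length}(P_1)\ge g-1$ appearing out of nowhere and a parity check that should not be needed, is a symptom of the estimate not really giving what you want).

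The paper avoids this entirely, and the mechanism is different in kind: it is not a distance argument but a symmetry argument. The paper first shows, via a cyclic/arithmetical argument about the indexing of $U$ (Claims~\ref{claim:straight_paths} and~\ref{claim:U_spread}), that any path between the two endpoints of a twisted edge, other than the edge itself, has length $\ge g$; this rules out twisted edges from a shortest odd cycle. Then, for straight edges, if $C$ contains two of them it is written as $Q\{x,-x\}R\{y,-y\}$, and one uses the automorphism $x\mapsto -x$ to replace $Q$ by its antipodal image $-Q$: the concatenation $(-Q)R$ is a closed odd walk of length $\mathrm{length}(C)-2$, contradicting minimality. No lower bound on the length of the arcs $Q,R$ is needed. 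Finally a single straight edge gives $\mathrm{length}(C)\ge\mathrm{dist}_G(x,-x)+1\ge g+1$, using (iv) exactly once. To repair your proof you would need to replace the blanket pairwise-distance claim with some version of this reflection trick, or with the cyclic-index argument behind Claim~\ref{claim:U_spread}; pure distance bounds from the stated hypotheses are not strong enough.
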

\begin{proof} Let us divide the edges in $E(\overline T)\backslash E(G)$ into two types: the ones of the form $\{x,-x\}$, called \emph{straight}, and the others, called \emph{twisted}. We will use the following claims.

\begin{claim}\label{claim:straight_paths} \textit{Let $u,u'$ be two different vertices in $U$ and $P$ a path between them in $\overline T$. Assume that $P$ does not use any twisted edge and that it is shortest among such paths. If $\mathrm{length}(P)<g$, then $u'=-u$. Therefore, $P$ consists solely in the edge $\{u,u'\}$.}
\end{claim}
\noindent\textit{Proof of Claim~\ref{claim:straight_paths}.}
By (v), $P$ uses at least one straight edge. Suppose that it uses at least two. We write $P$ as a concatenation $Q\{x,-x\}R\{y,-y\}S$, where $Q,R,S$ are paths in $\overline T$ from $u$ to $x$, from $-x$ to $y$, and from $-y$ to $u'$, respectively. Since, in particular, $R$ has no twisted edges, we have that $Q(-R)S$ is a walk from $u$ to $u'$. But this contradicts that $P$ is shortest. Hence, we can write $P$ as a concatenation $Q\{x,-x\}R$, where $Q,R$ are paths in $G$ from $u$ to $x$ and from $-x$ to $u'$, respectively. Then, $(-Q)R$ is a walk in $G$ from $-u$ to $u'$. By (v), we must have $u'=-u$. \qedblack

\begin{claim}\label{claim:U_spread} \textit{Let $\{u,u'\}$ be a twisted edge of $\overline T$ and $P$ a path in $\overline T$ between $u$ and $u'$, other than $\{u,u'\}$ itself. Then, $\mathrm{length}(P)\geq g$.}
\end{claim}
\noindent\textit{Proof of Claim~\ref{claim:U_spread}.} Let us consider the cycle $C$ defined by the concatenation $\{u,u'\}P=e_1 P_1 e_2 P_2 \ldots e_s P_s$, where $e_1=\{u,u'\}$, for $2\leq i\leq s$ $e_i=\{u_{j_i},u'_{j_i}\}$ is a twisted edge, and for each $1\leq i\leq s$ $P_i$ is a path from $u'_{j_i}$ to $u_{j_{i+1}}$ without twisted edges, with the convention that $u'_{j_1}=u'$ and $u_{j_{s+1}}=u_{j_1}=u$. Note that each of the paths $P_1,\ldots,P_s$ has at least one edge. We can assume that $j_1$ is even. Suppose that $\mathrm{length}(P)<g$, and that $P$ is shortest in $\overline T\setminus\{\{u,u'\}\}$. Then, by Claim~\ref{claim:straight_paths}, for each $1\leq i\leq s$ $u_{j_{i+1}}=-u'_{j_{i}}$ and $P_i$ consists of the edge $\{u'_{j_i},u_{j_i+1}\}$. Therefore, $u_{j_2}=-u'_{j_1}=-u_{j_1+1}=u_{j_1+h+1}$, where the addition is modulo $2h$. In particular, $j_2\equiv j_1+h+1$ is even. By recursion we see that, for $1\leq i\leq s$, $j_{i+1}\equiv j_{i}+h+1$. This leads to $j_1\equiv j_1+s(h+1)$. Therefore, $s$ is divisible by $h$, and in fact $s=h$, because there are only $h$ twisted edges. Hence we have that $\mathrm{length}(P)=2s-1=2h-1\geq g$. \qedblack

We proceed with the proof of the Lemma. Recall that the odd girth of $G$ is precisely $g$. Let $C$ be a shortest odd cycle of $\overline T$. By Claim~\ref{claim:U_spread}, $C$ has no twisted edges. Suppose that $C$ has two straight edges. Then we can write it as a concatenation $Q\{x,-x\}R\{y,-y\}$, where $Q,R$ are paths from $-y$ to $x$, and from $-x$ to $y$, respectively. Due to the absence of twisted edges, we have that $(-Q)R$ is a closed odd walk in $\overline T$, contradicting the fact that $C$ is shortest. If $C$ has only one straight edge $\{x,-x\}$, then $\mathrm{length}(C)\geq\mathrm{dist}_G(x,-x)+1\geq g+1$. Hence, $C$ has no straight edges either, meaning that it is a cycle of $G$.
\end{proof}

Let $G$ be a $g$-tiling graph satisfying (i)--(v) from Lemma~\ref{lem:tiling_factor_girth} for some set of vertices $U=\{u_0,u_1,\ldots u_{2h-1}\}\subseteq V(G)$. Assume that, additionally,
\begin{enumerate}[label=(\roman*)]
\setcounter{enumi}{5}
      \item the unique automorphism of $G$ that maps $\{u_0,u_1\}$ to itself is the identity;
   \item every $g$-cycle of $G$ has an interior vertex of $G$;
   \item the subgraph of $G$ induced by the interior vertices is connected.
\end{enumerate}
In this situation, a \emph{$g$-tiling factor} from $G$ is a pair of graphs $\mathbf T=(T,T')$, where $T$ is of the form $G\cup\{\{x,-x\}\mid\deg_G(x)=2,\ x\notin U\}\cup\{\{u_{2i},u_{2i+1}\}\mid 1\leq i\leq h-1\}$, and  $T'=G\cup\{\{x,-x\}\mid\deg_G(x)=2\}$ (left and centre of Figure~\ref{fig:factor}). The term refers to the role that $\mathbf T$ will play as a factor in several Cartesian products (see Section~\ref{sect:cartesian}). We collect some observations that will be useful for later.

\begin{remark}\label{rem:easy_properties_tiling_factors} $T\cup\{\{u_0,u_1\}\}$ and $T'$ are cubic. $G$, $T$ and $T'$ are connected, have odd girth $g$, and every vertex of $G$, $T$ and $T'$ belongs to a $g$-cycle. 
\end{remark}

\begin{figure}[h]
\centering
\includegraphics[scale=0.79]{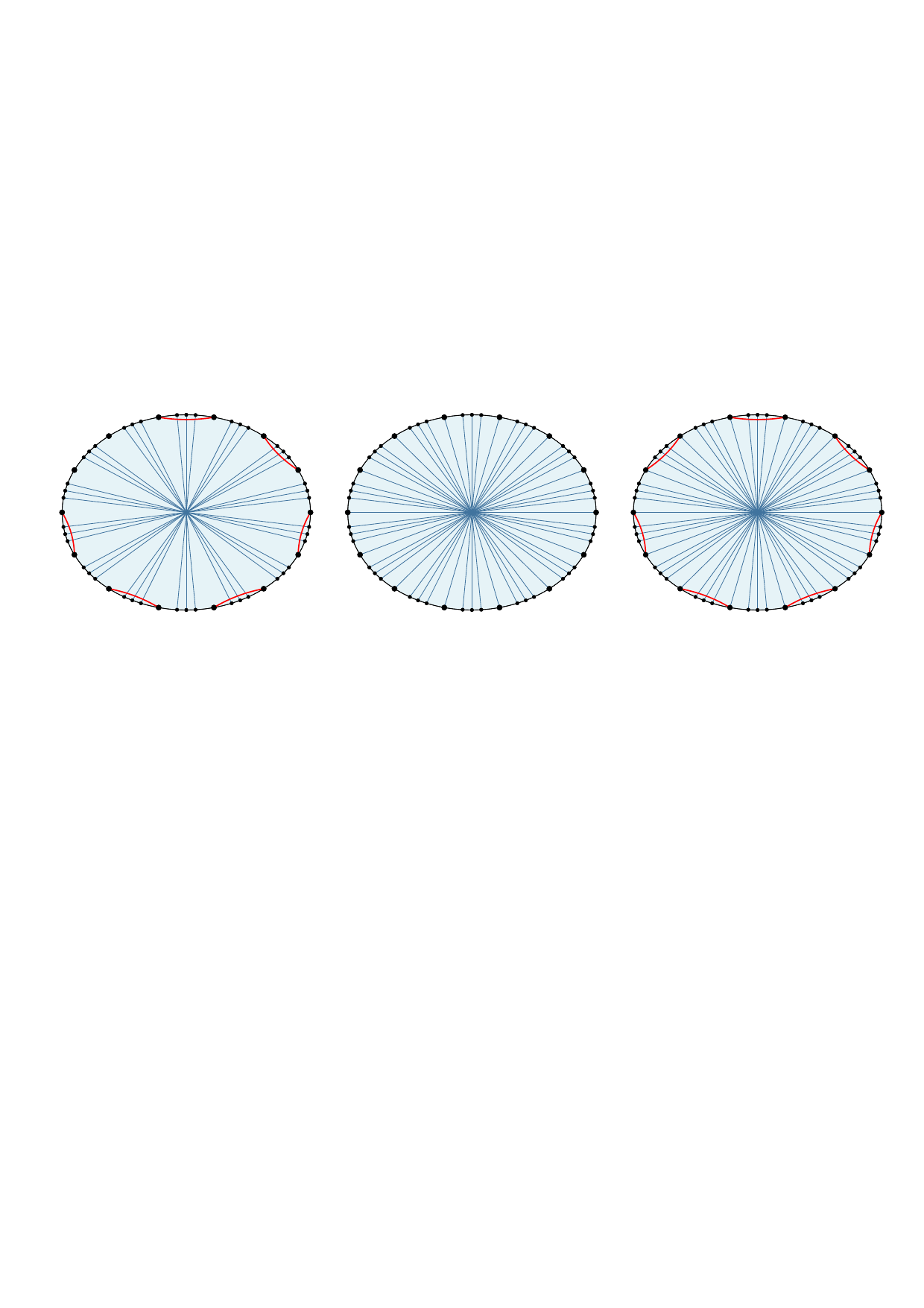}
\caption{The two graphs $T$ (left) and $T'$ (centre) of the $g$-tiling factor $\mathbf T=(T,T')$ from a $g$-tiling graph $G$, and the graph $\overline T$ from Lemma~\ref{lem:tiling_factor_girth} (right), obtained by adding to $T$ the edge $\{u_0,u_1\}$ and all the edges of $T'$. In the illustration we assume that the symmetries of the ellipse correspond to the symmetries of $G$. Only the vertices from $U$ (larger) and some other vertices of the border (smaller) have been depicted. Twisted edges appear in red, and straight edges in dark blue.}
\label{fig:factor}
\end{figure}

Let $G$ be any graph and $H$ a subgraph of $G$. A homomorphism $\varphi:G\rightarrow H$ is called a \emph{retraction} if the restriction of $\varphi$ to $V(H)$ is the identity, and in this case $H$ is a \emph{retract} of $G$. If $G$ has no retract other than itself, $G$ is called a \emph{core}. Cores are precisely those graphs whose endomorphisms are all automorphisms~\cite{HN92}.

\begin{lemma}\label{lem:deficient_tiling_factors_are_rigid} Let $\mathbf T=(T,T')$ be a $g$-tiling factor from a $g$-tiling graph $G$. Then
\begin{enumerate}[label=(\roman*)]
   \item $T$ is rigid;
   \item $\mathrm{Hom}(T,T')=\mathrm{Hom}(T',T)=\emptyset$.
\end{enumerate}
\end{lemma}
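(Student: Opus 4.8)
The plan is to push every homomorphism back to the $g$-tiling graph $G$ and then use conditions (iii), (v), (vi) in the definition of a $g$-tiling factor together with the fact that $G$ is a core.

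\textbf{Reduction to $G$.} First I would record that $T$, $T'$ and $\overline T$ all have vertex set $V(G)$. Combining Remark~\ref{rem:easy_properties_tiling_factors}, Observation~\ref{obs:tiling_girth} and Lemma~\ref{lem:tiling_factor_girth} applied to the common supergraph $\overline T\supseteq T,T'$: each of $T$ and $T'$ has odd girth $g$ and every $g$-cycle of $T$ or of $T'$ is a bounded face of $G$, while, by definition~(ii) of a $g$-tiling graph together with Observation~\ref{obs:tiling_girth}, the bounded faces of $G$ cover $E(G)$. Hence any homomorphism $\varphi$ from one of $T,T'$ to one of $T,T'$ sends each bounded face of $G$ — a shortest odd cycle of the source — to a closed walk of odd length $g$ in the target, which is forced to be a $g$-cycle and so a bounded face of $G$; therefore $\varphi(E(G))\subseteq E(G)$ and $\psi:=\varphi|_{V(G)}\in\mathrm{End}(G)$.

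\textbf{The core obstacle.} The heart of the argument is that $G$ is a core, so that $\psi\in\mathrm{Aut}(G)$. Given the previous paragraph, for an interior vertex $v$ of $G$ the three faces through $v$ force $\psi$ to be injective on $N_G[v]$ (two neighbours of $v$ on a common face get distinct images, as the image of that face is a $g$-cycle); consequently $\psi$ maps interior vertices to interior vertices and restricts to a locally injective endomorphism of the subgraph induced by the interior vertices, which is connected by condition~(viii); a short local argument then shows an idempotent power of $\psi$ fixes every interior vertex, and, since every bounded face contains an interior vertex by condition~(vii), fixes every remaining (border) vertex as well — so the only idempotent endomorphism of $G$ is the identity, i.e.\ $G$ is a core (\cite{HN92}). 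This is the step I expect to demand the real work: morally it says that a homomorphic self-map of the disk tiled by the $g$-gon faces of $G$ cannot fold, by simple-connectedness. Once $G$ is a core, $\psi\in\mathrm{Aut}(G)$; as source and target of $\varphi$ share the vertex set $V(G)$, $\varphi$ is a bijection on vertices, hence injective on edges, hence a bijection onto its edge-image.

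\textbf{Conclusions.} A short degree count shows that no straight or twisted edge of $T$ is incident to $u_0$ or $u_1$, so these are exactly the two degree-$2$ vertices of $T$ (all others of degree $3$), while $T'$ is cubic; thus $|E(T)|=(3|V(G)|-2)/2<3|V(G)|/2=|E(T')|$. For $\varphi\in\mathrm{End}(T)$: by the previous paragraph $\varphi$ is injective, hence bijective, on $E(T)$, so $\varphi\in\mathrm{Aut}(T)$; an automorphism fixes the pair $\{u_0,u_1\}$ setwise, so $\psi=\varphi|_{V(G)}\in\mathrm{Aut}(G)$ fixes $\{u_0,u_1\}$ setwise, whence $\psi=\mathrm{id}_G$ by condition~(vi) and therefore $\varphi=\mathrm{id}_T$, proving (i). For $\varphi\in\mathrm{Hom}(T',T)$: $\varphi$ would be injective on $E(T')$, impossible since $|E(T')|>|E(T)|$; so $\mathrm{Hom}(T',T)=\emptyset$. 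For $\varphi\in\mathrm{Hom}(T,T')$: $\varphi$ identifies $T$ with $T'$ minus a single edge $e^{*}$, and since $T'$ is cubic while $T$ has exactly the degree-$2$ vertices $u_0,u_1$, the edge $e^{*}$ joins $\psi(u_0)$ and $\psi(u_1)$; as $\psi\in\mathrm{Aut}(G)$ and $\mathrm{dist}_G(u_0,u_1)\ge g$ (condition~(v) of Lemma~\ref{lem:tiling_factor_girth}), $e^{*}$ is not an edge of $G$, hence a straight edge of $T'$, so $\psi(u_1)=-\psi(u_0)$; and because $\psi$ maps $E(G)$ bijectively onto $E(G)$, $\varphi$ must send each twisted edge $\{u_{2i},u_{2i+1}\}$ of $T$ ($1\le i\le h-1$) to an edge of $T'$ outside $E(G)$ — a straight edge — giving $\psi(u_{2i+1})=-\psi(u_{2i})$ for all $0\le i\le h-1$. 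Then $\tau:=\psi^{-1}\circ(-\mathrm{id})\circ\psi$, using the central symmetry of $G$ (condition~(iii)), is an involution in $\mathrm{Aut}(G)$ with $\tau(u_{2i})=u_{2i+1}$; hence $\tau$ fixes $\{u_0,u_1\}$ setwise and so $\tau=\mathrm{id}_G$ by condition~(vi), contradicting $\tau(u_0)=u_1$. Therefore $\mathrm{Hom}(T,T')=\emptyset$, proving (ii). The only genuine difficulty is establishing that $G$ is a core; the rest is book-keeping once Lemma~\ref{lem:tiling_factor_girth} and the relevant definitions are in hand.
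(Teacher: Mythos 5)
Your proof is correct and follows the same overall structure as the paper's: reduce any homomorphism between elements of $\{T,T'\}$ to an endomorphism of $G$ via Lemma~\ref{lem:tiling_factor_girth} and the fact that bounded faces of $G$ cover $E(G)$; show that $G$ is a core by analysing the three $g$-cycles through an interior vertex; then finish with the edge count $|E(T)|<|E(T')|$ and condition~(vi). The core step is the substantive one in both. You phrase it in terms of an idempotent power $\rho$ of $\psi$ and local injectivity — $\rho$ fixes some interior vertex, local injectivity forces the induced permutation of its closed neighbourhood to be the identity (an idempotent permutation), and connectivity of the interior (condition~(viii)) plus condition~(vii) propagate this to all of $G$. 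The paper takes a retraction $\varphi:G\to H$ directly and shows that $\varphi(C_{1,2}),\varphi(C_{2,3}),\varphi(C_{1,3})$ are three distinct $g$-cycles, so if $x\in H$ is interior then $N(x)\subseteq H$. These are two presentations of the same idea; your sketch is sound, though you wisely flag that it is the step demanding real work.

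The one genuinely different detail is the final contradiction for $\mathrm{Hom}(T,T')=\emptyset$. You identify the single missing edge $e^*$ as $\{\psi(u_0),\psi(u_1)\}$ by a degree count, show it is a straight edge so $\psi(u_1)=-\psi(u_0)$, and conjugate the antipodal automorphism to get the involution $\tau=\psi^{-1}\circ(-\mathrm{id})\circ\psi$ with $\tau(u_0)=u_1$; condition~(vi) forces $\tau=\mathrm{id}$, contradiction. The paper instead takes a twisted edge $\{u_2,u_3\}$ of $T$, argues by the distance condition that its image is a straight edge of $T'$ so $\psi(u_2)=-\psi(u_3)$, then uses that $\psi$ restricts to an automorphism of $\partial G$ (boundary edges lie on a single $g$-cycle) commuting with the antipodal rotation, yielding $u_2=-u_3$ and contradicting condition~(v). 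Both close the gap; yours leans only on condition~(vi) (plus the invertibility of $\psi$, which again rests on $G$ being a core), while the paper's uses conditions~(iii) and~(v) and is more local, needing only one twisted edge rather than the full bijectivity of $\varphi$ on edges.
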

\begin{proof} Let $G$, $U$, $T$ and $T'$ be as above. We shall see first that $G$ is a core. Let $H$ be a retract of $G$ and $\varphi:G\rightarrow H$ a retraction. Given an interior vertex $x$, let $x_1,x_2,x_3$ be its neighbours. Recall that, by Observation~\ref{obs:tiling_girth}, shortest odd cycles of $G$ correspond to its bounded faces. For $1\leq i<j\leq 3$, let $C_{i,j}$ be the $g$-cycle through $x_i,x,x_j$. If $\varphi(C_{1,2})=\varphi(C_{2,3})$ then $\varphi(x_1)=\varphi(x_3)$, contradicting that $\varphi(C_{1,3})$ is a $g$-cycle, so $\varphi(C_{1,2}),\varphi(C_{2,3}),\varphi(C_{1,3})$ are distinct $g$-cycles of $H$. Therefore, whenever $x$ is a vertex of the retract $H$, $\varphi$ is the identity on $C_{1,2},C_{2,3},C_{1,3}$, and in particular $x_1,x_2,x_3$ are also in the retract. Since border vertices belong to less than three $g$-cycles, $H$ has an interior vertex. Hence, by (viii) every interior vertex $x$ is in $H$. Since the three $g$-cycles through $x$ are in $H$ as well, by (vii) $H$ is the whole $G$. In other words, $G$ is a core.

Now, let $T_1,T_2\in\{T,T'\}$ and $\psi\in\mathrm{Hom}(T_1,T_2)$. Since every edge $e$ of $G$ is in a shortest odd cycle, by Lemma~\ref{lem:tiling_factor_girth} $\psi(e)$ is an edge of $G$. This implies that $\psi\in\End(G)=\Aut(G)$. Since $T$ has less edges than $T'$, the case $T_1=T'$ and $T_2=T$ can be discarded, and we may assume that $T_1=T$. If $T_2=T'$, then $\{\psi(u_2),\psi(u_3)\}$ is an edge of $T'$, but not of $G$, because $\mathrm{dist}_G(\psi(u_2),\psi(u_3))\geq g$.  Hence $\psi(u_2)=-\psi(u_3)$. Since the edges of $\partial G$ are in only one $g$-cycle, $\psi\in\Aut(\partial G)$, and this leads to $u_2=-u_3$, a contradiction. Finally, if $T_2=T$, $\psi\in\Aut(T)$, which implies that the set $\{u_0,u_1\}$ of vertices of degree $2$ is mapped to itself. By (vi), $\psi$ is the identity.
\end{proof}

In what follows we give concrete examples of $g$-tiling factors. Let $g\geq 7$ be an odd integer, and let $G(g,1)$ be a plane graph consisting of two $g$-gons sharing an edge. Given $i\geq 1$, assume that $G(g,i)$ is a connected $g$-tiling graph and construct from it a new plane graph $G(g,i+1)$ as follows. 
\begin{itemize}
   \item[] \textit{Step 1.} For every degree $2$ vertex $x$ of $G(g,i)$ which is adjacent to a degree $3$ vertex, add a new vertex $l(x)$ in the interior of the unbounded face and join it to $x$ without crossings.
   \item[] \textit{Step 2.} Join two of these new vertices $l(x),l(y)$ whenever $x,y$ are non-adjacent but consecutive degree $2$ vertices in $\partial G(g,i)$ (i.e., there is a path $P_{x,y}$ in $\partial G(g,i)$ of length at least $2$ connecting $l(x)$ and $l(y)$ and using no more vertices of degree $2$); do it without crossings.
   \item[] \textit{Step 3.} Subdivide each edge $\{l(x),l(y)\}$ enough so that the bounded face it forms has $g$ vertices (that is, $g-\mathrm{length}(P_{x,y})-3\geq g-6$ times; see Lemma~\ref{lem:tiling_graph_well-defined}). 
\end{itemize}
See Figure~\ref{fig:tiling} for a depiction of $G(7,5)$. We should be wary that $G(g,i)$ is \emph{well-defined} in the sense that these instructions can be followed without creating any crossings, and that in Step~3 the bounded face formed by $\{l(x),l(y)\}$ has not already too many vertices. This is the purpose of the following:

\begin{lemma}\label{lem:tiling_graph_well-defined} Assume that, for $1\leq j\leq i$, $G(g,j)$ is a well-defined connected $g$-tiling graph. Then $G(g,i+1)$ is a well-defined connected $g$-tiling graph.
\end{lemma}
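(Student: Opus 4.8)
The statement is an induction on $i$, and since $G(g,1)$ (two $g$-gons sharing an edge) is evidently a well-defined connected $g$-tiling graph, only the inductive step needs attention: assuming $G(g,j)$ is a well-defined connected $g$-tiling graph for every $j\le i$, I want to check that Steps~1--3 applied to $G(g,i)$ produce a well-defined connected $g$-tiling graph $G(g,i+1)$. The plan is to verify the four defining properties of a $g$-tiling graph in turn, after first arguing that the construction never forces a crossing and that the subdivision counts in Step~3 are nonnegative.

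First I would set up the planar picture. The unbounded face of $G(g,i)$ is bounded by a closed walk $W$; by property (iii) of $g$-tiling graphs its vertices have degree $2$ or $3$. Steps~1 and~2 only add vertices and edges in the interior of the old unbounded face, and the new edges $\{x,l(x)\}$ and $\{l(x),l(y)\}$ (together with the arcs of $W$ between consecutive degree-$2$ vertices) bound new regions that tile an annular strip just outside the old boundary; drawing them in the cyclic order inherited from $W$ avoids crossings, which is exactly why the degree-$2$ vertices adjacent to a degree-$3$ vertex are the right ones to push out. The key numerical point for Step~3 is that if $x,y$ are consecutive degree-$2$ vertices on $\partial G(g,i)$ joined by a path $P_{x,y}\subseteq\partial G(g,i)$ through only degree-$3$ internal vertices, then $\mathrm{length}(P_{x,y})\le g-3$: this is because $P_{x,y}$ lies on the boundary of some bounded face of $G(g,i)$, which has exactly $g$ vertices by property (i), and that face must also contain $l(x)$'s old attachment plus two more boundary vertices beyond $x$ and $y$ — so the new bounded face formed by $\{l(x),l(y)\}$ after subdividing it $g-\mathrm{length}(P_{x,y})-3$ times has exactly $(\mathrm{length}(P_{x,y})+1) + 1 + (g-\mathrm{length}(P_{x,y})-3) + 1 = g$ vertices, and the subdivision count is $\ge g-6\ge 1$ since $g\ge 7$. (I should double-check the exact off-by-one bookkeeping of which vertices of $P_{x,y}$ and which of the two $l$-vertices are counted, against the precise wording in Step~3; this is the routine-but-fiddly part.)

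Next I would check the $g$-tiling properties of $G(g,i+1)$. Property (i): the bounded faces of $G(g,i+1)$ are exactly the bounded faces of $G(g,i)$ (each still with $g$ vertices) together with the new faces created in the annular strip, and each of the latter has $g$ vertices by the Step~3 choice just discussed. Property (ii): every old edge still lies on two faces — old interior edges keep their two old faces, and old boundary edges now have an old bounded face on one side and a new strip-face on the other; every new edge lies on two new strip-faces by construction. Property (iii)/(iv): every old interior vertex keeps degree $3$; every old boundary vertex of degree $3$ becomes interior with degree $3$ (it gains no new edge); every old boundary degree-$2$ vertex $x$ adjacent to a degree-$3$ vertex gains exactly one edge $\{x,l(x)\}$, so it now has degree $3$ and becomes interior; the new vertices $l(x)$ have degree $3$ and become interior, except the subdivision vertices on the $\{l(x),l(y)\}$ arcs, which have degree $2$ and lie on the new unbounded face — and all vertices of the new unbounded face are exactly these degree-$2$ subdivision vertices plus possibly some old degree-$2$ boundary vertices that were \emph{not} adjacent to any degree-$3$ vertex, all of which have degree $2$ or $3$. (One must confirm that after the step the new unbounded boundary is again a well-defined face, i.e. a closed walk; this follows from the annular structure.) Connectivity of $G(g,i+1)$ is immediate: every new vertex is joined by a path to an old vertex, and $G(g,i)$ is connected by the induction hypothesis.

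The main obstacle I anticipate is not any single hard idea but the planarity/combinatorial bookkeeping around which degree-$2$ boundary vertices get pushed out and how consecutive ones get linked — in particular making precise the claim ``consecutive degree-$2$ vertices are joined along $\partial G(g,i)$ by a path of length at most $g-3$ through degree-$3$ vertices,'' which is what makes Step~3's subdivision count both nonnegative and correct, and checking that the resulting new unbounded face genuinely is a single face so that property (ii) survives for the next iteration. Once that is pinned down, verifying (i)--(iv) is a face-by-face and vertex-by-vertex check as sketched above, and the induction closes.
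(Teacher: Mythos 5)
The crux of this lemma is precisely the numerical claim you flag as "routine-but-fiddly," and your proposed argument for it does not work. You assert that $P_{x,y}$ lies on the boundary of a single bounded face of $G(g,i)$. This is false whenever $\mathrm{length}(P_{x,y})\geq 2$: each internal vertex $v$ of $P_{x,y}$ has degree~3, hence carries an interior edge separating the two bounded faces incident to $v$, so the two edges of $P_{x,y}$ at $v$ lie on \emph{different} bounded faces. The path zigzags across a sequence of faces, and no single face size bound applies. Moreover, even if it did, a bound of the form $\mathrm{length}(P_{x,y})\le g-3$ would only make the subdivision count $g-\mathrm{length}(P_{x,y})-3$ nonnegative; you then assert this count is $\ge g-6$, which requires the much stronger $\mathrm{length}(P_{x,y})\le 3$, and that does not follow from anything you wrote.

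What the paper actually does is prove (by induction, using the inductive hypothesis for all $j\le i$) the structural Claim that \emph{every degree-$3$ border vertex of $G(g,i)$ is adjacent to a degree-$2$ vertex which itself has a degree-$2$ neighbour}. Applied along $P_{x,y}=x\,v_1\cdots v_k\,y$, this forces $k\le 2$, hence $\mathrm{length}(P_{x,y})\le 3$ and the subdivision count is $\ge g-6\ge 1$. This is the missing idea: the control comes from a local degree-sequence property of $\partial G(g,i)$ propagated through the induction (and it is stated in the strong form because it is also needed in later lemmas of the section), not from a face-counting argument. As a secondary issue, some of your degree/interiority bookkeeping is off --- for instance, an old degree-$2$ boundary vertex $x$ adjacent to a degree-$3$ vertex gains $\{x,l(x)\}$ and becomes degree~$3$ but typically \emph{remains} on the new unbounded face, and the $l(x)$'s generically have degree~$2$ (not~$3$) since Step~2 joins $l(x)$ only across a degree-$3$ vertex --- but those slips are repairable; the gap in establishing the subdivision bound is the substantive one.
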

\begin{proof} Since $G(g,i)$ is a connected $g$-tiling graph, $\partial G(g,i)$ is a cycle, so it makes sense to talk about consecutive degree $2$ vertices of $\partial G(g,i)$. Now it should be clear that it is possible to preserve the absence of crossings while performing the steps above. Moreover, the following claim certifies that in the construction of $G(g,i+1)$ there is no issue with Step 3, because it implies that $g-6$ subdivisions are always required. (It is stated in a strong form that will be useful for later.) 

\begin{claim}\label{claim:two_deg2_neighbours} \textit{Every border vertex of $G(g,i)$ of degree 3 is adjacent to a degree $2$ vertex which also has a degree $2$ neighbour. In particular, there is an injection from the set of degree 3 vertices of $\partial G(g,i)$ to the set of degree $3$ vertices of $\partial G(g,i+1)$ which are already in $G(g,i)$. (See Figure~\ref{fig:three_extensions} for an illustration.)}
\end{claim}
\noindent\textit{Proof of Claim~\ref{claim:two_deg2_neighbours}.}
The claim holds for $i=1$, so assume that $i\geq 2$. Let $x$ be a border vertex of $G(g,i)$ of degree $3$. We distinguish two cases.
\begin{itemize}
   \item[] \textit{Case 1: $x$ is a vertex of $G(g,i-1)$.} In this case, $x$ has degree $2$ in $G(g,i-1)$, for otherwise it would not be in the border of $G(g,i)$. Moreover, $x$ has a neighbour $w$ of degree $3$ in $G(g,i-1)$, and its other neighbour $y$ has degree $2$ in $G(g,i-1)$, again because $x$ is in the border of $G(g,i)$. Therefore, $l(x)$ has degree $2$ in $G(g,i)$, and has a neighbour of degree $2$ because of the subdivision step (we know that, during the construction of $G(g,i)$, at least $g-6$ subdivisions are required each time).
   \item[] \textit{Case 2: $x$ is not a vertex of $G(g,i-1)$.} In this case, $x=l(y)$ for some degree $2$ vertex $y$ of $G(g,i-1)$, which has two degree $3$ neighbours $w,z$ in $G(g,i-1)$. By the induction hypothesis, $z$ has a degree $2$ neighbour $t$ with a degree $2$ neighbour. Therefore, the provisional edge $\{x,l(t)\}$ from Step 2 has to be subdivided $g-5\geq 2$ times. \qedblack
\end{itemize}

\noindent Finally, note that $G(g,i+1)$ is indeed a connected $g$-tiling graph. 
\end{proof}

\begin{figure}[h]
\centering
\includegraphics[scale=0.9]{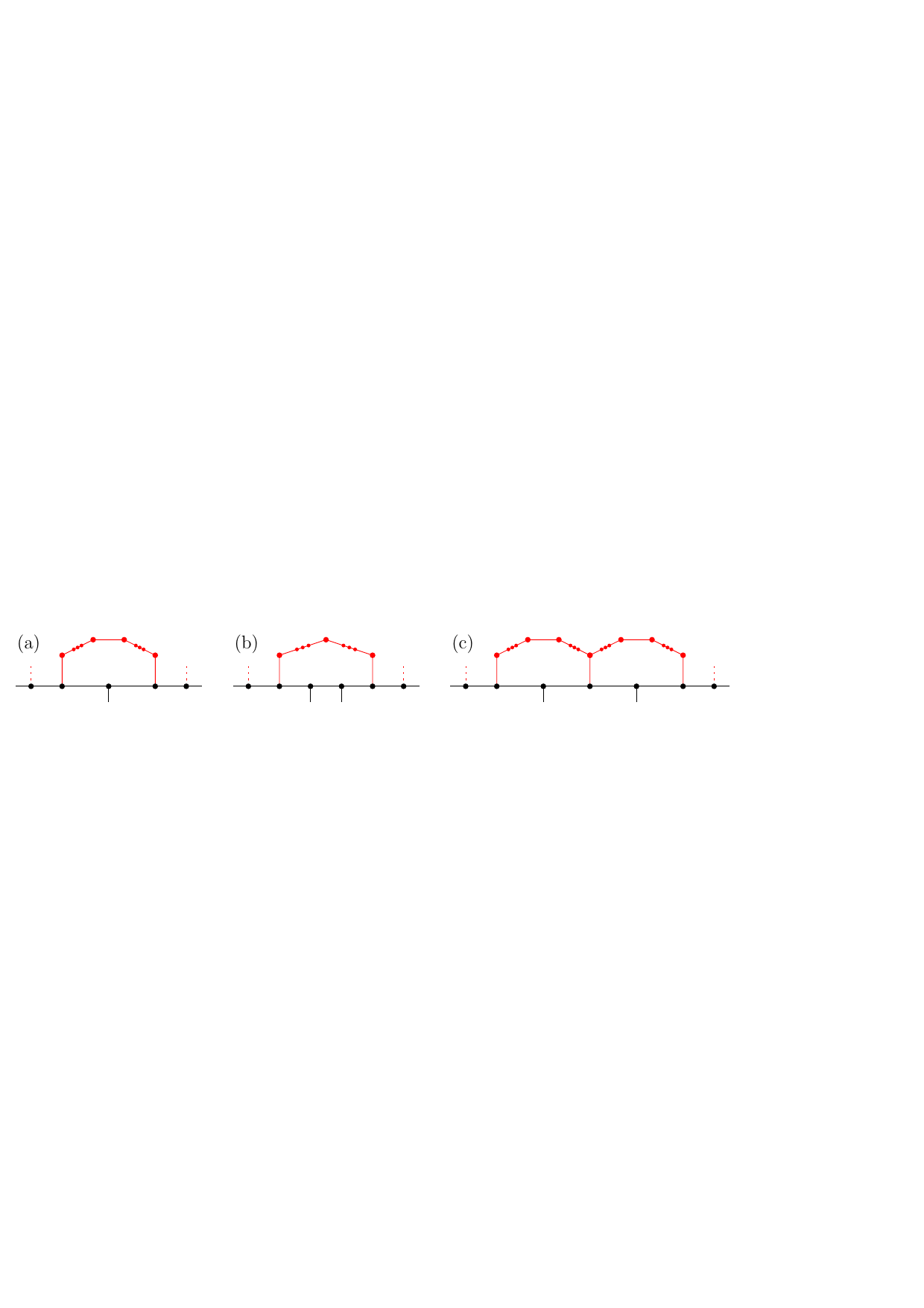}
\caption{As a consequence of Claim~\ref{claim:two_deg2_neighbours}, for any degree $3$ border vertex $x$ of $G(g,i)$, the closed ball of $\partial G(g,i)$ of radius $2$ $B_{\partial G(g,i)}(x,2)=\{y\in V(\partial G(g,i))\mid\mathrm{dist}_{\partial G(g,i)}(x,y)\leq 2\}$ has either (a) no other vertex of degree $3$ (in $G(g,i)$), (b) exactly one other vertex of degree $3$, which is adjacent to $x$, or (c) exactly one other vertex of degree $3$, which is at distance $2$ from $x$ (in $\partial G(g,i)$). Thus, the number of new vertices of $G(g,i+1)$ in the new bounded face of $x$, and their degrees, are completely determined in each of these three cases. In black, the vertices and edges of $G(g,i)$, with the unbounded face at the top. In red, the new vertices and edges of $G(g,i+1)$. The smaller dots represent the appropriate number of extra subdivisions needed. The red dotted lines represent possible other new edges of $G(g,i+1)$.}
\label{fig:three_extensions}
\end{figure}
A vertex of $G(g,i)$ is \emph{old} if it is also a vertex of $G(g,i-1)$, otherwise it is \emph{new} (all vertices of $G(g,1)$ are new). 

\begin{lemma}\label{lem:aut=Z22} For every $g\geq 7$ and $i\geq 1$, $\mathrm{Aut}(G(g,i))\cong\mathbb Z_2^2$.
\end{lemma}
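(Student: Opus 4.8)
\emph{Plan.} I would argue by induction on $i$. For the base case, $G(g,1)$ is the union of two $g$-cycles $F_1,F_2$ meeting in a single edge $e$. By Observation~\ref{obs:tiling_girth} the bounded faces of a $g$-tiling graph are exactly its shortest cycles, so every $\varphi\in\mathrm{Aut}(G(g,1))$ permutes $\{F_1,F_2\}$ and hence fixes $e=E(F_1)\cap E(F_2)$ setwise. Such a $\varphi$ is determined by the two independent bits ``$\varphi$ fixes or swaps $F_1,F_2$'' and ``$\varphi$ fixes or swaps the two endpoints of $e$'', and all four combinations are realised by reflections and rotation of the plane figure. Hence $\mathrm{Aut}(G(g,1))\cong\mathbb Z_2\times\mathbb Z_2$.

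For the inductive step, assume $\mathrm{Aut}(G(g,i))\cong\mathbb Z_2^2$. Any $\alpha\in\mathrm{Aut}(G(g,i))$ preserves vertex degrees and, by Observation~\ref{obs:tiling_girth}, the set of bounded faces, hence the unbounded face and the cycle $\partial G(g,i)$; therefore Steps 1--3 in the construction of $G(g,i+1)$ are $\alpha$-equivariant. Setting $\widehat\alpha(l(x)):=l(\alpha(x))$ and letting $\widehat\alpha$ carry the subdivision vertices of a Step-2 edge $\{l(x),l(y)\}$ onto those of $\{l(\alpha x),l(\alpha y)\}$ in order extends $\alpha$ to $\widehat\alpha\in\mathrm{Aut}(G(g,i+1))$, and $\alpha\mapsto\widehat\alpha$ is an injective group homomorphism; so $|\mathrm{Aut}(G(g,i+1))|\ge 4$. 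Moreover the new vertices and edges are recovered from $\partial G(g,i)$ canonically ($l(x)$ is the unique neighbour of $x$ outside $G(g,i)$, and each subdivision path is then forced by its endpoints and its length, as its interior vertices have degree $2$ and lie on no other such path), so that if some $\psi\in\mathrm{Aut}(G(g,i+1))$ satisfies $\psi(V(G(g,i)))=V(G(g,i))$, then $\psi=\widehat{\psi|_{G(g,i)}}$ lies in the above image and $|\mathrm{Aut}(G(g,i+1))|\le 4$, finishing the induction.

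It therefore remains to prove that every $\psi\in\mathrm{Aut}(G(g,i+1))$ satisfies $\psi(V(G(g,i)))=V(G(g,i))$. Since each vertex of $G(g,i)$ lies on a bounded face of $G(g,i)$, while a bounded face of $G(g,i)$ -- viewed inside $G(g,i+1)$ -- uses only vertices of $G(g,i)$, we have $V(G(g,i))=\bigcup\{V(F)\mid F\text{ a bounded face of }G(g,i)\}$; and these ``old'' faces are precisely the bounded faces of $G(g,i+1)$ other than the ``new'' ones created in Steps 2--3, because Steps 1--3 only add structure inside the unbounded region. As $\psi$ permutes the bounded faces of $G(g,i+1)$ (Observation~\ref{obs:tiling_girth}), it suffices to separate old from new faces by an isomorphism-invariant property. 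A new face is a $g$-cycle built from a sub-path of $\partial G(g,i)$ of length $2$ or $3$ (by the case analysis summarised in Figure~\ref{fig:three_extensions}) together with $g-3$ or $g-4$ freshly added vertices, almost all of degree $2$, arranged in a prescribed local configuration around the unbounded face; Claim~\ref{claim:two_deg2_neighbours}, tracked along the generations $G(g,1)\subseteq\cdots\subseteq G(g,i)$, shows that no bounded face of $G(g,i)$ displays this configuration once it sits inside $G(g,i+1)$. Granting this, $\psi|_{G(g,i)}\in\mathrm{Aut}(G(g,i))$, and by the previous paragraph $\psi=\widehat{\psi|_{G(g,i)}}$; hence $\mathrm{Aut}(G(g,i+1))\cong\mathrm{Aut}(G(g,i))\cong\mathbb Z_2^2$.

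The main obstacle is exactly this last separation of old from new faces, i.e.\ the stability of $V(G(g,i))$ inside $G(g,i+1)$: it is not detected by a single crude invariant (face length, number of degree-$2$ vertices on a face, number of interior vertices of a face all coincide between some old and some new faces), and instead requires careful bookkeeping, through the induction, of the degree pattern along $\partial G(g,i)$ and of how the maximal runs of degree-$2$ vertices on the boundaries of faces are progressively ``eaten into'' as further layers are glued on -- which is precisely the content that Claim~\ref{claim:two_deg2_neighbours} and Figure~\ref{fig:three_extensions} are set up to control.
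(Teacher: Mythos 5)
Your plan follows the same overall shape as the paper's proof (induction on $i$, same base case, and an extension-plus-restriction argument for the inductive step), but there is a genuine gap in the crucial step, and you yourself flag it: you never actually prove that every $\psi\in\mathrm{Aut}(G(g,i+1))$ sends $V(G(g,i))$ to itself. Your proposed route is to show that an automorphism cannot map an ``old'' bounded face to a ``new'' bounded face, and you correctly observe that no single crude face invariant (length, number of degree-$2$ vertices, number of interior vertices) separates them. You then say this ``requires careful bookkeeping'' via Claim~\ref{claim:two_deg2_neighbours} and Figure~\ref{fig:three_extensions}, but you do not supply that bookkeeping, and it is not clear how the face-by-face separation would actually be carried out: some old faces bordering $\partial G(g,i)$ can locally look very much like new faces.

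The paper sidesteps the face comparison entirely and works at the level of vertices on the boundary cycle. The two observations doing the work are: first, $V(\partial G(g,i))$ is precisely the set of vertices that have degree $2$ or have a degree-$2$ neighbour, so any automorphism preserves the border cycle; second (Claim~\ref{claim:disjoint_paths}), on $\partial G(g,i)$ the maximal runs of \emph{new} degree-$2$ vertices have length $g-4$ or $g-5$, while the maximal runs of \emph{old} degree-$2$ vertices have length strictly less than $g-5$ (with the slight adjustment when $i=2$). Since an automorphism permutes these runs while preserving their lengths, it sends new degree-$2$ vertices to new degree-$2$ vertices, hence new vertices to new and old to old (Claim~\ref{claim:restrictions}), which is exactly the stability of $V(G(g,i-1))$ you need. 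This quantitative run-length invariant is the key ingredient missing from your write-up; once it is in place, the rest of your argument (extension of $\alpha$ to $\widehat\alpha$, uniqueness of the extension, and the isomorphism $\mathrm{Aut}(G(g,i+1))\cong\mathrm{Aut}(G(g,i))$) matches the paper.
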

\begin{proof} If $i=1$, any $\varphi\in\mathrm{Aut}(G(g,i))$ induces a permutation of the two degree $3$ vertices of $G(g,i)$ and a permutation of its two bounded faces. Also, given a permutation $\sigma$ of the two degree $3$ vertices and a permutation $\tau$ of the two bounded faces, there exists a unique $\varphi\in\mathrm{Aut}(G(g,i))$ inducing $\sigma,\tau$. Therefore, $\mathrm{Aut}(G(g,1))\cong\mathbb Z_2^2$. For $i\geq 2$, we use the following claims.

\begin{claim}\label{claim:disjoint_paths} \textit{For $i\geq 3$ (resp.~for $i=2$), the subgraph of $\partial G(g,i)$ induced by the new vertices of $G(g,i)$ of degree $2$ is a disjoint union of paths of lengths $g-4$ or $g-5$ (resp.~of length $g-4$), and the subgraph of $\partial G(g,i)$ induced by the old vertices of $G(g,i)$ of degree $2$ is a disjoint union of paths of length less than $g-5$ (resp.~of length $g-5$).}
\end{claim}
\noindent\textit{Proof of Claim~\ref{claim:disjoint_paths}.}
The case $i=2$ is immediate. For $i\geq 3$, it follows from Figure~\ref{fig:three_extensions}. \qedblack

\begin{claim}\label{claim:restrictions} \textit{For $i\geq 2$, every $\varphi\in\mathrm{Aut}(G(g,i))$ sends $V(\partial G(g,i))$ and $V(G(g,i-1))$ to themselves. Consequently, the restriction of $\varphi$ to $V(\partial G(g,i))$ (resp.~$V(G(g,i-1))$) is an automorphism of $\partial G(g,i)$ (resp.~$G(g,i-1)$).}
\end{claim}
\noindent\textit{Proof of Claim~\ref{claim:restrictions}.}
Since $V(\partial G(g,i))$ is precisely the set of vertices of $G(g,i)$ either of degree $2$ or with a neighbour of degree $2$, $\varphi$ sends $V(\partial G(g,i))$ to itself. From Claim~\ref{claim:disjoint_paths} we see that $\varphi$ sends new vertices of $G(g,i)$ of degree $2$ to new vertices of $G(g,i)$ of degree $2$. This implies that, in fact, $\varphi$ sends new vertices (always of $G(g,i)$) to new vertices, so it sends old vertices to old vertices. \qedblack


By Claim~\ref{claim:restrictions}, it is enough to see that every $\psi\in\Aut(G(g,i-1))$ extends to a unique $\varphi\in\Aut(G(g,i))$. Let $\{x,y\}$ be an edge of $G(g,i)$, where $x$ is a new vertex of $G(g,i)$ and $y$ is old. Then $x=l(y)$. Also, $y$ has degree $2$ in $G(g,i-1)$, and is adjacent to a vertex of degree $3$ in $G(g,i-1)$. The same is true for $\psi(y)$; therefore, $l(\psi(y))$ is a new vertex of $G(g,i)$. Since $l(\psi(y))$ is the only new neighbour of $\psi(y)$, any $\varphi\in\Aut(G(g,i))$ extending $\psi$ must satisfy $\varphi(l(y))=l(\psi(y))$.

Let $x,x'$ be two new vertices of $G(g,i)$ of the form $l(y),l(y')$, respectively. Assume that $y,y'$ are non-adjacent but consecutive degree $2$ vertices in $\partial G(g,i-1)$. We consider the path $P$ from $x$ to $x'$ using only new vertices. Any $\varphi\in\Aut(G(g,i))$ extending $\psi$ must send $P$ to a path between $l(\psi(y))$ and $l(\psi(y'))$ using only new vertices. There is exactly one such path $Q$. Moreover, $P$ and $Q$ have the same length. Hence, such a $\varphi\in\Aut(G(g,i))$ exists, and is completely determined.
\end{proof}

A subgraph $H$ of a graph $G$ is \emph{convex} if, for any two vertices of $H$, all shortest paths between them in $G$ are also in $H$.

\begin{lemma}\label{lem:existence_tiling_factors} For every $g\geq 7$ there is a positive integer $i_g$ such, that for every $i\geq i_g$, we can find $U\subseteq V(G(g,i))$ such that $G(g,i)$ together with $U$ satisfies the conditions (i)--(viii) from the definition of $g$-tiling factor.
\end{lemma}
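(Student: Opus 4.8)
The plan is to verify the eight conditions (i)--(viii) in the definition of $g$-tiling factor for the graphs $G(g,i)$ with $i$ large, by choosing $U$ appropriately. Conditions (i) (that $g\geq 7$ is odd) is given, and (ii) (that $\partial G(g,i)$ is an even cycle) will follow from the explicit construction: by Lemma~\ref{lem:tiling_graph_well-defined} each $G(g,i)$ is a connected $g$-tiling graph, so $\partial G(g,i)$ is a cycle, and a parity count on the construction steps (each subdivision in Step~3 adds an even number of border vertices to each new face, or more carefully, an induction that tracks $|V(\partial G(g,i))|\bmod 2$) shows the length is even; if parity fails for a given $g$, one passes to $G(g,i+1)$, whose border length differs from that of $G(g,i)$ by a controlled amount. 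Condition (iii), the existence of the antipodal automorphism $x\mapsto -x$, is the most structural point: by Lemma~\ref{lem:aut=Z22} we have $\mathrm{Aut}(G(g,i))\cong\mathbb Z_2^2$, and among the three nontrivial automorphisms one must be exhibited that acts on $\partial G(g,i)$ as the antipodal map of the border cycle. For $i=1$ this is the automorphism swapping the two $g$-gons and swapping the two degree-$3$ vertices; the inductive extension in the proof of Lemma~\ref{lem:aut=Z22} shows this automorphism lifts through each step, and one checks directly from the symmetry of Steps~1--3 that it continues to act as the antipodal map on each enlarged border cycle.

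Next, conditions (iv) and (vii)--(viii) are ``largeness'' statements and this is where taking $i\geq i_g$ does the work. For (iv), $\mathrm{dist}_G(x,-x)\geq g$ for every border vertex $x$: as $i$ grows, the construction pushes the two halves of the border apart (each Step~1--3 adds a layer of new faces of size $g$ around the old border), so the distance in $G(g,i)$ between antipodal border vertices grows without bound; hence for $i$ beyond some threshold it exceeds $g$. Conditions (vii) (every $g$-cycle has an interior vertex) and (viii) (the interior-induced subgraph is connected) should follow for all sufficiently large $i$ from Observation~\ref{obs:tiling_girth} (shortest odd cycles are exactly the bounded faces) together with the fact that, for large $i$, every bounded face of $G(g,i-1)$ — and hence every $g$-cycle of the old part — is surrounded by strictly later layers, so none of its vertices lie on $\partial G(g,i)$; and the interior subgraph, being $G(g,i-1)$ minus its own border plus connecting structure, is connected by an easy induction using that $G(g,i-1)$ is connected and that Claim~\ref{claim:two_deg2_neighbours} forces the new faces to overlap the old border densely.

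Finally, conditions (v) and (vi) concern the set $U=\{u_0,\ldots,u_{2h-1}\}$ of degree-$2$ border vertices. One chooses $U$ to consist of $2h$ degree-$2$ border vertices, one from each of $2h$ distinct ``far apart'' border faces, selected symmetrically so that $u_{i+h}=-u_i$; since for large $i$ the border cycle is long and the degree-$2$ vertices are numerous (Claim~\ref{claim:disjoint_paths} guarantees long runs of them), one can space them so that $\mathrm{dist}_G(u_i,u_j)\geq g$ for $i\neq j$ and so that $h$ is an odd integer with $h\geq (g+1)/2$ — again an inequality that holds once $i\geq i_g$. For (vi), that the only automorphism fixing the set $\{u_0,u_1\}$ is the identity: since $\mathrm{Aut}(G(g,i))\cong\mathbb Z_2^2$ consists of the identity together with the antipodal map and two further involutions, it suffices to pick $u_0,u_1$ (an edge of $\partial G$) so that none of the three nontrivial automorphisms preserves $\{u_0,u_1\}$; this is arranged by choosing $u_0,u_1$ in a position that is ``generic'' with respect to the $\mathbb Z_2^2$ action — for instance not fixed by any reflection and not swapped by the antipodal map — which is possible for $i$ large because the orbit structure on border edges then has orbits of full size $4$.

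\textbf{Main obstacle.} The principal difficulty is condition (vi) together with the simultaneous requirement in (v) that $U$ be antipodally symmetric and $g$-spread while still containing the edge $\{u_0,u_1\}$ with trivial stabilizer: one must exhibit a single explicit choice of $U$ reconciling the symmetry demand ($u_{i+h}=-u_i$) with the asymmetry demand (no nontrivial automorphism fixes $\{u_0,u_1\}$), and check the distance bounds for all $\binom{2h}{2}$ pairs. Verifying that the antipodal automorphism from Lemma~\ref{lem:aut=Z22} genuinely realizes the border antipode — rather than some other order-$2$ element of $\mathbb Z_2^2$ — is the other delicate point, requiring one to track which generator of $\mathbb Z_2^2$ is which through the inductive extension.
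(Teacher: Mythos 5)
Your overall plan — verify (i)--(viii) directly, realise the antipodal automorphism via Lemma~\ref{lem:aut=Z22}, push distances up as $i$ grows, and build $U$ by spacing degree-$2$ border vertices far apart — tracks the paper's strategy. But the execution has a genuine hole in conditions (v)--(vi), which you yourself flag as the main obstacle. You treat $\{u_0,u_1\}$ as ``an edge of $\partial G$'' and appeal to the orbit structure of border \emph{edges} under the $\mathbb Z_2^2$ action to get trivial stabiliser. This is wrong on its face: by condition (v), $u_0,u_1\in U$ satisfy $\mathrm{dist}_G(u_0,u_1)\geq g$, so they are not adjacent — the edge $\{u_0,u_1\}$ is only introduced later in the twisted graph $\overline T$. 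The paper instead defines a \emph{candidate pair} $(u,u')$ of degree-$2$ border vertices with $\mathrm{dist}(u,u')\geq g$ and $\mathrm{dist}(u,-u')\geq g$ (the second inequality is needed so that the pair can be extended to an antipodally closed $U$, which you also do not check), and then makes a quantitative comparison: Claim~\ref{claim:border_determines} shows an automorphism is determined by its restriction to $\partial G(g,i)$, hence by its action on $(u,u')$, so each of the three nontrivial automorphisms can make at most $b(i)$ candidate pairs ``bad''; against the roughly $\Theta(b(i)^2)$ candidate pairs, a good pair exists for $i$ large. Without Claim~\ref{claim:border_determines} (or a substitute), your ``generic pair'' assertion is not grounded, and the size-$4$-orbit observation does not bear on the object that actually needs a trivial set-stabiliser.

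Two further steps are under-argued. For (ii), a parity count of $|V(\partial G(g,i))|$ is neither completed nor needed; the paper obtains evenness for free once the relevant automorphism is shown to restrict to a fixed-point-free order-$2$ rotation of $\partial G(g,i)$ (an order-$2$ rotation of $C_n$ forces $n$ even), and your fallback of ``passing to $G(g,i+1)$ if parity fails'' is vacuous unless you prove the parity changes. For (iv), you assert that antipodal distances ``grow without bound,'' but this requires the nontrivial facts that $G(g,i-1)$ is convex in $G(g,i)$ (so distances are stable under enlargement) and a case analysis, per Figure~\ref{fig:three_extensions}, on the component of a new border vertex in the linear forest of new vertices; your sketch skips both. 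The remaining points (iii), (vii), (viii) are in the right spirit, though your (vii) argument only discusses old faces and omits checking that each \emph{new} bounded face of $G(g,i)$ acquires an interior vertex (namely the attachment vertex $x$ of $l(x)$).
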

\begin{proof} (ii)--(iii) Recall from the proof of Lemma~\ref{lem:tiling_graph_well-defined} that $\partial G(g,i)$ is a cycle. Let $\varphi$ be the automorphism of $G(g,i)$ which permutes the two bounded faces and the two degree~$3$ vertices of $G(g,1)$. By Claim~\ref{claim:restrictions}, it exists and its restriction to the border vertices is an automorphism of $\partial G(g,i)$. If $i=1$, $|V(\partial G(g,i))|=2g-2$, and $\varphi$ sends every vertex of $\partial G(g,i)$ to its antipode. Assume that $i\geq 2$. It follows from Claim~\ref{claim:two_deg2_neighbours} that $\partial G(g,i)$ has at least three vertices of $\partial G(g,i-1)$, say $x,y,z$. By induction, the restriction of $\varphi$ to $\partial G(g,i-1)$ is the order $2$ rotation, so $\varphi$ preserves the circular ordering of $x,y,z$, which clearly is the same in $\partial G(g,i-1)$ as in $\partial G(g,i)$. Therefore $\varphi$ is an order $2$ rotation of $\partial G(g,i)$. In particular, the cycle $\partial G(g,i)$ is even.

(iv) Let us fix $g$. We will prove that $f(i)\coloneq\min \{\mathrm{dist}_{G(g,i)}(x,-x)\mid x\in V(\partial G(g,i))\}$ is an increasing unbounded function of $i$. It follows from Figure~\ref{fig:three_extensions} that $G(g,i-1)$ is a convex subgraph of $G(g,i)$. Hence the distance function on $G(g,i)$ is independent of $i$; here we denote it just by $\mathrm{dist}$. It will be enough to see that, given any new vertex $x$ of $G(g,i+1)$, $\mathrm{dist}(x,-x)>f(i)$. Indeed, since any border vertex of $G(g,i)$ is an interior vertex of $G(g,i+(g-1)/2)$, this will imply that $f(i+(g-1)/2)>f(i)$.

Let $x$ be a new vertex of $G(g,i+1)$. Claim~\ref{claim:two_deg2_neighbours} implies that $|V(\partial G(g,i+1))|\geq|V(\partial G(g,i))|\geq\ldots\geq |V(\partial G(g,2))|=4g-10$. By Figure~\ref{fig:three_extensions}, the subgraph induced by the new vertices of $G(g,i+1)$ is a disjoint union of paths of length at most $2g-8$. Therefore, if we let $C$ and $-C$ be the components of $x$ and $-x$ in this linear forest, then $C\neq -C$. We distinguish two cases.
\begin{itemize}
   \item[] \textit{Case 1: $C$ is adjacent to exactly two old vertices of $G(g,i+1)$ ((a) and (b) in Figure~\ref{fig:three_extensions}).} Let $y_1$ and $y_2$ be these two old vertices; we know that the old vertices adjacent to $-C$ are precisely $-y_1$ and $-y_2$. Since old vertices have at most one new neighbour, $y_1,y_2,-y_1,-y_2$ are pairwise different. If one geodesic between $x$ and $-x$ goes through $y_1$ and $-y_1$, we are done. Therefore, assume that $\mathrm{dist}(x,-x)=\mathrm{dist}(x,y_1)+\mathrm{dist}(y_1,-y_2)+\mathrm{dist}(-y_2,-x)$. Since $\mathrm{dist}(y_2,-y_2)\leq 3+\mathrm{dist}(y_1,-y_2)$, we have that $\mathrm{dist}(x,-x)\geq\mathrm{dist}(y_1,-y_2)+g-3\geq\mathrm{dist}(y_2,-y_2)+g-6$.
   \item[] \textit{Case 2: $C$ is adjacent to exactly three old vertices of $G(g,i+1)$ ((c) in Figure~\ref{fig:three_extensions}).} Let $y_1,y_2,y_3$ be these three vertices. We assume that $y_2$ is the one of them adjacent to the new vertex of degree $3$ and that $y_1$ shares a bounded face with $x$. Then, $-y_1,-y_2,-y_3$ are the three old vertices adjacent to $-C$, $-y_2$ is the one of them adjacent to a new vertex of degree $3$ and $-y_1$ shares a bounded face with $-x$. Again, $y_1,y_2,y_3,-y_1,-y_2,-y_3$ are pairwise different. If $y_3$ (resp.~$-y_3$) is in a geodesic between $x$ and $-x$, then so is $y_2$ (resp.~$-y_2$). Therefore any such geodesic must go through $y_1$ or $y_2$, and through $-y_1$ or $-y_2$. Thus we are in a situation like in Case~1, and the same argument applies.
\end{itemize}

(v)--(vi) We call a pair $(u,u')$ of degree $2$ border vertices with $\mathrm{dist}_{G(g,i)} (u,u')\geq g$ and $\mathrm{dist}_{G(g,i)} (u,-u')\geq g$ a \emph{candidate pair}. Assume that $g$ is fixed; we denote by $b(i)$ the number of border vertices of $G(g,i)$. Note that $b(i)\geq 2f(i)$, where $f$ is the function from part (iv). Since $f$ is increasing and unbounded, we can assume that $b(i)$ is as large as we want. By Figure~\ref{fig:three_extensions}, $G(g,i)$ has at least $\min\{\tfrac{g-3}{g-1},\tfrac{g-4}{g-2},\tfrac{2g-7}{2g-5}\}b(i)=\tfrac{g-4}{g-2}b(i)\geq\tfrac{3}{5}b(i)$ border vertices of degree $2$; let $u$ be any of them. Note that the closed ball $B_{G(g,i)}(u,g-1)$ on $G(g,i)$ with centre $u$ and radius $g-1$ has less than $2^{g}$ elements. Therefore, the number of vertices $u'$ such that $(u,u')$ is a candidate pair is at least $\tfrac{3}{5}b(i)-|B_{G(g,i)}(u,g-1)|-|B_{G(g,i)}(-u,g-1)|\geq\tfrac{3}{5}b(i)-2^{g+1}$. We conclude that there are at least $\tfrac{9}{25}b(i)^2-\tfrac{3}{5}2^{g+1}b(i)$ candidate pairs.

Let $\varphi\in\mathrm{Aut}(G(g,i))$. Recall that by Claim~\ref{claim:restrictions} the restriction of $\varphi$ to the border vertices is an automorphism of $\partial G(g,i)$. To that we can add the following.

\begin{claim}\label{claim:border_determines} \textit{For $i\geq 1$, every $\varphi\in\mathrm{Aut}(G(g,i))$ is uniquely determined by its restriction to $V(\partial G(g,i))$.}
\end{claim}
\noindent\textit{Proof of Claim~\ref{claim:border_determines}.}
For $i=1$ it is immediate. For $i\geq 2$, given $x\in V(\partial G(g,i-1))$, we distinguish two cases.
\begin{itemize}
   \item[]\textit{Case 1: $x$ has degree $2$ in $G(g,i-1)$.} In this case, $x$ is either a vertex of $\partial G(g,i)$ or the unique degree $3$ neighbour of a vertex of $\partial G(g,i)$, so $\varphi(x)$ is determined.
   \item[]\textit{Case 2: $x$ has degree $3$ in $G(g,i-1)$.} In this case $x$ is the unique interior neighbour of a vertex of $\partial G(g,i)$, and we are done by Claim~\ref{claim:restrictions}. 
\end{itemize}
Therefore, the restriction of $\varphi$ to $V(\partial G(g,i-1))$ is uniquely determined. But, by Claim~\ref{claim:restrictions}, the restriction of $\varphi$ to $V(G(g,i-1))$ is an automorphism of $G(g,i-1)$, so by induction it is uniquely determined. \qedblack\\

\noindent Let $(u,u')$ be a candidate pair. Since $u\neq u'\neq -u$, Claim~\ref{claim:border_determines} implies that $\varphi$ is determined by the image of $(u,u')$. If $\varphi(u,u')=(u,u')$ then $\varphi$ must be the identity, so, if we see that $\varphi(u,u')\neq(u',u)$ for every $\varphi\in\mathrm{Aut}(G(g,i))$, we will have shown that the pair $(u,u')$ satisfies (vi).

Given $\varphi\in\mathrm{Aut}(G(g,i))$, we say that a candidate pair $(u,u')$ is $\varphi$-\emph{bad} if $\varphi(u,u')=(u',u)$. Note that there are at most $b(i)$ $\varphi$-bad candidate pairs. By Lemma~\ref{lem:aut=Z22}, $G(g,i)$ has only three non-trivial automorphisms, so there are at least $\tfrac{9}{25}b(i)^2-\tfrac{3}{5}2^{g+1}b(i)-3b(i)$ candidate pairs satisfying (vi). We take $u_0,u_1$ so that $(u_0,u_1)$ is one of such pairs.

Now we finish the construction of $U$ recursively. Given an incomplete sequence of $2\leq j\leq h-1$ vertices $u_0,u_1,\ldots,u_{j-1}$ satisfying (v), we let $u_{j}$ be one of the remaining $\tfrac{3}{5}b(i)-2^{g+1}j$ (at least) degree $2$ border vertices at distance at least $g$ from $\{u_0,u_1,\ldots,u_{j-1},-u_0,-u_1,\ldots,-u_{j-1}\}$ (by the proof of (iv) above, any two antipodal vertices are at distance at least $g$ as well).

(vii) It follows from the definition of $G(g,i)$ that this holds for $i\geq 2$.

(viii) Clearly this is true for $i=2$. For a general $i\geq 2$, note that, by Figure~\ref{fig:three_extensions}, every border vertex of $G(g,i)$ of degree $3$ is adjacent to an interior vertex, and every vertex of degree $2$ which is interior in $G(g,i+1)$ is adjacent to a degree $3$ vertex. Since all interior vertices of $G(g,i+1)$ are of this form, or they are already interior in $G(g,i)$, the condition is satisfied by induction.
\end{proof}

\subsection{Infinite families of mutually rigid regular graphs}\label{sec:mutually_rigid}

\rigid*
\begin{proof} 
The key ingredient is the following claim.

\begin{customclaim}{1.2.1}\label{claim:strong_induction} \textit{For every $d\geq 3$ and every odd $g\geq 7$ there exists a $d$-indicator $S(d,g)$ of odd girth $g$ satisfying the hypotheses of Lemma~\ref{lem:sip_product_and_categories} (as a $1$-tuple of indicators).}
\end{customclaim}
\noindent\textit{Proof of Claim~\ref{claim:strong_induction}.}
The proof goes by induction. The base case is when $d\in\{3,4,5\}$ (see Figure~\ref{fig:basic_cases}). We consider a $g$-tiling factor $\mathbf T=(T,T')$ obtained from a $g$-tiling graph $G$, and $\overline T$ as in Lemma~\ref{lem:tiling_factor_girth}. The existence of such graphs is guaranteed by Lemma~\ref{lem:existence_tiling_factors}. Let $u_0,u_1$ be the two vertices of $T$ of degree $2$. It follows from Remark~\ref{rem:easy_properties_tiling_factors}, Lemma~\ref{lem:deficient_tiling_factors_are_rigid} and Claim~\ref{claim:U_spread} that, if $d=3$, we can take $T$, together with the distinguished pair $(u_0,u_1)$, as our $d$-indicator $S(3,g)$. For the other cases, we will use the following.

\begin{fact}\label{fact:products} For any graph $H$ without odd cycles of length $g$ or less, any $v,v'\in V(H)$ and any $f:V(H)\rightarrow\{1,2\}$,
\begin{enumerate}[label=(\roman*)]
\vspace{-1mm}
   \item every minimum odd cycle $C$ of $\mathbf T\mathbin{\underset{f}{\square}}H$ satisfies that $|\pi_H(V(C))|=1$;
   \vspace{-1mm}
   \item every path $Q$ in $\mathbf T\mathbin{\underset{f}{\square}}H$ from $(u_0,v)$ to $(u_1,v')$ has length at least $g$.
\end{enumerate}
\end{fact}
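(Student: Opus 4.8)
The plan is to prove both parts by projecting walks of $P:=\mathbf T\mathbin{\underset{f}{\square}}H$ onto its two coordinates. Write $\pi_{\mathbf T}\colon V(P)\to V(T)$ and $\pi_H\colon V(P)\to V(H)$ for the coordinate projections ($\pi_{\mathbf T}=\pi_{\mathbf G}$ with $\mathbf G=\mathbf T$ in the notation of Section~\ref{sect:cartesian}), and call an edge of $P$ a \emph{$\mathbf T$-edge} if its endpoints agree in the $H$-coordinate and an \emph{$H$-edge} otherwise. The key observation is that for any walk $W$ in $P$, the image $\pi_{\mathbf T}(W)$ contracts the $H$-edges of $W$ and sends each $\mathbf T$-edge $\{(x,u),(y,u)\}$ to the edge $\{x,y\}$ of $G_{f(u)}$, so it is a walk in the graph $(V(T),E(T)\cup E(T'))$ whose length equals the number of $\mathbf T$-edges of $W$; symmetrically $\pi_H(W)$ is a walk in $H$ whose length equals the number of $H$-edges of $W$; and $\mathrm{length}(W)$ is the sum of the two. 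The three inputs I would feed in are: $G$, $T$, $T'$ and $\overline T$ all have odd girth $g$ (Remark~\ref{rem:easy_properties_tiling_factors} and Lemma~\ref{lem:tiling_factor_girth}); $E(T)\cup E(T')\subseteq E(\overline T)$ (the graph $\overline T$ is obtained from $T$ by adding $\{u_0,u_1\}$ and the edges of $T'$); and $\{u_0,u_1\}$ is a twisted edge of $\overline T$ which lies in neither $T$ nor $T'$ (it is not in $G$ since $\mathrm{dist}_G(u_0,u_1)\ge g$, and not of the form $\{x,-x\}$ since $-u_0=u_h\ne u_1$), so that Claim~\ref{claim:U_spread} applies to it.

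For (i), I would first note that the subgraph induced by each fiber $\pi_H^{-1}(w)$ is isomorphic to $G_{f(w)}\in\{T,T'\}$, which contains a $g$-cycle; hence the odd girth of $P$ is at most $g$, so a minimum odd cycle $C$ has $\mathrm{length}(C)\le g$. Since $\mathrm{length}(C)$ is odd, at least one of the numbers "number of $\mathbf T$-edges of $C$'' and "number of $H$-edges of $C$'' is odd. If the latter is odd, then $\pi_H(C)$ is a closed walk of odd length at most $g$ in $H$, forcing an odd cycle of length at most $g$ in $H$, contrary to hypothesis. Hence the former is odd, so $\pi_{\mathbf T}(C)$ is a closed walk of odd length in $(V(T),E(T)\cup E(T'))\subseteq\overline T$, which therefore contains an odd cycle of $\overline T$, and thus $\mathrm{length}(\pi_{\mathbf T}(C))\ge g$. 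Combining, $g\ge\mathrm{length}(C)=\mathrm{length}(\pi_{\mathbf T}(C))+\mathrm{length}(\pi_H(C))\ge g+\mathrm{length}(\pi_H(C))$, so $\mathrm{length}(\pi_H(C))=0$, which means $C$ uses no $H$-edge, i.e.\ $|\pi_H(V(C))|=1$.

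For (ii), given a path $Q$ in $P$ from $(u_0,v)$ to $(u_1,v')$, I would pass to $\pi_{\mathbf T}(Q)$, a walk from $u_0$ to $u_1$ in $(V(T),E(T)\cup E(T'))$. Since $\{u_0,u_1\}$ is not an edge of that graph, the walk avoids it, so a path contained in it is a path of $\overline T$ between $u_0$ and $u_1$ other than the edge $\{u_0,u_1\}$ itself, and hence has length at least $g$ by Claim~\ref{claim:U_spread}. Therefore $\mathrm{length}(Q)\ge\mathrm{length}(\pi_{\mathbf T}(Q))\ge g$.

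I do not expect a real obstacle here: the argument is a short double projection. The only points needing care are bookkeeping — verifying that $(V(T),E(T)\cup E(T'))$ really is a subgraph of $\overline T$ missing exactly the twisted edge $\{u_0,u_1\}$ (which is what makes Claim~\ref{claim:U_spread} usable in (ii) and the odd girth of $\overline T$ usable in (i)), and the standard fact that a closed walk of odd length carries an odd cycle of the ambient graph. The sole degenerate case, $H$ empty, makes $P$ empty so that (i) is vacuous, and cannot occur in (ii) since $v,v'\in V(H)$.
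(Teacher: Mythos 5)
Your proof is correct and follows essentially the same strategy as the paper's: project walks onto the $\mathbf T$- and $H$-coordinates, use parity on the $H$-projection to conclude that the $\mathbf T$-projection of a minimum odd cycle is a closed odd walk in $\overline T$ (hence of length $\ge g$ by Lemma~\ref{lem:tiling_factor_girth}), and invoke Claim~\ref{claim:U_spread} on the $\mathbf T$-projection of a $(u_0,v)$--$(u_1,v')$ path. The one cosmetic difference is that in (i) the paper gets the parity of $H$-edges by noting that $H[\pi_H(V(C))]$ is bipartite, whereas you derive it from the contradiction with a short odd closed walk in $H$; both hinge on the same bound $\mathrm{length}(C)\le g$, which you usefully make explicit.
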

\vspace{-2mm}
\noindent\textit{Proof of Fact~\ref{fact:products}.} (i) Recall that $T$ and $T'$ are subgraphs of $\overline T$. Since $H$ has no odd cycles of length $g$ or less, $H[\pi_H(V(C))]$ is bipartite. In particular, $C$ uses an even number of edges $e$ with $|\pi_H(e)|=2$. Hence, after eliminating from $\pi_{\mathbf T}(C)$ the eventual consecutive repeated vertices, we obtain a closed odd walk in $\overline T$. By Lemma~\ref{lem:tiling_factor_girth}, it has length $g$ and $|\pi_H(V(C))|=1$.

(ii) 
Eliminating consecutive repeated vertices from $\pi_{\mathbf T}(Q)$, we obtain a walk from $u_0$ to $u_1$ in $\overline T\backslash\{\{u_0,u_1\}\}$. By Claim~\ref{claim:U_spread}, $\mathrm{length}(Q)\geq\mathrm{dist}_{\overline T\backslash\{\{u_0,u_1\}\}}(u_0,u_1)\geq g$.
\hfill$\blacklozenge$

\begin{figure}[h]
\centering
\includegraphics[scale=0.9]{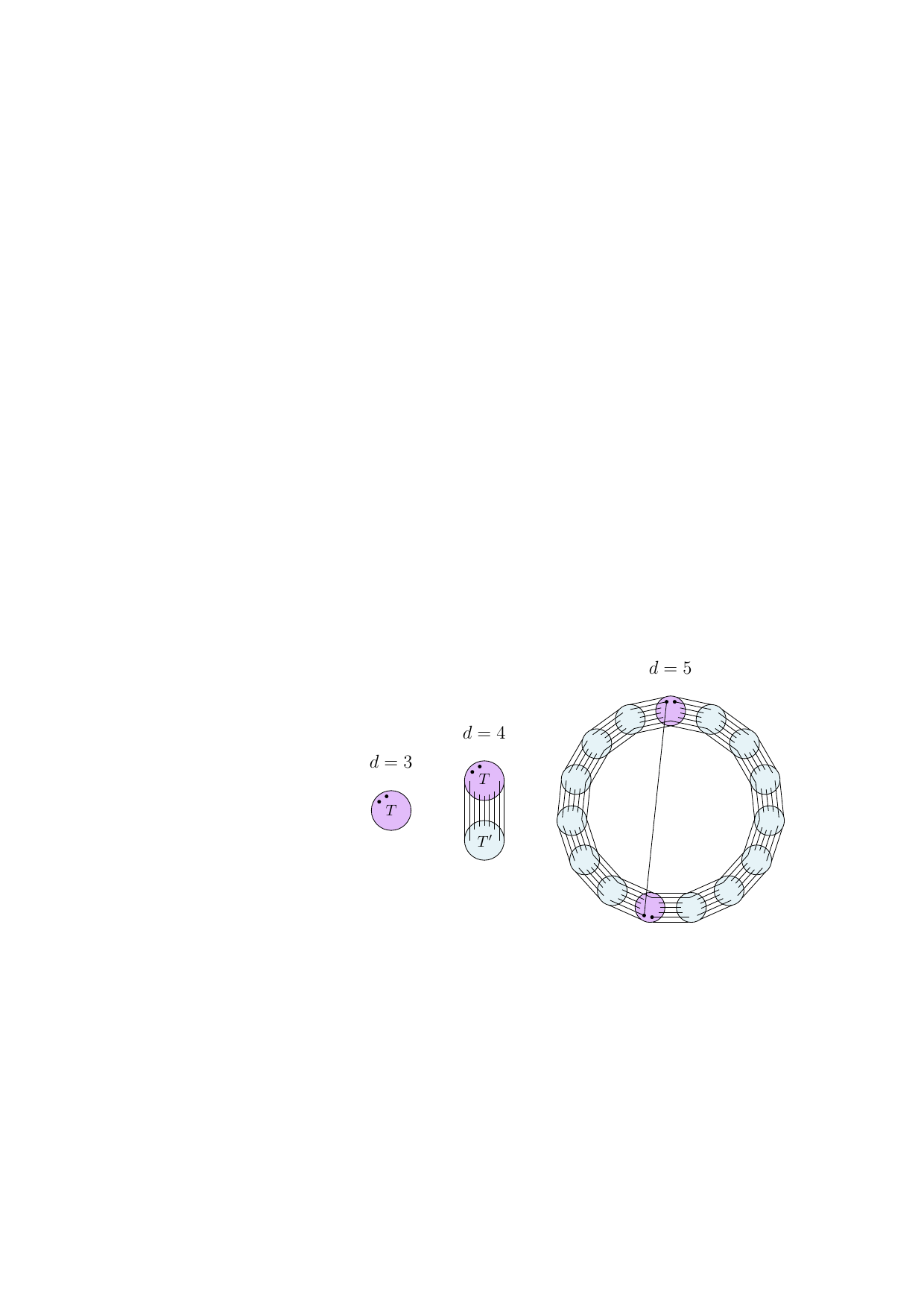}
\caption{The three basic cases for $g=7$. Subgraphs isomorphic to $T$ appear in purple, and those isomorphic to $T'$ appear in light blue. The marked vertices represent the corresponding copies of $u_0$ and $u_1$.}
\label{fig:basic_cases}
\end{figure}

For the case $d=4$, we consider the complete graph $K_2$ of order two, a vertex $v$ of it, and the mapping $f:V(K_2)\rightarrow\{1,2\}$ defined by $f^{-1}(1)=\{v\}$. By Fact~\ref{fact:products}, Remark~\ref{rem:easy_properties_tiling_factors} and Lemma~\ref{lem:deficient_tiling_factors_are_rigid}, the graph $I=\mathbf T\mathbin{\underset{f}\square} K_2$ satisfies the hypotheses (i)--(iv), (vi) and (vii) of Lemma~\ref{lem:deficient_Cartesian_product}. Let us see that (v) holds as well. It will be enough to show the following.

\begin{fact}\label{fact:feynman} For any $\varphi,\psi\in\End(T')$, there exists a $x\in V(\overline T)$ such that $\{\varphi(x),\psi(x)\}\notin E(\overline T)$.
\end{fact}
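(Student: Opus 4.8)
The plan is to prove Fact~\ref{fact:feynman} by contradiction: suppose there exist $\varphi,\psi\in\End(T')$ such that $\{\varphi(x),\psi(x)\}\in E(\overline T)$ for every $x\in V(\overline T)$. The first observation is that $\End(T')=\Aut(T')$ by Lemma~\ref{lem:deficient_tiling_factors_are_rigid} combined with the fact (established in that lemma's proof) that $G$ is a core and every edge of $G$ lies in a shortest odd cycle; in fact $\Aut(T')\cong\mathbb Z_2^2$ by Lemma~\ref{lem:aut=Z22} and the symmetry structure of $T'=G\cup\{\{x,-x\}\mid\deg_G(x)=2\}$. So $\varphi,\psi$ are among finitely many explicit maps. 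Replacing the pair $(\varphi,\psi)$ by $(\mathrm{id},\psi\varphi^{-1})$ (the edge condition is preserved under pre-composition with any automorphism, since automorphisms of $\overline T$ restrict appropriately — or, more safely, work directly with the hypothesis that $x\mapsto\{\varphi(x),\psi(x)\}$ lands in $E(\overline T)$), we reduce to showing: there is no nonidentity $\alpha\in\Aut(T')$ with $\{x,\alpha(x)\}\in E(\overline T)$ for all $x$.

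Next I would use the girth obstruction. If $\{x,\alpha(x)\}\in E(\overline T)$ for every vertex $x$, then $x$ and $\alpha(x)$ are always adjacent, so in particular $\alpha$ has no fixed point (no loops in $\overline T$) and the ``diagonal'' $\{\{x,\alpha(x)\}\mid x\in V\}$ together with suitable edges of $\overline T$ produces short closed walks. Concretely, pick an edge $\{x,y\}$ of $G$ lying in a shortest odd cycle $C$ of $\overline T$ (which has length $g$ and lies in $G$ by Lemma~\ref{lem:tiling_factor_girth}/Observation~\ref{obs:tiling_girth}); then $x,\alpha(x),y,\alpha(y)$ with the edges $\{x,\alpha(x)\}$, $\{x,y\}$ (if $\alpha(x)=y$ we are already done exhibiting something forced), $\{\alpha(x),\alpha(y)\}$, $\{y,\alpha(y)\}$ form a closed walk of length $4$, forcing $\alpha(x)$ adjacent to $x$ at bounded distance; iterating along $C$ one gets a closed walk of length at most $2g$ in $\overline T$ that is odd (since $C$ is odd and the $\alpha$-edges come in pairs around it, the parity flips), contradicting that the odd girth of $\overline T$ is $g$ — unless $\alpha$ fixes $C$ setwise, in which case $\alpha$ acts on $C$ as a nontrivial automorphism of a $g$-cycle moving each vertex to a neighbour, which is impossible for odd $g$ since the only such candidate is a reflection and reflections of odd cycles fix a vertex. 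The main obstacle is making this ``propagation along a shortest odd cycle'' argument airtight: one must control what happens when $\alpha$ does fix some $g$-cycles (e.g. the two distinguished faces of the original $G(g,1)$) and ensure that the three nonidentity elements of $\Aut(T')\cong\mathbb Z_2^2$, which are built from the antipodal map $x\mapsto -x$ and the two face-swapping symmetries, each move \emph{some} vertex to a non-neighbour.

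An alternative, perhaps cleaner, route that I would pursue in parallel: use property (iv) of Lemma~\ref{lem:tiling_factor_girth}, namely $\mathrm{dist}_G(x,-x)\geq g$, together with the structure of $\Aut(T')$. Every nonidentity $\alpha\in\Aut(T')$ restricts to a nonidentity automorphism of $\partial G$ (a rotation or reflection of an even cycle), hence moves some border vertex $u$ of degree $2$ to a vertex $\alpha(u)$ with $\mathrm{dist}_G(u,\alpha(u))$ large; by (iv) and the distance bounds from Lemma~\ref{lem:existence_tiling_factors}(iv), one shows $\{u,\alpha(u)\}$ is not an edge of $\overline T$ (its endpoints are at $G$-distance $\geq g\geq 7$, and the only extra edges of $\overline T$ beyond $E(G)$ are straight edges $\{x,-x\}$ and twisted edges within $U$, all of which have controlled behaviour — a straight edge $\{u,\alpha(u)\}$ would force $\alpha(u)=-u$, i.e. $\alpha$ acts as the antipodal map on that vertex, and one checks the antipodal map itself moves many degree-$2$ border vertices outside $U$ to non-neighbours, e.g. any $x$ with $\deg_G(x)=3$). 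I expect the bookkeeping here — enumerating the three nonidentity automorphisms and certifying for each a ``witness'' vertex sent to a non-neighbour — to be the real work, but it is finite and structural, and the distance inequality $\mathrm{dist}_G(x,-x)\geq g$ does the heavy lifting.
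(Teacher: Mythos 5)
Your proposal does not deliver a proof; it sketches two routes, both of which are left unfinished, and one of which contains a real error.

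Regarding your first (girth-obstruction) route: the claim that ``iterating along $C$ one gets a closed walk of length at most $2g$ in $\overline T$ that is odd, contradicting that the odd girth of $\overline T$ is $g$'' is not a contradiction. The odd girth of $\overline T$ is exactly $g$, so the existence of a closed odd walk of length between $g$ and $2g$ is entirely consistent with it. You would need to produce a closed odd walk of length strictly less than $g$, and the construction as described does not do that. The parenthetical remark about reflections of odd cycles fixing a vertex is correct and actually contains the germ of a much shorter argument for two of the three nonidentity automorphisms (see below), but as stated it only applies under the extra assumption that $\alpha$ preserves a particular $g$-cycle setwise.

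Your second (distance) route is closer to viable, and is genuinely different from the paper's. After the reduction to showing that no nonidentity $\alpha\in\Aut(T')\cong\mathbb Z_2^2$ satisfies $\{x,\alpha(x)\}\in E(\overline T)$ for all $x$ (which is a correct reduction, since pre-composing with $\varphi^{-1}$ preserves the hypothesis), one does have a finite case analysis. However, you do not execute it, you misstate one detail (a vertex ``$x$ with $\deg_G(x)=3$'' is not a degree-$2$ border vertex, so your parenthetical witness is garbled), and the genuinely easy observation is missing: two of the three nonidentity automorphisms of $G(g,i)$ have fixed vertices (the face-swap $\sigma$ fixes the two degree-$3$ vertices of $G(g,1)$, and the vertex-swap $\tau$ fixes the two border vertices $a_{(g-1)/2}$ and $b_{(g-1)/2}$), so for those any fixed vertex $u$ gives $\{u,\alpha(u)\}=\{u,u\}\notin E(\overline T)$ immediately, with no distance computation. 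Only the antipodal map $-$ needs the distance bound, and there condition~(iv) of Lemma~\ref{lem:tiling_factor_girth} applied to a degree-$3$ border vertex $x$ gives $\mathrm{dist}_G(x,-x)\geq g$, and such an $x$ is incident only to $G$-edges in $\overline T$, so $\{x,-x\}\notin E(\overline T)$.

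The paper's proof takes an entirely different, purely local route that avoids any appeal to the global structure of $\Aut(T')$: it fixes an interior vertex $x$ with neighbours $x_1,x_2,x_3$ (all of degree $3$ in $G$, hence incident in $\overline T$ only to edges of $G$), observes that the assumption $\{\varphi(x_i),\psi(x_i)\}\in E(\overline T)$ for all $i$ forces, after relabelling, $\varphi(x_1)=\psi(x)$ and $\psi(x_2)\neq\varphi(x)$, and then exhibits the $4$-cycle $\varphi(x),\varphi(x_2),\psi(x_2),\psi(x)$ with all four edges in $E(G)$, contradicting that $G$ has girth $g\geq 7$. This is shorter, does not require knowing $\Aut(T')\cong\mathbb Z_2^2$, and sidesteps the case analysis that your second route would require you to carry out.
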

\noindent\textit{Proof of Fact~\ref{fact:feynman}.}
Let $x$ be an interior vertex of $G$, and let $x_1,x_2,x_3$ be its three neighbours. Note (or recall from the proof of Lemma~\ref{lem:deficient_tiling_factors_are_rigid}) that $\varphi,\psi$ send interior vertices of $G$ to interior vertices, and no two neighbours of an interior vertex have the same image by $\varphi,\psi$. Also, the neighbours of an interior vertex have degree $3$ in $G$, so in $\overline T$ they are incident only to edges of $G$. Suppose that $\{\varphi(x),\psi(x)\},\{\varphi(x_1),\psi(x_1)\},\{\varphi(x_2),\psi(x_2)\},\{\varphi(x_3),\psi(x_3)\}\in E(\overline T)$. Without loss of generality, $\varphi(x_1)=\psi(x)$ and $\psi(x_2)\neq\varphi(x)$ (see Figure~\ref{fig:Feynman_diagram} for an illustration). Note that the sequence $\varphi(x),\varphi(x_2),\psi(x_2),\psi(x)$ defines a closed walk $W$ in $\overline T$. Moreover, all these vertices are different, so in fact $W$ is a $4$-cycle. Since $G$ has girth $g$ (see Observation~\ref{obs:tiling_girth}), one of the edges of $W$ is not in $E(G)$. That is a contradiction.
\hfill$\blacklozenge$

\begin{figure}[h]
\centering
\includegraphics[scale=1]{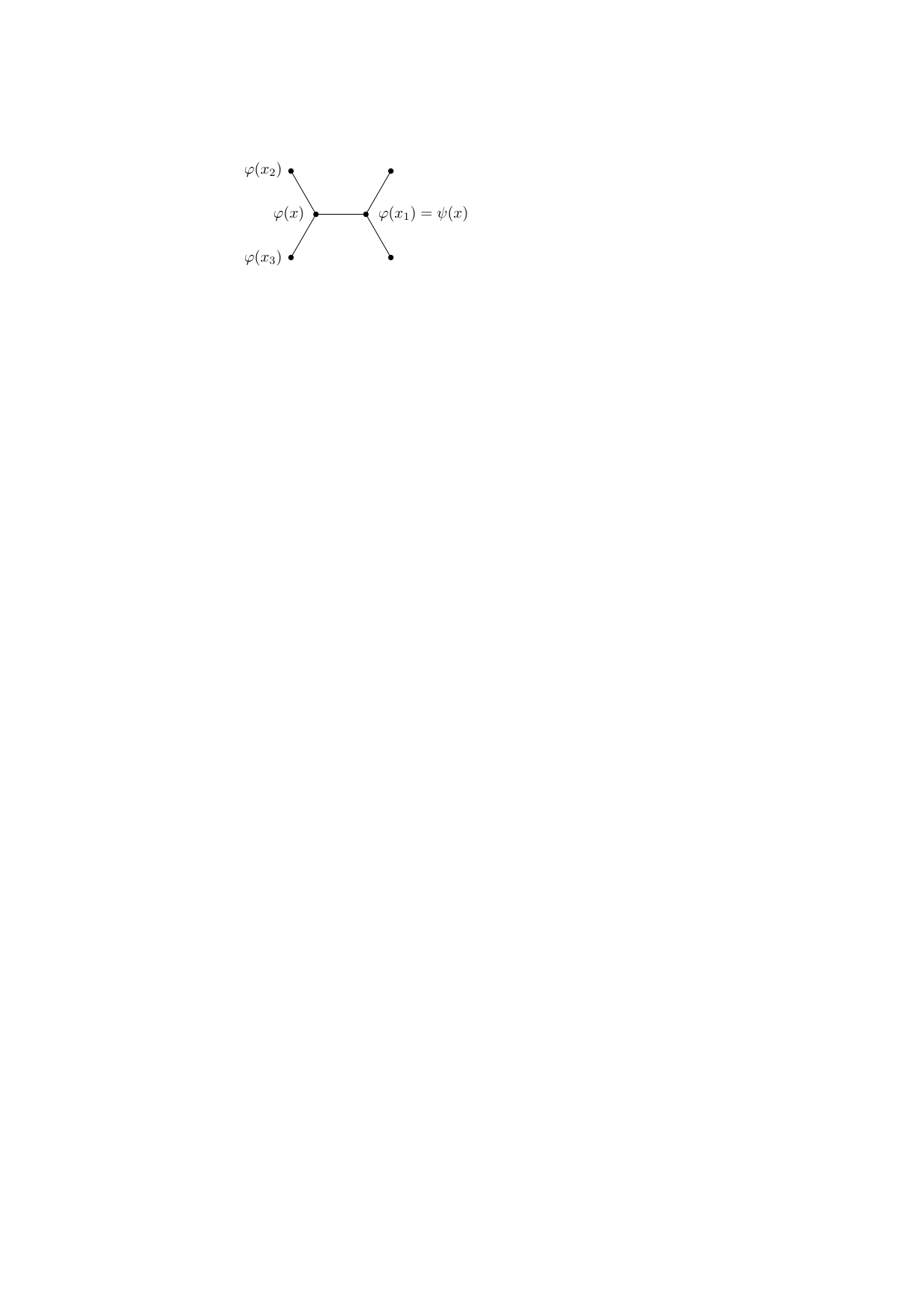}
\caption{Situation described during the proof of Fact~\ref{fact:feynman}.}
\label{fig:Feynman_diagram}
\end{figure}

\noindent Hence, by Lemma~\ref{lem:deficient_Cartesian_product}, $I$ is rigid. Let us call $\mathrm{in}\,I$ and $\mathrm{out}\,I$ the vertices of $I$ $(u_0,v)$ and $(u_1,v)$, respectively. It is clear that, together with this distinguished pair of vertices, $I$ is a $4$-indicator. Arguing as in the case $d=3$ and using Fact~\ref{fact:products}, we see that $I$ satisfies all the properties we require $S(4,g)$ to satisfy.

In the case $d=5$, we do a similar construction with the odd cycle $C_{2g+1}$ in the place of $K_2$. Let $v_0$ and $v_1$ be two vertices of $C_{2g+1}$ at distance $g$ from each other, and let  $f:V(C_{2g+1})\rightarrow\{1,2\}$ be defined by $f^{-1}(1)=\{v_0,v_1\}$. We consider the product $P=\mathbf T\mathbin{\underset{f}\square} C_{2g+1}$, and we denote by $I$ the graph obtained from $P$ by adding the extra edge $e=\{(u_1,v_0),(u_0,v_1)\}$. We claim that $I$ satisfies the hypotheses of Lemma~\ref{lem:deficient_Cartesian_product}. We check (i), (v) and (vi); the rest are trivial or can be verified as in the case $d=4$. For (i), note that a cycle of $I$ using $e$ has length at least $g+1$, so it cannot be shortest even among odd cycles of $I$. The rest of the cycles of $I$ are in $P$, so we can apply Fact~\ref{fact:products}. To check (v), consider $\varphi,\psi\in\mathrm{End}(T')$ and $u,v\in V(C_{2g+1})$. If $u=v_0$ and $v=v_1$, and $(\varphi(x),u)$ and $(\psi(x),v)$ are adjacent in $I$ for every $x\in V(T')$, then $\varphi$ maps every vertex to $u_1$, a contradiction. Otherwise, 
in the case $u=v$, we apply Fact~\ref{fact:feynman}. 
Finally, let us check (vi). If $\psi$ is an endomorphism of $C_{2g+1}$, then, in fact, $\psi\in\mathrm{Aut}(C_{2g+1})$. Assume that $\psi$ maps $\{v_0,v_1\}$ to itself. Then $\psi$ permutes $v_0$ and $v_1$; otherwise, $\psi$ is the identity. Suppose that there is some $\varphi\in\mathrm{End}(I)$ such that $\varphi(\pi_{C_{2g+1}}^{-1}(u))\subseteq\pi_{C_{2g+1}}^{-1}(\psi(u))$ for every $u\in V(C_{2g+1})$. This implies that the compositions
$$V(T)\overset{\iota_{v_0}}{\longrightarrow}\pi_{C_{2g+1}}^{-1}(v_0)\overset{\varphi}{\longrightarrow}\pi_{C_{2g+1}}^{-1}(v_1)\overset{\pi_{\mathbf T}}{\longrightarrow} V(T),$$
$$V(T)\overset{\iota_{v_1}}{\longrightarrow}\pi_{C_{2g+1}}^{-1}(v_1)\overset{\varphi}{\longrightarrow}\pi_{C_{2g+1}}^{-1}(v_0)\overset{\pi_{\mathbf T}}{\longrightarrow} V(T)$$
are in $\mathrm{End}(T)$. By Lemma~\ref{lem:deficient_tiling_factors_are_rigid}, $\pi_{\mathbf T}\circ\varphi\circ\iota_{v_0}=\pi_{\mathbf T}\circ\varphi\circ\iota_{v_1}$ is the identity. But then, $\varphi$ sends $e$ to  $\varphi(e)=\varphi(\{(u_1,v_0),(u_0,v_1)\})=\{(u_1,v_1),(u_0,v_0)\}$, which is not an edge of $I$. This contradiction shows that, indeed, (vi) holds. Therefore, by Lemma~\ref{lem:deficient_Cartesian_product}, we conclude that $I$ is rigid. It is clear that, together with the vertex pair $((u_0,v_0),(u_1,v_1))$, $I$ is a $5$-indicator. If we show that $I$ satisfies (v) of Lemma~\ref{lem:sip_product_and_categories}, the same arguments as before will be enough to see that $I$ satisfies all the properties that $S(5,g)$ has to satisfy. Let $Q$ be a path in $I$ from $(u_0,v_0)$ to $(u_1,v_1)$. If $Q$ does not use $e$, then it has length at least $g$ by Fact~\ref{fact:products}. And if $Q$ uses $e$, then it has a subpath in $P$ from $(u_0,v_0)$ to either $(u_1,v_0)$, in which case we can apply Fact~\ref{fact:products} again, or $(u_0,v_1)$, in which case it also has length at least $g$.

We now address the case $d\geq 6$. By induction, we know that there exists a $(d-3)$-indicator $S(d-3,g+2)$ of odd girth $g+2$ satisfying the hypotheses of Lemma~\ref{lem:sip_product_and_categories}. Let $D$ be a connected rigid digraph in which every vertex has total degree $d-3$ (for the existence of such digraphs, see Remark~\ref{rem:sausages}). We consider the \v s\'ip product $H=D*S(d-3,g+2)$, and we let $f:V(H)\rightarrow\{1,2\}$ be defined as $f^{-1}(1)=\{v\}$, where $v$ is an arbitrary vertex of $H$. By Lemma~\ref{lem:sip_product_and_categories} and Observation~\ref{obs:odd_girth_sip_product}, $H$ is connected, rigid, $(d-3)$-regular, and has odd girth $g+2$. The same arguments as above show that the graph $I=\mathbf T\mathbin{\underset{f}\square} H$ satisfies the hypotheses of Lemma~\ref{lem:deficient_Cartesian_product}. Hence, $I$ is rigid. Clearly, $I$ together with the distinguished vertex pair $((u_0,v),(u_1,v))$ is a $d$-indicator. Arguing as before, one sees that $I$ satisfies all the properties that $S(d,g)$ has to satisfy.
\qedblack

To prove the theorem, it is enough to take the infinite family $\mathcal F_1(d)$ of mutually rigid digraphs of constant total degree $d$ from Remark~\ref{rem:sausages}, and any $d$-indicator of odd girth $g\geq 7$ satisfying the hypotheses of Lemma~\ref{lem:sip_product_and_categories}, and to apply Lemma~\ref{lem:sip_product_and_categories}. By Observation~\ref{obs:odd_girth_sip_product}, the resulting family will consist of $d$-regular graphs of odd girth $g$.
\end{proof}

\section{Monoids from regular graphs}

This section contains the proof of Theorem~\ref{teo:babaipultr}. As in Section~4, the argument is divided into two steps: first, one proves a version of the theorem for binary relational systems (Lemma~\ref{lem:step4}), and then, one tries to find graphs with similar properties. The first step is now more involved; at its heart, it relies on Cayley graphs, but, unlike in the case of the representation of groups~\cite{HN73}, some extra refinement is needed in order to homogenize the indegrees. This is done in Section~\ref{sec:homogenization}. The second step, along with the proof of Theorem~\ref{teo:babaipultr}, can be found in Section~\ref{sec:babaipultr}. This time, much less effort is required, since all the work has already been done in Section~4.

\subsection{Representing monoids with binary relational systems}\label{sec:homogenization}

In the present section we construct a binary relational system in which each vertex has the same total degree, with a prescribed endomorphism monoid. This is achieved by a sequence of intermediate constructions; each of them has the desired endomorphism monoid, and the total degrees are progressively homogenized. 

In order to better describe this homogenization process, we introduce some notions. Given a binary relational system $D$, we say that two vertices $v,w$ are in the same \emph{endomorphism class} if there are $f,g\in\mathrm{End}(D)$ with $f(v)=w$ and $g(w)=v$, and in this case we write $v\sim w$. If $f,g$ are both automorphisms, we say that $v,w$ are in the same \emph{automorphism class}, and we write $v\approx w$. We denote by $[v]$ and $\llbracket v\rrbracket$ the endomorphism and automorphism classes of $v$, respectively. We define an order relation $\preceq$ on the set $V(D)/\!\sim$ of endomorphism classes of $V(D)$ as $[v]\preceq [w]$ whenever there is some $f\in\mathrm{End}(D)$ with $f(v)=w$. This way, we obtain a poset that we denote by $\mathscr P_D$. By a \emph{connected component} of $\mathscr P_D$ we will refer to a subset of $V(D)/\!\sim$ which induces a connected component of the comparability graph of $\mathscr P_D$, i.e.~the graph on vertex set $V(D)/\!\sim$ where two elements are adjacent if and only if they are comparable in $\mathscr P_D$. We denote $V(D)/\!\approx$ by $\mathscr A_D$.

Let $D$ and $D'$ be binary relational systems. We denote by $D\leq_{\textrm{End}} D'$ that $D$ is an induced subsystem of $D'$ and $g\mapsto g\vert_{V(D)}$ defines a bijection $\Psi:\textrm{End}(D')\rightarrow\textrm{End}(D)$. 
Let us remark a couple of basic facts.
\begin{lemma}\label{lem:leqend} If $D,D'$ are two binary relational systems with $D\leq_{\End}D'$, then
\begin{enumerate}[label=(\roman*)]
   \item $\Psi:\End(D')\rightarrow\End(D)$ (defined as above) is a monoid isomorphism;
   \item $\mathscr P_D$ is an induced subposet of $\mathscr P_{D'}$.
\end{enumerate}
\end{lemma}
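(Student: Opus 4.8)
The plan is to observe that the only content of the lemma beyond what is already built into the definition of $\leq_{\End}$ is that $\Psi$ respects composition and that the existence of an endomorphism sending one vertex of $V(D)$ to another does not depend on whether we work inside $D$ or inside $D'$.

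For (i), since $\Psi$ is a bijection by hypothesis, I would only need to check that it is a monoid homomorphism. Clearly $\Psi(\mathrm{id}_{D'})=\mathrm{id}_{D'}\vert_{V(D)}=\mathrm{id}_D$. For composition, the point to notice is that because $\Psi$ takes values in $\End(D)$, for every $h\in\End(D')$ the restriction $h\vert_{V(D)}$ is an endomorphism of $D$, so in particular $h(V(D))\subseteq V(D)$. Hence for $g,h\in\End(D')$ and $v\in V(D)$ we have $h(v)\in V(D)$, and therefore $\Psi(g\circ h)(v)=g(h(v))=g\vert_{V(D)}\bigl(h\vert_{V(D)}(v)\bigr)=(\Psi(g)\circ\Psi(h))(v)$, i.e.\ $\Psi(g\circ h)=\Psi(g)\circ\Psi(h)$. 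Thus $\Psi$ is a monoid isomorphism.

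For (ii), the heart of the matter is the elementary equivalence: for $v,w\in V(D)$ there is $f\in\End(D)$ with $f(v)=w$ if and only if there is $g\in\End(D')$ with $g(v)=w$. For ``only if'' I would use surjectivity of $\Psi$ to lift $f$ to some $g\in\End(D')$ with $\Psi(g)=f$, so that $g(v)=\Psi(g)(v)=f(v)=w$ as $v\in V(D)$; for ``if'' I would use that $\Psi(g)=g\vert_{V(D)}\in\End(D)$ and $\Psi(g)(v)=g(v)=w$ since $v,w\in V(D)$. Applying this in both directions gives $v\sim_D w\iff v\sim_{D'}w$ for $v,w\in V(D)$, so $[v]_D\mapsto[v]_{D'}$ is a well-defined injection $V(D)/\!\sim\ \to V(D')/\!\sim$; and it gives $[v]_D\preceq_D[w]_D\iff[v]_{D'}\preceq_{D'}[w]_{D'}$, so this injection is an order embedding, which is exactly the statement that $\mathscr P_D$ is an induced subposet of $\mathscr P_{D'}$.

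The argument is entirely routine, and I do not anticipate a genuine obstacle; the only step that merits a moment's attention is the composition identity in (i), where one must remember that the mere fact that $\Psi$ lands in $\End(D)$ already forces every endomorphism of $D'$ to carry $V(D)$ into itself.
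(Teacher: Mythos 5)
Your proof is correct and follows the same route as the paper's: for (i) you verify that restriction respects identity and composition (using that $h|_{V(D)}\in\End(D)$ forces $h(V(D))\subseteq V(D)$), which is exactly the paper's chain of equalities; for (ii) the paper dismisses the claim as "straightforward" from the same fact, and you simply fill in the intended details by exploiting both directions of the bijection $\Psi$ (restriction for one direction, surjectivity for the other) to establish $v\sim_D w\iff v\sim_{D'}w$ and $[v]_D\preceq_D[w]_D\iff[v]_{D'}\preceq_{D'}[w]_{D'}$ for $v,w\in V(D)$.
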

\begin{proof} (i) It is clear that $\Psi(\textrm{id}_{D'})=\textrm{id}_D$, and, using the fact that $\psi\vert_{V(D)}\in\mathrm{End}(D)$ for any $\psi\in\End(D')$, we see that $\Psi(\varphi\circ\psi)=(\varphi\circ\psi)\vert_{V(D)}=\varphi\circ\psi\vert_{V(D)}=\varphi\vert_{V(D)}\circ\psi\vert_{V(D)}=\Psi(\varphi)\circ\Psi(\psi)$ for any $\varphi,\psi\in\mathrm{End}(D')$.

(ii) This is straightforward, again, from the fact that $\psi\vert_{V(D)}\in\mathrm{End}(D)$ for any $\psi\in\End(D')$.
\end{proof}

We will proceed as follows. Given a monoid $M$, the first binary relational system in our sequence, $D_1$, will be the coloured Cayley graph of $M$ with respect to any set of generators; in other words, $D_1\coloneq\mathrm{Cay}_{\mathrm{col}}(M,C)$ with $M=\langle C\rangle$.  It is well-known that then $\mathrm{End}(D_1)\cong M$; more precisely, every endomorphism of $D_1$ corresponds to left-multiplication by an element of $M$, see for instance~\cite[Theorem 7.3.7]{Kna-19}. Such a first step is standard in this type of problems. The next binary relational systems, $D_2$, $D_3$ and $D_4$ (four steps in total), will satisfy $D_1\leq_{\mathrm{End}} D_2\leq_{\mathrm{End}}D_3\leq_{\mathrm{End}}D_4$ (in particular, $\mathrm{End}(D_4)\cong M$, by Lemma~\ref{lem:leqend}). Moreover, $D_2$ will have the property that all the vertices in any given endomorphism class have the same total degree. In $D_3$, the total degree will be constant on every connected component of $\mathscr P_{D_3}$, and in $D_4$, the total degree will be constant on all $V(D_4)$, concluding the construction. The details of this argument can be found in Lemma~\ref{lem:step4}. Lemmas~\ref{lem:step1}, \ref{lem:step2} and \ref{lem:step3} are devoted to the intermediate constructions. 

\begin{lemma}\label{lem:step1} Let $M$ be a non-group monoid and let $C\subseteq M$ be a nonempty generating set of $M$. Let $D_1=\mathrm{Cay}_{\mathrm{col}}(M,C)$ and, for each $\alpha\in\mathscr A_{D_1}$, let $k_{\alpha}$ be a positive integer. Then there exists a binary relational system $D_2$ such that
\begin{enumerate}[label=(\roman*)]
   \item $D_1\leq_{\mathrm{End}}D_2$;
   \item every connected component of $\mathscr P_{D_1}$ is a connected component of $\mathscr P_{D_2}$;
   \item there is a vertex $z\in V(D_2)\backslash V(D_1)$ of degree at least $2k_{\llbracket e\rrbracket}$ such that, for any $x,y\in V(D_2)\backslash V(D_1)\backslash\{z\}$ with $x\not\sim y$, $[x]$ and $[y]$ are incomparable and $[x]\prec [z]$;
   \item for each $v\in V(D_2)$ different from the vertex $z$ from (iii), $$\deg_{D_2}v=\begin{cases}
 \deg_{D_1}v+\sum_{w\in V(D_1)\backslash\llbracket e\rrbracket}k_{\llbracket w\rrbracket}& \text{if }v\in\llbracket e\rrbracket, \\
 \deg_{D_1}v+\sum_{\alpha\in\mathscr A_{D_1}}k_{\alpha}+\sum_{\substack{\llbracket w\rrbracket\in\mathscr A_{D_1}, \\ [w]\preceq [v]}}k_{\llbracket w\rrbracket}+k_{\llbracket v\rrbracket}& \text{if }v\in V(D_1)\backslash\llbracket e\rrbracket, \\
 |\llbracket e\rrbracket|+1 &\text{if } v\in V(D_2)\backslash V(D_1).
\end{cases}$$
\end{enumerate}
\end{lemma}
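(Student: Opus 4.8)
The plan is to build $D_2$ by attaching to $D_1$ a carefully designed "gadget" of new vertices and new colours, so that the total degrees of the old vertices get boosted by the prescribed amounts while no new endomorphisms are created. First I would introduce, for every automorphism class $\alpha\in\mathscr A_{D_1}$, a bundle of $k_\alpha$ new vertices, and for every such vertex a fresh colour used exclusively to connect it to the vertices of the class $\alpha$ in the appropriate way (one $i$-arc per member of $\alpha$, so that the new vertex "sees" all of $\alpha$ and is forced to be fixed by any endomorphism that already fixes $\alpha$ setwise). The key role of the distinguished vertex $z$ from (iii) is to serve as a common "sink" sitting strictly above all the other new vertices in $\mathscr P_{D_2}$: I would make $z$ adjacent (via yet another private colour, with $|\llbracket e\rrbracket|+1$ arcs as dictated by the degree formula) to $\llbracket e\rrbracket$ together with one more new vertex, and arrange the colours so that there is an endomorphism sending every other new vertex to $z$ but none sending $z$ anywhere else and none sending a new vertex into $V(D_1)$. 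The whole construction is driven by the explicit degree count in (iv): the first two cases of that formula are exactly "original degree plus the number of new neighbours forced on $v$", and reading the formula backwards tells me precisely how many arcs of each new colour to put where — in particular, why $\llbracket e\rrbracket$ and the non-identity classes get treated asymmetrically (only $\llbracket e\rrbracket$ is hit by the $z$-gadget, while a general class $v$ collects contributions from all of $\mathscr A_{D_1}$, from the down-set of $[v]$, and from its own class).

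With the system $D_2$ defined, the proof splits into the three verifications. For (i), $D_1\leq_{\End}D_2$: since $D_1$ is an induced subsystem by construction and the new colours do not occur among the original ones, restriction $\Psi:\End(D_2)\to\End(D_1)$ is well-defined; injectivity follows because $D_1$ is connected in the original colours and the gadget is rigidly attached (the private-colour arcs pin down where every new vertex must go once the restriction to $V(D_1)$ is known); surjectivity follows by checking that each endomorphism of $D_1$ — i.e.\ each left-multiplication $\lambda_m$ — extends to $D_2$ by sending the $\alpha$-bundle to the $\lambda_m(\alpha)$-bundle (which makes sense because $\lambda_m$ maps automorphism classes to automorphism classes) and sending $z$ to $z$. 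For (ii), I would observe that the new vertices form their own part of $\mathscr P_{D_2}$: the $\prec$-relations among new vertices are exactly "$x\prec z$" for $x$ in a non-top bundle, so no new vertex is comparable to any old vertex, and hence each connected component of $\mathscr P_{D_1}$ survives intact as a component of $\mathscr P_{D_2}$ (here I use that restriction induces an isomorphism of posets by Lemma~\ref{lem:leqend}(ii), together with the fact that no endomorphism of $D_2$ moves an old vertex to a new one or vice versa). For (iii), the incomparability of $[x],[y]$ for $x\not\sim y$ among the non-$z$ new vertices and $[x]\prec[z]$ are immediate from how the private colours are laid out, and the degree bound on $z$ is read off from the $|\llbracket e\rrbracket|+1$ count plus whatever the $z$-gadget contributes. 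Finally (iv) is just the bookkeeping of counting, for each original vertex $v$, how many new arcs of each colour were attached to it, which is exactly how the formula was reverse-engineered.

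The main obstacle I expect is getting the colour scheme precise enough that both surjectivity of $\Psi$ and the non-existence of spurious endomorphisms hold simultaneously: the bundle attached to a class $\alpha$ must be "seen" identically by every endomorphism that permutes $\alpha$ (so that $\lambda_m$ extends), yet rigid enough that an endomorphism of $D_2$ cannot, say, collapse two bundles over incomparable classes or map a bundle vertex into $V(D_1)$. The device for this is the standard one in Hedrl\'in–Pultr-style arguments — give each new vertex a colour not used elsewhere and make it incident, in that colour, to a full "signature" of old vertices that no other set of vertices reproduces — but making the signatures compatible with the prescribed degree formula (which constrains how many arcs one may use) is where the care goes. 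A secondary subtlety is the hypothesis that $M$ is a non-group monoid: this is what guarantees that $\mathscr P_{D_1}$ genuinely has a non-trivial order (there is a non-invertible generator, so some $[w]$ lies strictly below $\llbracket e\rrbracket$), which is needed for the $z$-gadget to sit consistently above the other new vertices and below nothing old — and I would flag exactly where that assumption is used.
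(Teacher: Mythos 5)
Your high-level strategy---graft a rigid coloured gadget onto $D_1$ together with a sink vertex $z$ sitting above the other new vertices in $\mathscr P_{D_2}$, then show that restriction to $V(D_1)$ is a monoid isomorphism $\End(D_2)\rightarrow\End(D_1)$---agrees with the paper's. But the concrete gadget you sketch cannot produce the degree formula (iv), which is part of what must be proved, and this is a substantive gap rather than a detail to fill in.

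You attach one bundle of $k_\alpha$ new vertices to each automorphism class $\alpha$, with each bundle vertex adjacent in a private colour to \emph{all} of $\alpha$. Such a new vertex then has degree $|\alpha|$, whereas (iv) requires every new vertex other than $z$ to have degree exactly $|\llbracket e\rrbracket|+1$; and a vertex $v\in\llbracket e\rrbracket$ picks up only $k_{\llbracket e\rrbracket}$ new arcs in your scheme, versus the $\sum_{w\in V(D_1)\backslash\llbracket e\rrbracket}k_{\llbracket w\rrbracket}$ required by (iv). Neither count agrees in general. The paper resolves this by indexing the gadgets by \emph{vertices}, not classes: each $v\in V(D_1)\backslash\llbracket e\rrbracket$ receives its own set $P_v=\{v_1,\ldots,v_{k_{\llbracket v\rrbracket}}\}$, and $v_i$ receives exactly one arc from its parent $v$ (colour $\llbracket v\rrbracket_i$) and $|\llbracket e\rrbracket|$ arcs from the unit group $\llbracket e\rrbracket$ (colour $\llbracket v\rrbracket'_i$), giving degree $|\llbracket e\rrbracket|+1$ on the nose; the colour labels are per class, but the new vertices are per vertex. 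The arcs from $\llbracket e\rrbracket$ into $v_i$ also play a different role from what you envision: they are not a rigid anchor (automorphisms do move $v_i$, to $\varphi(v)_i$), but a test of whether $\psi|_{V(D_1)}$ is an automorphism---since $v_i$ is a $\llbracket v\rrbracket'_i$-out-neighbour of $e$, if $\psi(e)\not\approx e$ the only compatible image of $v_i$ is $z$. Further, $z$'s degree is not $|\llbracket e\rrbracket|+1$ (that is the formula for the \emph{other} new vertices, and (iv) explicitly excludes $z$); $z$ absorbs $(v,z)$-arcs of many colours from large sets of old vertices, and those arcs are precisely what supplies the degree increments in the first two lines of (iv). Finally, the non-group hypothesis is needed specifically to guarantee that some $\llbracket e\rrbracket_1$-arc into $z$ exists, which is what pins $z$ as a fixed point of every $\psi\in\End(D_2)$---not merely to make $\mathscr P_{D_1}$ non-trivial.
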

\begin{proof} Note that the automorphism class $\llbracket e\rrbracket$ is the set of invertible elements of $M$, and since $M$ is finite it coincides with $[e]$. Also, the relation $\sim$ on $V(D_1)$ can be reformulated as $v\sim w$ if and only if $Mv=Mw$.

We construct $D_2$ as follows. For each $v\in V(D_1)\backslash\llbracket e\rrbracket$ we let $P_v=\{v_1,\ldots,v_{k_{\llbracket v\rrbracket}}\}$ be a set of size $k_{\llbracket v\rrbracket}$. We assume that all these sets are pairwise disjoint, and disjoint from $V(D_1)$. We further let $z$ be a new vertex, not in $V(D_1)$ nor in any $P_v$. We define 
$$V(D_2)\coloneq V(D_1)\cup\bigcup_{v\in V(D_1)\backslash\llbracket e\rrbracket}P_v\cup\{z\}.$$ 
Next, we extend the set $C$ of the colours of the arcs of $D_1$ to a set $C'$ of colours for the arcs of $D_2$. For each $\alpha\in\mathscr A_{D_1}$ and each $1\leq i\leq k_{\alpha}$ we let $\alpha_i,\alpha'_i$ be two new colours. We can assume that all these new colours are different from each other, and not already in $C$. Thus, the set of colours of $D_2$ is 
$$C'\coloneq C\cup\bigcup_{\alpha\in\mathscr A_{D_1}}\{\alpha_1,\alpha'_1,\ldots,\alpha_{k_{\alpha}},\alpha'_{k_{\alpha}}\}.$$
Finally, the arcs of $D_2$ are defined as (see Figure~\ref{fig:step1_local})
$$A_c(D_2)\coloneq\begin{cases}
A_c(D_1) &\text{if }c\in C, \\
\{(v,z)\mid v\in V(D_1),\ [w]\preceq [v]\}\cup\{(v,v_i)\mid v\in\alpha\} &\text{if }c=\alpha_i \text{ for some } \\ 
&\alpha=\llbracket w\rrbracket\in\mathscr A_{D_1}\backslash\llbracket e\rrbracket \\
& \text{and }1\leq i\leq k_{\alpha}, \\
\{(v,z)\mid v\in V(D_1),\ v\not\approx e\}\cup\{(u,v_i)\mid v\in\alpha,\ u\approx e\} &\text{if }c=\alpha'_i \text{ for some } \\&\alpha=\llbracket w\rrbracket\in\mathscr A_{D_1}\backslash\llbracket e\rrbracket \\
& \text{and }1\leq i\leq k_{\alpha}, \\
\{(v,z)\mid v\in V(D_1),\ v\not\approx e\}&\text{if $c=\llbracket e\rrbracket_i$ or $c=\llbracket e\rrbracket'_i$} \\
& \text{for some } 1\leq i\leq k_{\llbracket e\rrbracket}.
\end{cases}
$$
From the construction it is clear that $D_1$ is an induced subsystem of $D_2$.
\begin{figure}[h]
\centering
\includegraphics[width=0.3\textwidth]{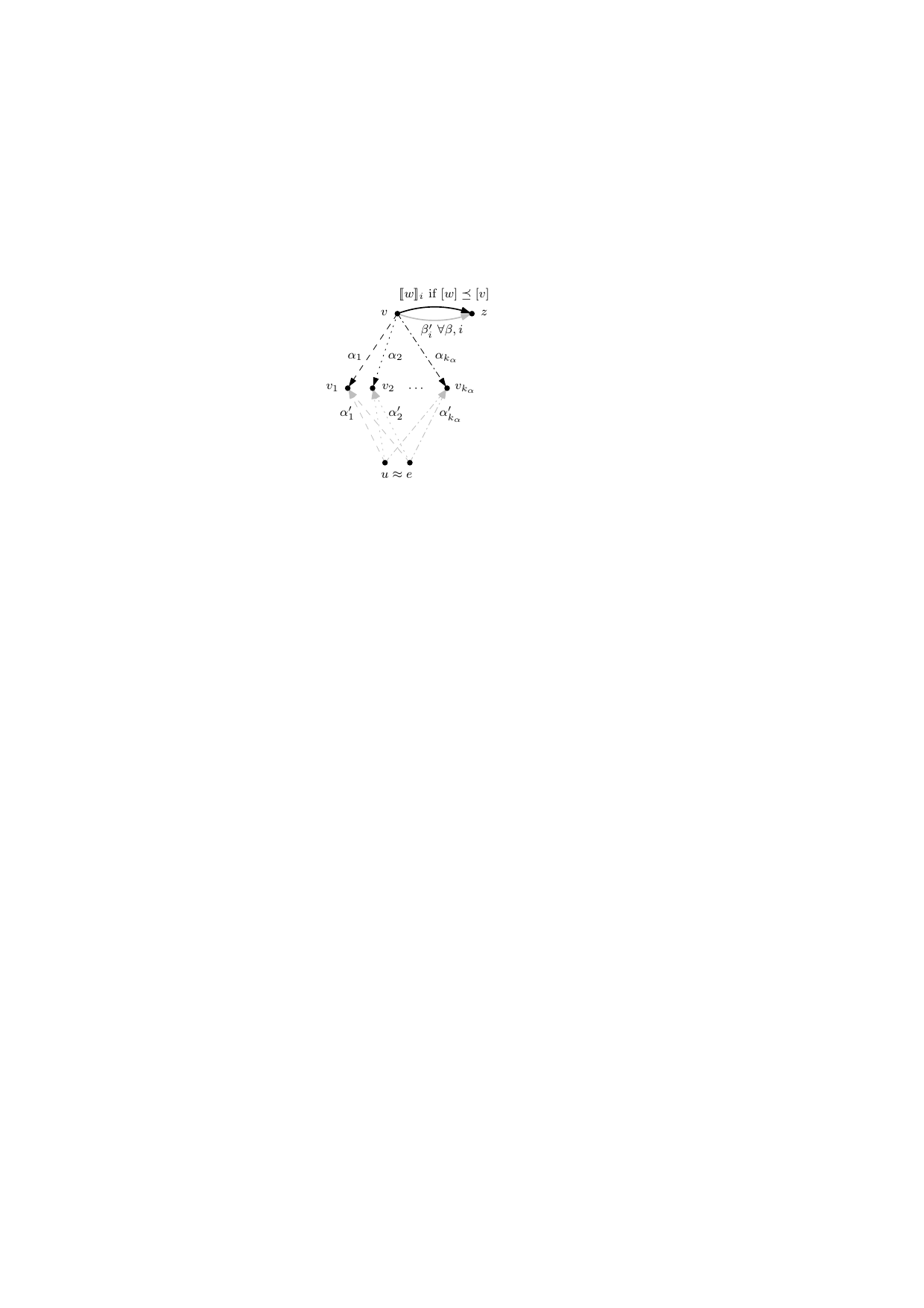}
\caption{Sketch of the neighbourhood of $\{v\}\cup P_v$ with respect to the new colours $C'\backslash C$, for a generic $v\in V(D_1)\backslash\llbracket e\rrbracket$ with automorphism class $\alpha$. The arcs of colours $\alpha_1,\ldots,\alpha_{k_{\alpha}}$ are in black and the arcs of colours $\alpha'_1,\ldots,\alpha'_{k_{\alpha}}$ are in grey. The thick black arc represents arcs of all the colours of the form $\llbracket w\rrbracket_i$ with $[e]\preceq[w]\preceq[v]$ and $1\leq i\leq k_{\llbracket w\rrbracket}$, and the thick grey arc represents arcs of all the colours of the form $\beta'_i$ with $\beta\in\mathscr A_{D_1}$ and $1\leq i\leq k_{\beta}$. Every vertex $u\in\llbracket e\rrbracket$ has the same adjacencies to $P_v$.} 
\label{fig:step1_local}
\end{figure}

Given $\varphi\in\mathrm{End}(D_1)$, let $\varphi':V(D_2)\rightarrow V(D_2)$ be the mapping defined by $$\varphi'(x)=\begin{cases}
\varphi(x) &\text{if } x\in V(D_1),\\
\varphi(v)_i &\text{if } x=v_i \text{ for some } v\in V(D_1)\backslash\llbracket e\rrbracket \text{ and } 1\leq i\leq k_{\llbracket v\rrbracket} \text{, and } \varphi\in\mathrm{Aut}(D_1), \\
z &\text{if } x=v_i \text{ for some } v\in V(D_1)\backslash\llbracket e\rrbracket \text{ and } 1\leq i\leq k_{\llbracket v\rrbracket} \text{, and } \varphi\notin\mathrm{Aut}(D_1), \\
z &\text{if } x=z.
\end{cases}$$
Note that if $\varphi\in\Aut(D_1)$ then $\varphi(v)\approx v$, so in the second case of the definition the expression $\varphi(v)_i$ makes sense. In what follows we routinely check that $\varphi'\in\mathrm{End}(D_2)$ by cases. Let $c\in C'$ be a colour and $a\in A_c(D_2)$ a $c$-arc.
\begin{itemize}
   \item[] \textit{Case 1: $c\in C$.} It is then immediate that $\varphi'(a)\in A_c(D_2)$.
   \item[] \textit{Case 2: $c=\alpha_i$ for some $\alpha=\llbracket w\rrbracket\in\mathscr A_{D_1}\backslash\llbracket e\rrbracket$ and $1\leq i\leq k_{\alpha}$.} First assume that $a$ is of the form $(v,v_i)$, where $v\in\alpha$. If $\varphi\in\mathrm{Aut}(D_1)$, then $\varphi(v)\in\alpha$, so $\varphi'(a)=(\varphi(v),\varphi(v)_i)\in A_{\alpha_i}(D_2)$. If $\varphi\notin\mathrm{Aut}(D_1)$, then $\varphi'(a)=(\varphi(v),z)$, which indeed is an $\alpha_i$-arc because $[v]\preceq[\varphi(v)]$. Now suppose that $a$ is of the form $(v,z)$, with $v\in V(D_1)$ and $[w]\preceq [v]$. Then $\varphi'(a)=(\varphi(v),z)\in A_{\alpha_i}(D_2)$ as before.
   \item[]\textit{Case 3: $c=\alpha'_i$ for some $\alpha=\llbracket w\rrbracket\in\mathscr A_{D_1}\backslash\llbracket e\rrbracket$ and $1\leq i\leq k_{\alpha}$.} First we assume that $a$ is of the form $(u,v_i)$, where $v\in\alpha$ and $u\approx e$. If $\varphi\in\mathrm{Aut}(D_1)$, then $\varphi(v)\in\alpha$ and $\varphi(u)\approx e$, so $\varphi'(a)=(\varphi(u),\varphi(v)_i)\in A_{\alpha'_i}(D_2)$. If $\varphi\notin\mathrm{Aut}(D_1)$, then $\varphi(u)$ is not an invertible element of $M$, i.e.~$\varphi(u)\not\approx e$, so $\varphi'(a)=(\varphi(u),z)\in A_{\alpha'_i}(D_2)$. Now suppose that $a$ is of the form $(v,z)$, with $v\in V(D_1)$ and $v\not\approx e$. Then $\varphi(v)\not\approx e$, so $\varphi'(a)=(\varphi(v),z)\in A_{\alpha'_i}(D_2)$.
   \item[]\textit{Case 4: $c=\llbracket e\rrbracket_i$ or $c=\llbracket e\rrbracket'_i$ for some $1\leq i\leq k_{\llbracket e\rrbracket}$.} Then $a=(v,z)$ with $v$ a non-invertible element of $M$. Since $\varphi(v)$ is not invertible either, $\varphi(a)=(\varphi(v),z)\in A_c(D_2)$.
\end{itemize}

Let us see now that the mapping $\Phi:\End(D_1)\rightarrow\End(D_2)$ defined by $\Phi(\varphi)=\varphi'$ is a bijection. Clearly, $\Phi$ is injective. Let $\psi\in\End(D_2)$. Since $C\neq\emptyset$, every vertex in $V(D_1)$ is the origin of an arc of a colour in $C$, so $\psi(V(D_1))\subseteq V(D_1)$. Moreover, it is clear that $\psi\vert_{V(D_1)}\in\End(D_1)$. Consider a vertex $v_i$, with $v\in V(D_1)\backslash\llbracket e\rrbracket$ and $1\leq i\leq k_{\llbracket v\rrbracket}$. Since $v_i$ is a $\llbracket v\rrbracket_i$-out-neighbour of $v$, $\psi(v_i)$ must be either $\psi(v)_i$ (if it exists) or $z$, the only two possible $\llbracket v\rrbracket_i$-out-neighbours of $\psi(v)$. But $v_i$ is a $\llbracket v\rrbracket'_i$-out-neighbour of $e$. Therefore, if $\psi\vert_{V(D_1)}\in\Aut(D_1)$, then $\psi(e)\approx e$, and the only option is $\psi(v_i)=\psi(v)_i$. If, conversely, $\psi\vert_{V(D_1)}\notin\Aut(D_1)$, then $\psi(e)\not\approx e$, and the only option is $\psi(v_i)=z$. Finally, concerning the special vertex $z$, note that $\psi$ sends $z$ to itself because $z$ is the only possible end of $\llbracket e\rrbracket_1$-arcs. Indeed, $\llbracket e\rrbracket_1$-arcs do exist due to $M$ not being a group, and thus having some element $v\in M\backslash\llbracket e\rrbracket$. We conclude that $\psi=(\psi\vert_{V(D_1)})'$.

Thus, we have seen that $D_1\leq_{\End}D_2$. To prove (ii) and (iii), we use the fact that every endomorphism of $D_2$ is of the form $\varphi'$ for some endomorphism $\varphi$ of $D_1$. 
And the formula in (vi) for the total degree of the vertices follows easily from the construction.

Finally, note that the fact that $M$ is not a group is only required to ensure that $\deg_{D_2}z\geq 2k_{\llbracket e\rrbracket}$ and that $z$ is fixed by all endomorphisms of $D_2$. We could get rid of this exception by adding loops of new colours at $z$, but, of course, when $M$ is a group the whole construction is pointless.
\end{proof}

Given a binary relational system $D_2$ such that the total degree is constant on each endomorphism class, we now want to construct a binary relational system $D_3$ such that $D_2\leq_{\End} D_3$ and the total degree is constant on each connected component of $\mathscr P_{D_3}$. We will later achieve this by recursively applying the next lemma: in the first application, $D$ will be set to be $D_2$, and the last $D'$ obtained will be $D_3$.

An \emph{ideal} of a poset $(P,\leq)$ is a subset $I\subseteq P$ with the property that $x\leq y\in I$ implies $x\in I$.

\begin{lemma}\label{lem:step2} Let $D$ be a binary relational system such that $\deg_D v=\deg_D w$ for every two vertices $v,w$ with $v\sim w$, let $I$ be an ideal of $\mathscr P_D$ such that $\deg_D v=\deg_D w$ for every two vertices $v,w$ with $[v],[w]\in I$, and let $k$ be a positive integer. Then, there exists a binary relational system $D'$ such that
\begin{enumerate}[label=(\roman*)]
   \item $D\leq_{\End} D'$;
   \item every connected component of $\mathscr P_D$ is a connected component of $\mathscr P_{D'}$;
   \item $\deg_{D'}x=\deg_{D'}y$ for every two vertices $x,y\in V(D')\backslash V(D)$;
   \item for each $v\in V(D)$, $$\deg_{D'}v=\begin{cases}
   \deg_D v+2k &\text{if }[v]\in I \\
   \deg_D v+k+1 &\text{otherwise.}\end{cases}$$
\end{enumerate}
\end{lemma}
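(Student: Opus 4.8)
The plan is to build $D'$ from $D$ by attaching, for every endomorphism class $\beta$ lying \emph{outside} the ideal $I$, a small ``pendant'' gadget that raises the total degree of the vertices of $\beta$ by an amount that exactly compensates for the fact that they are not in $I$; simultaneously, we raise the degrees of the vertices in $I$ by a uniform $2k$. Concretely, for each $v\in V(D)$ with $[v]\notin I$ I would introduce $k$ fresh vertices $v_1,\ldots,v_k$ (pairwise disjoint, disjoint from $V(D)$), together with a batch of new arc-colours indexed by the automorphism classes of $D$ (so that, exactly as in Lemma~\ref{lem:step1}, automorphisms of $D$ can be lifted to permute the $v_i$ while non-automorphic endomorphisms are forced to collapse them). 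The arcs would be arranged so that: each $v_i$ receives one arc from $v$ (contributing $+k$ to $\deg v$); each $v_i$ receives one further arc coming from a ``master'' vertex or from the whole class structure, so that $\deg_{D'}v_i$ is some fixed constant independent of $\beta$; and the vertices with $[v]\in I$ get $+2k$ arcs pointing into the gadgets of the classes immediately above them in $\mathscr P_D$. The key numerical bookkeeping is that $\deg_D v$ is constant on each $\sim$-class (hypothesis) and constant on all of $I$ (hypothesis), so after the augmentation the total degree becomes constant on $I\cup\{\text{one more layer}\}$, while outside $I$ it is shifted to $\deg_D v+k+1$; one then iterates the lemma, feeding back the enlarged ideal, until the whole poset is exhausted.

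The steps, in order, would be: (1) fix notation for the classes of $D$ outside $I$ and for the new vertices $v_1,\dots,v_k$ and new colours; (2) write down $V(D')$ and the arc-sets $A_c(D')$ colour by colour, with a figure analogous to Figure~\ref{fig:step1_local}; (3) for each $\varphi\in\mathrm{End}(D)$ define the lift $\varphi'$ (acting as $\varphi$ on $V(D)$, permuting the $v_i$ when $\varphi\in\mathrm{Aut}(D)$ and sending all $v_i$ to a fixed sink when $\varphi\notin\mathrm{Aut}(D)$), and verify case-by-case over the colour classes that $\varphi'\in\mathrm{End}(D')$; (4) show $\Phi\colon\varphi\mapsto\varphi'$ is a bijection onto $\mathrm{End}(D')$: injectivity is clear, and for surjectivity note that because every vertex of $V(D)$ is the source of an arc of an \emph{old} colour, any $\psi\in\mathrm{End}(D')$ restricts to an endomorphism of $D$, and then the colour constraints on the out-neighbours of $v$ and of the ``master'' vertex force $\psi(v_i)$ to be exactly what the definition of $(\psi\restrict{V(D)})'$ prescribes; this gives $D\leq_{\End}D'$, hence (i). (5) Deduce (ii) from the fact that every endomorphism of $D'$ is a lift, so no new comparabilities between old classes are created and the gadget classes attach only ``above'' classes outside $I$, which does not merge components of $\mathscr P_D$; (6) read off (iii) and (iv) directly from the arc-count in step (2), checking that the constant in (iii) does not depend on which class outside $I$ we are in and that the $+2k$ versus $+k+1$ split comes out as claimed.

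The main obstacle I expect is step (2)/(6): designing the arcs so that \emph{three} things hold at once — the degree increment is $+2k$ on $I$, the increment is $+k+1$ (a \emph{specific} value, not just ``some constant'') off $I$, and the new vertices all get one common total degree regardless of which gadget they sit in — while not accidentally creating homomorphisms that would either enlarge $\mathrm{End}(D')$ or introduce unwanted comparabilities in $\mathscr P_{D'}$. This is exactly the kind of delicate simultaneous bookkeeping that Lemma~\ref{lem:step1} already performs, so I would model the colour scheme closely on that proof: use colours $\alpha_i$ to route arcs ``upward'' in the poset into the sink-type vertex and colours $\alpha'_i$ to route arcs from the $I$-layer, so that the rigidity of the construction (only automorphisms can avoid collapsing the $v_i$) is inherited, and then the degree formula in (iv) becomes a finite, if fiddly, counting exercise. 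The secondary subtlety is making sure the ``one more arc into each $v_i$'' used to equalise the $v_i$-degrees is itself compatible with the lifts $\varphi'$ — I would arrange it to emanate either from a single fixed vertex (which all endomorphisms must fix, as with $z$ in Lemma~\ref{lem:step1}) or uniformly from an entire automorphism class, so that the verification in step (3) goes through unchanged.
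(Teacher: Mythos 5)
Your high-level intent is right (attach pendant gadgets, lift endomorphisms vertex-locally, read off degrees), but the concrete architecture you sketch is reversed and has a gap that I do not think you can patch without essentially rediscovering the paper's construction.

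First, the direction of gadget attachment. You propose to attach a $k$-element gadget $\{v_1,\dots,v_k\}$ to each $v$ with $[v]\notin I$, and then to give the $I$-vertices their $+2k$ by having them ``point into the gadgets of the classes immediately above them.'' The paper does the opposite and more locally: it attaches a set $P_v$ of \emph{$k$} new vertices to each $v$ with $[v]\in I$ and a \emph{single} new vertex to each $v$ with $[v]\notin I$. All new arcs stay inside $\{v\}\cup P_v$. Concretely, the new colours are $0,1,\dots,k,p$: each colour $i\in\{0,1,\dots,k\}$ contributes one out-arc from $v$ into $P_v$ (for $[v]\in I$ to the vertex $v_i$ when $i\ge 1$ and to all $k$ of them via colour $0$; for $[v]\notin I$ every such colour lands on the unique $v_1$), and colour $p$ carries the arcs $(v_i,v_j)$ for $i\le j$, i.e.\ a reflexive transitive tournament on $P_v$. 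This immediately gives the increments $2k$ on $I$ and $k+1$ off $I$, and it makes every new vertex have total degree $k+3$ regardless of $v$, which is exactly item (iii). Your count, by contrast, gives only $+k$ (not $+k+1$) for $[v]\notin I$, so it is off by one before you even get to the harder part.

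Second, the mechanism ``$I$-vertices get $2k$ arcs into gadgets of the classes immediately above'' does not work. An endomorphism class $[v]\in I$ can be maximal in $\mathscr P_D$, or all its covers can again lie in $I$ (remember $I$ is only required to be a down-set, not an antichain boundary); in either case there is no gadget above $[v]$ to point into, and your degree budget collapses. Even when such gadgets exist, arcs from $v$ into $P_w$ for $[w]\succ[v]$ are non-local: to lift $\varphi\in\End(D)$ you would have to match the covers of $[v]$ bijectively with the covers of $[\varphi(v)]$, which is not in general possible when $\varphi$ is not an automorphism. The paper avoids all of this because the only new arcs incident to $V(D)$ go from $v$ into its own $P_v$, so the lift $\varphi'$ simply sends $v_i\mapsto\varphi(v)_i$ when $[\varphi(v)]\in I$ and $v_i\mapsto\varphi(v)_1$ otherwise; the transitive-tournament structure under colour $p$ absorbs the collapse.

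Third, you model the rigidity mechanism on Lemma~\ref{lem:step1}, with $\alpha_i/\alpha'_i$ colours indexed by automorphism classes and a sink $z$ that collects the collapsed $v_i$. None of that is needed here, and in fact your description ``automorphisms of $D$ can be lifted to permute the $v_i$'' misreads even Lemma~\ref{lem:step1}: there too the index $i$ is preserved ($v_i\mapsto\varphi(v)_i$), it is only the base vertex $v$ that moves. In Lemma~\ref{lem:step2} the set $P_v$ is made totally rigid by the colour-$p$ arcs $(v_i,v_j)$, $i\le j$, so surjectivity of $\varphi\mapsto\varphi'$ falls out: any $\psi\in\End(D')$ sends $V(D)$ into $V(D)$ (every $v\in V(D)$ emits a $0$-arc), and the colours $1,\dots,k$ then force $\psi(v_i)$ to be $\psi(v)_i$ or $\psi(v)_1$ according to whether $[\psi(v)]\in I$. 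No master vertex, and no extra colour families indexed by automorphism classes.

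In short: flip which side of the ideal gets the large gadget, keep all new arcs inside $\{v\}\cup P_v$, replace the Lemma~\ref{lem:step1}-style sink/colour apparatus by the much simpler colour set $\{0,1,\dots,k,p\}$ with a reflexive transitive tournament on each $P_v$, and then (iii) and (iv) become a genuinely short count rather than the ``fiddly bookkeeping'' you anticipate.
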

\begin{proof} For each $v\in V(D)$, we create a set of new vertices $P_v$ of size $k$ or $1$, depending on whether $[v]\in I$ or not. More precisely, 
$$P_v=\begin{cases}
\{v_1,\ldots,v_k\} &\text{if }[v]\in I\\
\{v_1\} &\text{otherwise,}\end{cases}$$
and we suppose that all these sets are disjoint from each other, and disjoint from $V(D)$. The set of vertices of $D'$ is defined as
$$V(D')\coloneq V(D)\cup\bigcup_{v\in V(D)}P_v.$$
The set of colours $C'$ for the arcs of $D'$ is defined as
$$C'\coloneq C\cup\{0,1,\ldots,k\}\cup\{p\},$$
where $C$ is the set of colours of the arcs of $D$, and $0,1,\ldots,k,p$ are new colours (the union is assumed to be disjoint). Finally, the arcs of $D'$ are defined as (see Figure~\ref{fig:step2_local})
$$A_c(D')\coloneq\begin{cases}
A_c(D) &\text{if } c\in C \\
\{(v,v_i)\mid v\in V(D),\ 1\leq i\leq |P_v|\} &\text{if } c=0 \\
\{(v,v_i)\mid v\in V(D),\ [v]\in I\} \cup \{(v,v_1)\mid v\in V(D),\ [v]\notin I\} &\text{if } c=i\in\{1,\ldots,k\} \\
\{(v_i,v_j)\mid v\in V(D),\ 1\leq i\leq j\leq |P_v|\} &\text{if } c=p. \\
\end{cases}$$
From the construction it is clear that $D$ is an induced subsystem of $D'$.
\begin{figure}[h]
\centering
\includegraphics[width=0.65\textwidth]{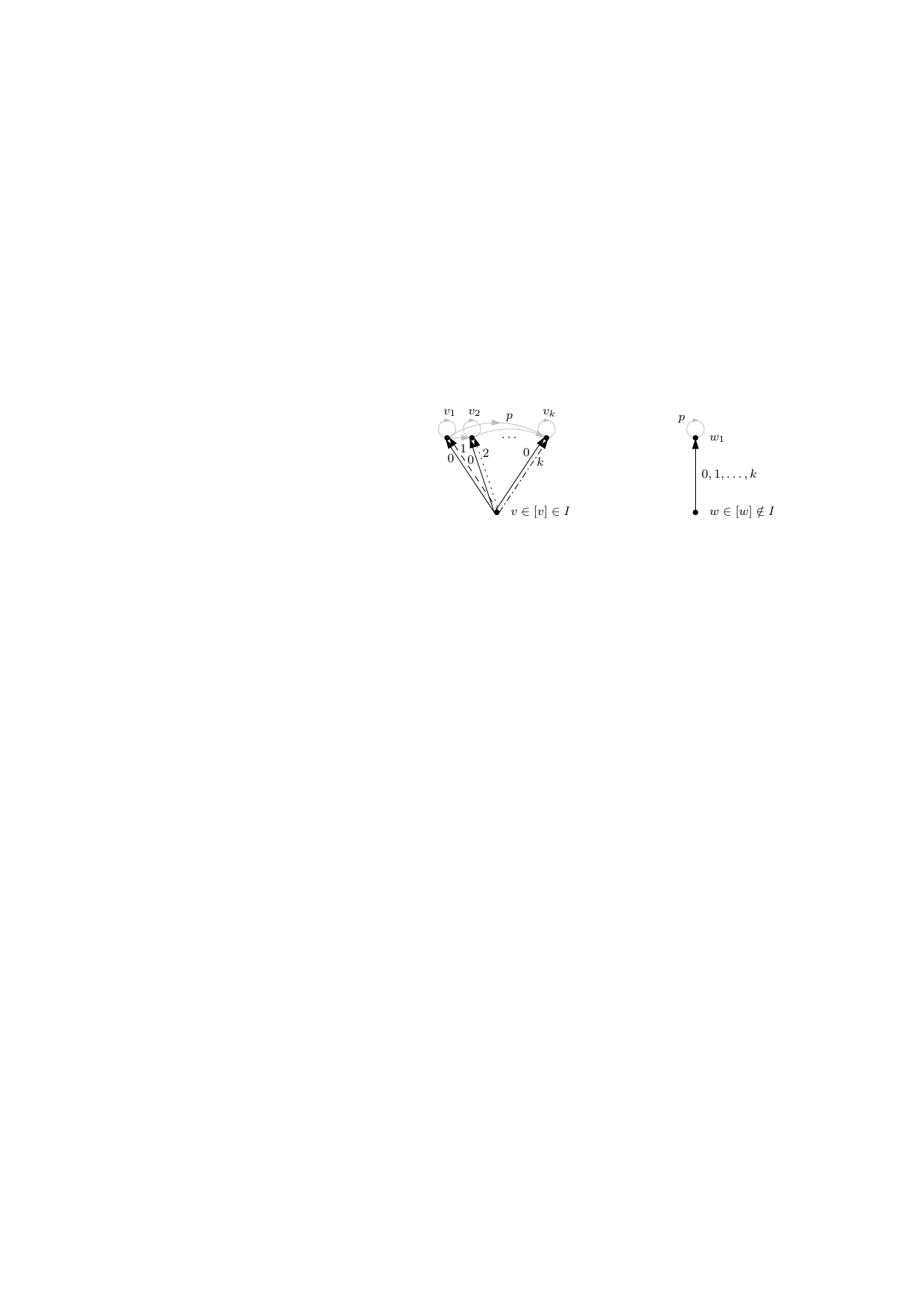}
\caption{Sketch of the neighbourhood, with respect to the new colours $C'\backslash C$, of a vertex $v\in V(D)$ with $[v]\in I$ and a vertex $w\in V(D)$ with $[w]\notin I$. The arcs of colours $0,1,\ldots,k$ are in black and the arcs of colour $p$ are in grey. The thick black arc represents arcs of all the colours $0,1,\ldots,k$.} 
\label{fig:step2_local}
\end{figure}

Given $\varphi\in\End(D)$, let $\varphi':V(D')\rightarrow V(D')$ be the mapping defined by
$$\varphi'(x)=\begin{cases}
\varphi(x) &\text{if } x\in V(D) \\
\varphi(v)_i &\text{if } x=v_i \text{ for some $v\in V(D)$ with $[\varphi(v)]\in I$} \\
\varphi(v)_1 &\text{if } x=v_i \text{ for some $v\in V(D)$ with $[\varphi(v)]\notin I$.}
\end{cases}$$
Note that $\varphi'$ is well-defined: in the second case of the definition, when $x=v_i$, $\varphi(v)_i$ is indeed a vertex of $D'$ because $[\varphi(v)]\in I$. Let us check that $\varphi'\in\End(D')$. Consider a colour $c\in C'$ and a $c$-arc $a\in A_c(D')$.
\begin{itemize}
   \item[] \textit{Case 1: $c\in C$.} It is then immediate that $\varphi'(a)\in A_c(D')$.
   \item[] \textit{Case 2: $c=0$.} Let us write $a=(v,v_i)$. If $[\varphi(v)]\in I$, then $\varphi'(a)=(\varphi(v),\varphi(v)_i)\in A_c(D')$, and if $[\varphi(v)]\notin I$ then $\varphi'(a)=(\varphi(v),\varphi(v)_1)\in A_c(D')$.
   \item[] \textit{Case 3: $c\in\{1,\ldots,k\}$.} First assume that $a=(v,v_c)$, where $[v]\in I$. Then, $[\varphi(v)]\in I$ implies that $\varphi'(a)=(\varphi(v),\varphi(v)_c)\in A_c(D')$, and $[\varphi(v)]\notin I$ implies that $\varphi'(a)=(\varphi(v),\varphi(v)_1)\in A_c(D')$. Now assume that $a=(v,v_1)$, where $[v]\notin I$. Then, $[v]\preceq [\varphi(v)]$, and $[\varphi(v)]\notin I$ because $I$ is an ideal. Therefore $\varphi'(a)=(\varphi(v),\varphi(v)_1)\in A_c(D')$.
   \item[] \textit{Case 4: $c=p$.} Let $a=(v_i,v_j)$ with $v\in V(D)$ and $1\leq i\leq j\leq |P_v|$. If $[\varphi(v)]\in I$ then $\varphi'(a)=(\varphi(v)_i,\varphi(v)_j)\in A_c(D')$, and if $[\varphi(v)]\notin I$ then $\varphi'(a)=(\varphi(v)_1,\varphi(v)_1)\in A_c(D')$.
\end{itemize}

Let us see now that the mapping $\Phi:\End(D)\rightarrow\End(D')$ defined by $\Phi(\varphi)=\varphi'$ is a bijection. Clearly, $\Phi$ is injective. Let $\psi\in\End(D')$. All the vertices in $V(D)$ are the origin of a $0$-arc, so $\psi(V(D))\subseteq V(D)$. Moreover, it is clear that $\psi\vert_{V(D)}\in\End(D)$, and, thanks to the colours $\{1,\ldots,k\}\subseteq C'$, the images by $\psi$ of the vertices in $V(D')\backslash V(D)$ are determined by $\psi\vert_{V(D)}$. Therefore, $\psi=(\psi\vert_{V(D)})'$, as we wanted to see. 

This shows that $D\leq_{\End} D'$. From the description of the endomorphisms of $D'$, it follows that any $\pi\in\mathscr P_D$ and $\rho\in\mathscr P_{D'}\backslash\mathscr P_D$ are incomparable, showing (ii). Lastly, (iii) and (iv) can be easily checked.
\end{proof}

In the last step, we start with a binary relational system $D_3$ where the total degree is constant in each connected component of $\mathscr P_{D_3}$, and we build a binary relational system $D_4$ with $D_3\leq_{\End} D_4$ where the total degree is constant. This can be achieved by  taking products with an arc of multiple colours, component by component, in a suitable way.

\begin{lemma}\label{lem:step3} Let $D_3$ be a binary relational system such that, for every two vertices $v,w$ such that $[v]$ and $[w]$ belong to the same connected component of $\mathscr P_{D_3}$, $v$ and $w$ have the same total degree. 
Then, there exists a binary relational system $D_4$ such that
$D_3\leq_{\End} D_4$, and the total degree of every vertex of $D_4$ is the same.
\end{lemma}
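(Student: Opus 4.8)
The plan is to realise $D_4$ as a single product-type construction, in the spirit of the hint: take the Cartesian product of $D_3$ with a two-vertex ``arc'' carrying many colours, but let the colour set on the arc vary from one connected component of $\mathscr P_{D_3}$ to the next. Concretely, let $K_1,\dots,K_m$ be the connected components of $\mathscr P_{D_3}$, let $W_i\subseteq V(D_3)$ be the set of vertices whose endomorphism class lies in $K_i$ (so the $W_i$ partition $V(D_3)$), and let $d_i$ be the common total degree of the vertices of $W_i$, which is well defined by hypothesis. Put $d:=1+\max_i d_i$ and $\delta_i:=d-d_i\ge 1$. I would then define $D_4$ on vertex set $V(D_3)\times\{0,1\}$, with colour set $C\cup\{c^i_j\mid i\in[m],\,1\le j\le\delta_i\}$ (disjoint union, where $C$ is the colour set of $D_3$): for $c\in C$ the $c$-arcs are $\{((u,a),(v,a))\mid (u,v)\in A_c(D_3),\,a\in\{0,1\}\}$, i.e.\ two disjoint copies of $D_3$; and for each $i,j$ the $c^i_j$-arcs are the ``rungs'' $\{((v,0),(v,1))\mid v\in W_i\}$. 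A direct count then shows that a vertex $(v,a)$ with $v\in W_i$ has total degree $d_i+\delta_i=d$, so every vertex of $D_4$ has total degree $d$.

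The structural fact I would isolate first is an observation valid in any binary relational system: if $\psi\in\End(D_3)$ then $[v]\preceq[\psi(v)]$ for every $v$ (witnessed by $\psi$ itself), hence $v$ and $\psi(v)$ lie in the same connected component of $\mathscr P_{D_3}$; equivalently, every endomorphism of $D_3$ maps each $W_i$ into itself. This is exactly what makes the diagonal lift well behaved: for $\psi\in\End(D_3)$ set $\hat\psi(v,a):=(\psi(v),a)$. The two copies of $D_3$ are respected because $\psi$ is an endomorphism, and a rung $((v,0),(v,1))$ with $v\in W_i$ is sent to $((\psi(v),0),(\psi(v),1))$, which is again a $c^i_j$-rung precisely because $\psi(v)\in W_i$. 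Hence $\psi\mapsto\hat\psi$ gives a monoid embedding $\End(D_3)\hookrightarrow\End(D_4)$.

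For the reverse direction, and thus for $D_3\le_{\End}D_4$, I would show that every $\varphi\in\End(D_4)$ equals some $\hat\psi$. Since $\delta_i\ge 1$ for all $i$, the set of sources of new-colour arcs is exactly $V(D_3)\times\{0\}$ and the set of their targets is exactly $V(D_3)\times\{1\}$, and these two sets are disjoint; more precisely the sources of $c^i_j$-arcs form $W_i\times\{0\}$ and the targets form $W_i\times\{1\}$. As $\varphi$ preserves both ``being a source of a $c$-arc'' and ``being a target of a $c$-arc'' for each colour $c$, it maps $V(D_3)\times\{0\}$ and $V(D_3)\times\{1\}$ into themselves. Restricted to each layer $\varphi$ is then an endomorphism of the corresponding copy of $D_3$; the rungs (one per component at least) force the layer-$0$ and layer-$1$ endomorphisms to coincide, say to a common $\psi\in\End(D_3)$, so $\varphi=\hat\psi$. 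Finally, identifying $D_3$ with the induced subsystem $D_4[V(D_3)\times\{0\}]$ — legitimate because all new-colour arcs leave that set, so it really is an induced copy of $D_3$ — the restriction map $\varphi\mapsto\varphi|_{V(D_3)\times\{0\}}$ is a two-sided inverse of $\psi\mapsto\hat\psi$, yielding the required bijection $\End(D_4)\to\End(D_3)$, hence $D_3\le_{\End}D_4$.

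The degree count and the ``induced subsystem'' check are routine; the only step needing a little care is the layer-preservation argument, which is the reason the construction is set up so that \emph{every} component is ``activated'' by at least one new colour (that is, $\delta_i\ge 1$, obtained by taking $d$ strictly larger than all the $d_i$). The genuine content, small as it is, is the remark that endomorphisms never leave a component of $\mathscr P_{D_3}$; without it the diagonal lift of a non-automorphism endomorphism could fail to send rungs to rungs, and the embedding $\End(D_3)\hookrightarrow\End(D_4)$ would break down.
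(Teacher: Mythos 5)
Your proof is correct and follows essentially the same strategy as the paper: double the vertex set, connect each vertex to its clone by ``rung'' arcs of fresh colours in just the right quantity to equalize the total degrees, and use the observation that any endomorphism of $D_3$ keeps every $v$ in the connected component of $[v]$ in $\mathscr P_{D_3}$ to show that the endomorphism monoid is preserved. The only (cosmetic) difference is the indexing of the rung colours: the paper uses degree thresholds $\delta,\delta+1,\ldots,\Delta$, making a rung of colour $c$ present at $v$ exactly when $\deg_{D_3}v\le c$ (which, by the hypothesis, depends only on the component of $[v]$), whereas you index the rungs directly by the component; both choices yield the same proof.
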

\begin{proof} For each $v\in V(D_3)$, let $v'$ be a new vertex. The vertex set of $D_4$ is $V(D_4)\coloneq V(D_3)\cup\{v'\mid v\in V(D_3)\}.$ Let $\Delta$ and $\delta$ be the maximum and the minimum total degree of $D_3$, respectively. The set of colours of the arcs of $D_4$ is 
$C'\coloneq C\cup\{\delta,\delta+1,\ldots,\Delta\},$
where the union is supposed to be disjoint. The arcs of $D_4$ are 
$$A_c(D_4)\coloneq\begin{cases}
\{(v,w),(v',w')\mid (v,w)\in A_c(D_3)\}&\text{if $c\in C$} \\
\{(v,v')\mid v\in V(D_3),\ \deg_{D_3} v\leq c\} &\text{if } c\in\{\delta,\delta+1,\ldots,\Delta\}.
\end{cases}$$
Every vertex has total degree $\Delta+1$ in $D_4$, and $D_3$ is an induced subsystem of $D_4$. Consider the mapping $\Phi:\End(D_3)\rightarrow\End(D_4)$ defined by $\Phi(\varphi)=\varphi'$, where 
$$\varphi'(x)=\begin{cases}
\varphi(x) &\text{if }x\in V(D_3) \\
\varphi(v)' &\text{if $x=v'$ for a $v\in V(D_3)$.}
\end{cases}$$
For $v\in V(D_3)$ and $\varphi\in\End(D_3)$, $[v]$ and $[\varphi(v)]$ belong to the same connected component of $\mathscr P_{D_3}$, so $\deg_{D_3} v=\deg_{D_3} \varphi(v)$. Thus $\varphi'$ preserves the arcs of colours $\delta,\delta+1,\ldots,\Delta$, and this implies that $\Phi$ is well-defined. Let us see that it is surjective. Given $\psi\in\End(D_4)$, since the vertices in $V(D_3)$ are precisely the origins of the $\Delta$-arcs, $\psi(V(D_3))\subseteq V(D_3)$. Moreover, $\psi\vert_{V(D_3)}\in\End(D_3)$, and, again because of the $\Delta$-arcs, the images by $\psi$ of the vertices in $V(D_4)\backslash V(D_3)$ are determined by $\psi\vert_{V(D_3)}$. That is, $\psi=(\psi\vert_{V(D_3)})'$. Clearly, $\Phi$ is also injective, so $D_3\leq_{\End} D_4$.
\end{proof}

We say that a binary relational system $D$ has the \emph{increasing degree property} if $\deg_D x\leq\deg_D y$ for any $x,y\in V(D)$ with $[x]\preceq [y]$. We are now ready to prove the main result of this section. 

\begin{lemma}\label{lem:step4} Every monoid $M$ is isomorphic to the endomorphism monoid of a binary relational system in which the total degree of every vertex is the same.
\end{lemma}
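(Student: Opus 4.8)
The plan is to realise the pipeline announced before Lemma~\ref{lem:step1}: construct binary relational systems with $D_1\le_{\End}D_2\le_{\End}D_3\le_{\End}D_4$ and $\End(D_1)\cong M$, homogenising the total degrees one granularity at a time, so that $D_4$ is the system we want and $\End(D_4)\cong M$ by Lemma~\ref{lem:leqend}(i). It is convenient to first dispose of the case that $M$ is a group: then $D_1:=\Cay_{\mathrm{col}}(M,C)$ for any generating set $C$ already does the job, since every endomorphism of $D_1$ is a left translation $\lambda_m$, so $\End(D_1)\cong M$, and when $M$ is a group all of these are automorphisms, making $D_1$ vertex-transitive and hence of constant total degree. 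So from now on assume $M$ is not a group; then $M$ is nontrivial, we fix a nonempty generating set $C$, and put $D_1:=\Cay_{\mathrm{col}}(M,C)$, so $\End(D_1)\cong M$.

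The first real step is to feed $D_1$ into Lemma~\ref{lem:step1}. Here I would record that $\deg_{D_1}$ is constant on each automorphism class of $D_1$: the automorphisms of $D_1$ are exactly the left translations $\lambda_m$ by units $m$ of $M$, and $\lambda_m$ and $\lambda_{m^{-1}}$ restrict to mutually inverse, hence degree-preserving, bijections between the in- and out-neighbourhoods of $v$ and of $mv$. Since the numbers $k_\alpha$ in Lemma~\ref{lem:step1} are indexed by automorphism classes, I can choose them large and so that $\deg_{D_1}v+k_{\llbracket v\rrbracket}$ does not depend on $v$; plugging this into the explicit formula of Lemma~\ref{lem:step1}(iv) shows that the resulting system $D_1\le_{\End}D_2$ has total degree constant on every endomorphism class and, in fact, has the increasing degree property (the bottom class $\llbracket e\rrbracket$, the added vertices, and the special vertex $z$ at the top being checked directly against the formula).

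Next I would turn $D_2$ into $D_3$ by iterating Lemma~\ref{lem:step2}. The increasing degree property is exactly the invariant that makes this work: at any stage the set $I$ of vertices of currently minimum total degree is a down-set, hence an ideal of the poset of endomorphism classes on which the total degree is trivially constant, so Lemma~\ref{lem:step2} applies with this $I$; choosing the parameter $k$ equal to the gap between the two smallest occurring degree values merges those two levels, and the inequality one needs to keep the increasing degree property (namely $k\le\deg_D w-\deg_D v+1$ whenever $[v]\in I$ and $[v]\prec[w]$) is precisely the one guaranteed by this choice. After finitely many steps every connected component of the comparability poset has constant total degree; call the outcome $D_3$, so $D_2\le_{\End}D_3$. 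A single application of Lemma~\ref{lem:step3} to $D_3$ then yields $D_3\le_{\End}D_4$ with constant total degree throughout, and $\End(D_4)\cong\End(D_1)\cong M$ by Lemma~\ref{lem:leqend}(i), completing the proof.

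The step I expect to be most delicate is the termination of the $D_2\to D_3$ iteration: one must argue that repeatedly lifting the minimum-degree ideal does eventually make every component internally constant, keeping in mind that each application of Lemma~\ref{lem:step2} also spawns a fresh batch of vertices — all of equal degree, in their own new components — and that a component's bottom degree level may have to be pushed through several of the global degree values before it catches up with the component's next level. This forces one to coordinate the choices of the $k_\alpha$ (for Lemma~\ref{lem:step1}) and of $k$ (at each use of Lemma~\ref{lem:step2}); beyond that, everything is a routine check against the degree formulas already established in Lemmas~\ref{lem:step1}--\ref{lem:step3} and against Lemma~\ref{lem:leqend}.
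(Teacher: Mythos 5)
Your pipeline is exactly the one used in the paper: start with $D_1=\Cay_{\mathrm{col}}(M,C)$, dispose of the group case, apply Lemma~\ref{lem:step1} to get $D_2$ with the increasing degree property and constant degree on each endomorphism class, iterate Lemma~\ref{lem:step2} to get $D_3$, and finish with Lemma~\ref{lem:step3} and Lemma~\ref{lem:leqend}. Your treatment of the step $D_1\le_{\End}D_2$ (degree constant on automorphism classes via left translations by units, choosing $k_\alpha$ so that $\deg_{D_1}v+k_{\llbracket v\rrbracket}$ is constant) is sound.

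However, the iteration $D_2\to D_3$ as you describe it does not terminate, and this is a genuine gap, not just a delicacy to be smoothed over. You take $I$ to be the set of endomorphism classes of \emph{globally} minimum total degree. But each application of Lemma~\ref{lem:step2} spawns a fresh component of new vertices of degree $k+3$, and this value can easily be the new global minimum. When $I$ consists solely of vertices lying in already-constant components, applying Lemma~\ref{lem:step2} lifts $I$ but shifts every non-constant component uniformly by $k+1$, so no two levels inside a non-constant component ever merge. Concretely, start with a single non-constant component $K_0$ at degree levels $\{10,15,20\}$: one application (with $k=6$) gives $K_0$ at $\{22,27\}$ and a new constant component at degree $9$; from then on the global minimum is always a fresh batch of new vertices, $K_0$'s two levels are translated in lockstep forever, and the process never halts. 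There is also a small off-by-one: taking $k$ equal to the gap leaves a residual gap of $1$; one needs $k=\text{gap}+1$ to merge two levels (this is the extremal value allowed by the increasing-degree constraint $k\le\deg w-\deg v+1$).

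The paper sidesteps the termination problem by working \emph{one component at a time}. It fixes a non-constant component $K_0$, takes $I$ to be the minimum-degree classes \emph{within $K_0$} (still an ideal of the whole poset, since $K_0$ is a component and by the increasing degree property), chooses a class $\pi\in K_0\setminus I$ with $I\cup\{\pi\}$ an ideal and $\deg\pi$ minimal, and sets $k=\deg\pi-\deg I+1$. Then the bottom level of $K_0$ strictly grows at each step (it absorbs $\pi$), previously constant components stay constant, and the new-vertex components are born constant, so after at most $|K_0|$ iterations $K_0$ is homogenised; repeating over the finitely many components of $\mathscr P_{D_2}$ gives $D_3$. You should replace the global-minimum choice of $I$ by this per-component one (and fix the $k$ value) to close the gap; the rest of your argument then goes through.
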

\begin{proof} Let $C$ be a nonempty generating set of $M$, and let $D_1\coloneq\Cay_{\mathrm{col}}(M,C)$. As discussed after Lemma~\ref{lem:leqend}, $\End(D_1)\cong M$. In particular, if $M$ is a group, we are already done. If $M$ is not a group, using Lemma~\ref{lem:step1} we obtain a binary relational system $D_2$ such that $D_1\leq_{\End} D_2$. By (ii) of Lemma~\ref{lem:step1}, each endomorphism class of $D_1$ is an endomorphism class of $D_2$. By (iii) and (iv) (and recalling that $[e]=\llbracket e\rrbracket$), the integers $k_{\alpha}$ can be chosen so that in $D_2$ the total degree is constant on each endomorphism class (i.e.~on each element of $\mathscr P_{D_2}$).  In fact, if the $k_{\alpha}$'s are chosen carefully, it can be ensured that 
$D_2$ has the increasing degree property. A natural way to achieve this is would be to choose each $k_{\llbracket v\rrbracket}$ in accordance with each other $k_{\alpha}$ with $\alpha\in[v]$, and only once $k_{\llbracket w\rrbracket}$ has been chosen for all $[e]\neq [w]\prec [v]$, leaving the choice of $k_{\llbracket e\rrbracket}$ until last.

Now we will construct a sequence $D_2=D_2^0\leq_{\End}D_2^1\leq_{\End}\ldots\leq_{\End}D_2^t=D_3$ of binary relational systems preserving the increasing degree property that are increasingly regular (in a sense that will be made precise later), until the point that $\deg_{D_3} x=\deg_{D_3} y$ for any $x,y\in V(D_3)$ such that $[x]$ and $[y]$ belong to the same connected component of $\mathscr P_{D_3}$. Let $i\geq 0$ and assume that we already have such $D^0_2,D_2^1,\ldots,D_2^i$, but that we cannot take $D_3$ to be $D_2^i$. By the increasing degree property, we can use the notation $\deg_{D_2^i}[x]\coloneq\deg_{D_2^i} x$. Let $K_0\subseteq\mathscr P_{D_2^i}$ be a connected component where the total degree is not constant, and let  $I\subseteq K_0$ be the set of endomorphism classes $\rho$ minimizing $\deg_{D_2^i}\rho$ on $K_0$. Denote by $\deg_{D_2^i}I$ this minimum. By the increasing degree property, $I$ is an ideal. Let $\pi\in K_0\backslash I$ such that $I\cup\{\pi\}$ is an ideal, minimizing $\deg_{D_2^i}\pi$. We construct $D_2^{i+1}$ from $D_2^i$ by applying Lemma~\ref{lem:step2} with $I$ and $k=\deg_{D_2^i}\pi-\deg_{D_2^i} I+1$. Observe that
\begin{enumerate}[label=(\alph*)]
   \item $D_2^{i+1}$ has the increasing degree property;
   \item every connected component of $\mathscr P_{D_2^i}$ is a connected component of $\mathscr P_{D_2^{i+1}}$;
   \item if $\deg_{D_2^i}$ is constant on $\cup_{\alpha\in K}\,\alpha$ for  a connected component $K$ of $\mathscr P_{D_2^i}$, then $\deg_{D_2^{i+1}}$ is also constant on $\cup_{\alpha\in K}\,\alpha$;
   \item $\deg_{D_2^{i+1}}$ is constant on $\cup_{\alpha\in K}\,\alpha$ for each connected component $K$ of $\mathscr P_{D_2^{i+1}}\backslash\mathscr P_{D_2^i}$; 
   \item the set of endomorphism classes $\rho$ minimizing $\deg_{D_2^{i+1}}\rho$ on $K_0$ includes $I\cup\{\pi\}$.   
\end{enumerate}
By applying the same argument sufficiently many times, say $j$, in the resulting binary relational system $D_2^{i+j}$ the total degree will be constant on the connected component $K_0$ of $\mathscr P_{D_2^{i+j}}$, while remaining constant on the connected components where $\deg_{D_2^i}$ was already constant. Now, we can repeat this whole process for every connected component of $\mathscr P_{D_2}$ in which the total degree is not constant, and at the end we will obtain the desired $D_3$.

Finally, we apply Lemma~\ref{lem:step3} to $D_3$, and we obtain a binary relational system $D_4$ such that $\End(D_4)\cong M$ and the total degree of every vertex is the same. 
\end{proof}

\subsection{Representing monoids with regular graphs}\label{sec:babaipultr}

\babaipultr*
\begin{proof} By Lemma~\ref{lem:step4}, there is a binary relational system $D^*$ with $\End(D^*)\cong M$, of minimum indegree and outdegree at least $1$ (if necessary, add loops of a new colour to every vertex), such that every vertex has the same total degree $d\geq 3$. Now, note that (any finite subset of) the infinite family $\mathcal F_2(d)$ from Remark~\ref{rem:sausages}, viewed as a tuple of oriented $d$-indicators, satisfies the hypotheses of Lemma~\ref{lem:sip_product_and_categories}. Therefore, using the second version of the \v s\'ip product (the one with symbol $\vec *$), we can replace the coloured arcs of $D^*$ by appropriate gadgets and obtain a digraph $D$ with $\End(D)\cong M$, of minimum indegree and outdegree at least $1$, and such that every vertex of $D$ has total degree $d$. Let $S(d,g)$ be the $d$-indicator of odd girth $g\geq 7$ obtained with Claim~\ref{claim:strong_induction}. By applying Lemma~\ref{lem:sip_product_and_categories} again, we see that the graph $G\coloneq D*S(d,g)$ is regular of degree $d$, has odd girth $g$ (see Observation~\ref{obs:odd_girth_sip_product}), and satisfies $\End(G)\cong M$. We can obtain more instances of $G$ by using larger gadgets.
\end{proof}

\section{Discussion}

Theorem~\ref{thm:regular_rigid_family} says that there are many $d$-regular rigid graphs for every $d\geq 3$. Another notion of \emph{many} corresponds to an intriguing suspicion raised by Petrov in the comments to~\cite{vdZ19}: are random $d$-regular graphs rigid with probability almost $1$? It is known~\cite{Bollobas82} that asymptotically almost all $d$-regular graphs on $n$ vertices are asymmetric ($d\geq 3$).

Theorem~\ref{thm:regular_rigid_family} implies that for any $d\geq 3$ there exists a $d$-regular rigid graph of girth $\geq 4$. 
Do rigid $d$-regular graphs of girth at least $g$ exist for any pair $(d,g)$? 
Note that, in the case of asymmetric graphs, Bollob\'as' result~\cite{Bollobas82} implies an affirmative answer (and, in the rigid case, a positive answer to the question in the previous paragraph would suffice, see \cite[Theorem 9.5]{JLR2000}).
If yes, what is the smallest order of such a graph? 
We know the precise answer only for all $(d,3)$ thanks to Theorem~\ref{thm:smallrigid} and~\cite{BI69}. Combining computation with some results from the literature on cages~\cite{EJ13}, we compiled  Table~\ref{table}.

\setlength{\tabcolsep}{4.2pt} 
\begin{table}[htbp]
    \centering
{\footnotesize
    \begin{tabular}{c|ccc|ccc|ccc|ccc|ccc}
        $d$ & ~ & $3$ & ~ & ~ & $4$ & ~ & ~ & $5$ & ~ & ~ & $6$ & ~ & ~ & $7$ & ~ \\ 
        \hline
        $g$ & cage & asym & rigid & cage & asym & rigid & cage & asym & rigid & cage & asym & rigid & cage & asym & rigid \\ \hline
        3 & 4 & 12 & 14 & 5 & 10 & 10 & 6 & 10 & 10 & 7 & 11 & 11 & 8 & 12 & 12 \\ \hline
        4 & 6 & 14 & 14 & 8 & 13 & 13 & 10 & 16 & 16 & 12 & 18 & 19 & 14 & 20 & $\leq 24$ \\ \hline
        5 & 10 & 16 & 16 & 19 & 22 & 22 & 30 & 32 & 32 & 40 & $\geq41$ &  & 50 & $\geq52$ &  \\ \hline
        6 & 14 & 22 & 22 & 26 & 34 & $\geq36$ & 42 &  &  & 62 &  &  & 90 &  &  \\ \hline
        7 & 24 & 28 & 28 & 67 & &  &  &  &  &  &  &  &  &  &  \\ \hline
        8 & 30 & 40 &  &  &  &  &  &  &  &  &  &  &  &  &  \\ \hline
        9 & 58 & 58 &  &  &  &  &  &  &  &  &  &  &  &  &  \\    \end{tabular}
    }
    
    \caption{Orders of cages and smallest regular rigid and asymmetric graphs.}
    \label{table}
\end{table}

It follows from a result of Roberson~\cite{Rob19} that strongly regular graphs are \emph{core-complete}, i.e., all the endomorphisms of a strongly regular graph are automorphisms or it has an endomorphism whose image is a complete graph. In particular, strongly regular graphs are not able to represent all monoids. It is open whether more generally all distance regular graphs are core-complete~\cite[Section 16.4]{vKT16}. To the contrary it could be true that distance regular graphs are even able to represent all monoids. We do not know.

As mentioned in the introduction, the class of (completely regular) monoids cannot be represented by graphs of bounded degree~\cite{BP80,knauer2023endomorphismuniversalitysparsegraph}. However, semilattices, as well as $k$-cancellative monoids, can be represented by graphs of bounded degree~\cite{knauer2023endomorphismuniversalitysparsegraph}. Can $k$-cancellative monoids be represented by $d$-regular graphs, where $d$ depends on $k$? Note that for $k=1$ the answer is $d=3$, due to the result of Hell and Ne\v{s}et\v{r}il~\cite{HN73}. More generally, is every endomorphism monoid of a graph of maximum degree at most $k$ isomorphic to the endomorphism monoid of a $d$-regular graph, for some $d$ depending on $k$?

\paragraph{Acknowledgments:} GPS is grateful to Guillem Perarnau and \'Erika Rold\'an for their advice. KK was partially supported through the Severo Ochoa and Mar\'ia de Maeztu Program for Centers and Units of Excellence in R\&D (CEX2020-001084-M) and grants PID2022-137283NB-C22 and ANR-21-CE48-0012. The two later grants also supported GPS. 
\small

\bibliography{lit}
\bibliographystyle{my-siam}

\end{document}